\newtheorem{lemma}[equation]{Lemma}
\newtheorem{theorem}[equation]{Theorem}
\newtheorem{proposition}[equation]{Proposition}
\newtheorem{corollary}[equation]{Corollary}
\newtheorem{theoremp}{Theorem}
\newtheorem{corollary*}[theoremp]{Corollary}
\newtheorem{proposition*}[theoremp]{Proposition}
\theoremstyle{definition}
\newtheorem{definition}[equation]{Definition}
\newtheorem*{remark*}{Remark}
\newtheorem{remark}[equation]{Remark}
\newtheorem*{example}{Examples}
\newtheorem*{properties}{General properties of Hessenberg varieties}
\numberwithin{equation}{section}
\DeclareMathOperator{\Hess}{Hess}
\DeclareMathOperator{\pt}{pt}
\DeclareMathOperator{\codim}{codim}
\newcommand{\hooklongrightarrow}{\lhook\joinrel\longrightarrow}
\title{Perverse sheaves and the cohomology of regular Hessenberg varieties}
\author[Ana B\u{a}libanu]{Ana B\u{a}libanu}
\address[Ana B\u{a}libanu]{Department of Mathematics, Harvard University, 1 Oxford Street, Cambridge, MA  02138, USA}
\email{ana@math.harvard.edu}
\author[Peter Crooks]{Peter Crooks}
\address[Peter Crooks]{Department of Mathematics, Northeastern University, 360 Huntington Avenue, Boston, MA  02115, USA}
\email{p.crooks@northeastern.edu}
\date{}\date{}
\begin{document}
\maketitle

\begin{abstract}
We use the Springer correspondence to give a partial characterization of the irreducible representations which appear in the Tymoczko dot action of the Weyl group on the cohomology ring of a regular semisimple Hessenberg variety. In type $A$, we apply these techniques to prove that all irreducible summands which appear in the pushforward of the constant sheaf on the universal Hessenberg family have full support. 

We also observe that the recent results of Brosnan and Chow, which apply the local invariant cycle theorem to the family of regular Hessenberg varieties in type $A$, extend to arbitrary Lie type. We use this extension to prove that regular Hessenberg varieties, though not necessarily smooth, always have the ``K\"ahler package.''
\end{abstract}

\setcounter{tocdepth}{1}
\tableofcontents

\section*{Introduction}
Let $G$ be a connected, simply-connected, semisimple complex algebraic group with Lie algebra $\mathfrak{g}$. Fix a Borel subgroup $B$ with Lie algebra $\mathfrak{b}$, and a $B$-stable subspace $H$ of $\mathfrak{g}$ which contains $\mathfrak{b}$. The Hessenberg variety associated to an element $x$ of $\mathfrak{g}$ is
\[\Hess(x,H)=\left\{gB\in G/B\mid g^{-1}\cdot x\in H\right\}.\]
This variety is the fiber above $x$ of a Poisson moment map 
\begin{align*}
\mu_H: G\times^B H&\longrightarrow\mathfrak{g} \\
			[g:y]&\longmapsto g\cdot y.
			\end{align*} 
In this way, the family of Hessenberg varieties is a generalization of the Grothendieck--Springer simultaneous resolution, which corresponds to the case $H=\mathfrak{b}$.

The study of Hessenberg varieties lies at the intersection of algebraic geometry, representation theory, and combinatorics. Examples of these varieties first appeared in applications to numerical analysis due to De Mari and Shayman \cite{dem.sha:88}. They were then defined in full generality by De Mari, Procesi, and Shayman in \cite{dem.pro.sha:92}, who described the geometry of Hessenberg varieties corresponding to regular semisimple elements. In this work the authors showed that such varieties are smooth, and that one particular case is the toric variety whose fan is given by the Weyl chambers. In the same period, a singular Hessenberg variety known as the Peterson variety was introduced by Peterson in unpublished work. It came to play a central role in the study of the quantum cohomology rings of flag varieties in work of Kostant \cite{kos:96} and Rietsch \cite{rie:01, rie:03}. More recently, Goresky, Kottwitz, and MacPherson \cite{gor.kot.mac:06} have shown that affine Springer fibers admit pavings by affine bundles over generalized Hessenberg varieties.

The topology of Hessenberg varieties in type $A$ has been studied in detail by Tymoczko. In \cite{tym:08} she observed that regular semisimple Hessenberg varieties are GKM varieties, and she showed that their singular cohomology rings carry an action of the symmetric group called the Tymoczko dot action. Subsequently, Shareshian and Wachs \cite{sha.wac:16} conjectured a relationship between the character of the dot action in type $A$ and a generalization of the chromatic symmetric functions introduced by Stanley \cite{sta:95}. In particular, this conjecture gives an explicit decomposition of Tymoczko's action into a sum of irreducible representations of the symmetric group, building on earlier work of Gasharov \cite{gas:96}. The Shareshian--Wachs conjecture was recently proved by Brosnan and Chow \cite{bro.cho:18} by applying tools from the formalism of derived categories. The key insight of their approach is that, on the regular semisimple locus of the Hessenberg family, the dot action is induced by monodromy. 

In general Lie type, the dot action becomes an action of the Weyl group $W$. The problem of determining its character is still open. Prompted by the approach of Brosnan and Chow, we give the following result in this direction. Recall that the classical Springer correspondence assigns to each irreducible representation $\psi$ of $W$ a pair $(\mathcal{O}_\psi,\mathcal{L}_\psi)$ of a nilpotent orbit $\mathcal{O}_\psi$ and an irreducible local system $\mathcal{L}_\psi$ on $\mathcal{O}_\psi$. 

\begin{theoremp}
\label{first}
Let $H^\perp\subset\mathfrak{g}$ be the annihilator of $H$ with respect to the Killing form. Suppose that $\psi$ is an irreducible representation of $W$ which appears as a subrepresentation of the action of $W$ on the cohomology of a regular semisimple Hessenberg variety associated to $H$. Then the intersection $\mathcal{O}_\psi\cap H^\perp$ is nonempty.
\end{theoremp}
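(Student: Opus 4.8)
The plan is to realize $\bigoplus_k H^k(\Hess(x,H))$, for $x$ regular semisimple, as the monodromy of a local‑system summand of $R(\mu_H)_*\mathbb{Q}$, and then to apply the Fourier--Deligne transform $\mathbb{T}$ on $\mathfrak{g}$ (identifying $\mathfrak{g}\cong\mathfrak{g}^*$ via the Killing form) in order to transport the question to the family $\mu_{H^\perp}\colon G\times_B H^\perp\to\mathfrak{g}$, whose image lies in the nilpotent cone $\mathcal{N}$ since $H^\perp$ sits inside the nilradical $\mathfrak{n}=\mathfrak{b}^\perp$. For the first part, I would write $\mathfrak{g}^{\mathrm{rs}}\subseteq\mathfrak{g}$ for the regular semisimple locus and $\mu_H^{\mathrm{rs}}$ for the restriction of $\mu_H$ over it; by \cite{dem.pro.sha:92} the fibers are smooth and equidimensional, so $\mu_H^{\mathrm{rs}}$ is a smooth proper morphism and $R(\mu_H^{\mathrm{rs}})_*\mathbb{Q}$ is a direct sum of shifted local systems. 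Using the identification of the Tymoczko dot‑action with monodromy in arbitrary Lie type (established earlier in the paper, following \cite{bro.cho:18}), the $W$‑representation $\bigoplus_k H^k(\Hess(x,H))$ is the monodromy representation of $\bigoplus_k R^k(\mu_H^{\mathrm{rs}})_*\mathbb{Q}$, with $W$ acting through $\pi_1(\mathfrak{g}^{\mathrm{rs}})\twoheadrightarrow W$. So if $\psi$ occurs there, the irreducible local system $\mathcal{K}_\psi$ on $\mathfrak{g}^{\mathrm{rs}}$ attached to $\psi$ is a summand of $R(\mu_H^{\mathrm{rs}})_*\mathbb{Q}$.

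Next I would apply the decomposition theorem to the proper, surjective map $\mu_H$ (surjective because already the Grothendieck--Springer map $G\times_B\mathfrak{b}\to\mathfrak{g}$ is) and restrict the result to $\mathfrak{g}^{\mathrm{rs}}$: any direct summand whose support meets the connected smooth variety $\mathfrak{g}^{\mathrm{rs}}$ restricts there to a shifted local system, which forces that support to be all of $\mathfrak{g}$. Comparing with the previous paragraph and using Krull--Schmidt, $IC(\mathfrak{g},\mathcal{K}_\psi)$ is a direct summand of $R(\mu_H)_*\mathbb{Q}$. Now observe that $G\times_B H$ is the subbundle of the trivial bundle $\mathfrak{g}\times G/B$ over $G/B$ whose annihilator subbundle is $G\times_B H^\perp$, and that $\mu_H$ and $\mu_{H^\perp}$ are the restrictions to these subbundles of the projection onto $\mathfrak{g}$. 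Since $\mathbb{T}$ carries the constant sheaf on a subbundle to the constant sheaf on the annihilator subbundle and commutes with pushforward along the base, one gets $\mathbb{T}(R(\mu_H)_*\mathbb{Q})\cong R(\mu_{H^\perp})_*\mathbb{Q}$ up to shift and twist; as $\mathbb{T}$ is an equivalence preserving semisimple complexes, $\mathbb{T}(IC(\mathfrak{g},\mathcal{K}_\psi))$ is a direct summand of $R(\mu_{H^\perp})_*\mathbb{Q}$. Finally, the Fourier transform realizes the Springer correspondence: it carries the Grothendieck--Springer sheaf to the Springer sheaf of $G\times_B\mathfrak{n}\to\mathcal{N}$ compatibly with the $W$‑actions, so $\mathbb{T}(IC(\mathfrak{g},\mathcal{K}_\psi))\cong IC(\overline{\mathcal{O}_\psi},\mathcal{L}_\psi)$ up to shift.

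To conclude, $IC(\overline{\mathcal{O}_\psi},\mathcal{L}_\psi)$ being a summand of $R(\mu_{H^\perp})_*\mathbb{Q}$ forces $\overline{\mathcal{O}_\psi}$ to lie in the support of $R(\mu_{H^\perp})_*\mathbb{Q}$, hence in the image of the proper map $\mu_{H^\perp}$, which is the closed $G$‑stable cone $G\cdot H^\perp\subseteq\mathcal{N}$. Then $\mathcal{O}_\psi\subseteq\overline{\mathcal{O}_\psi}\subseteq G\cdot H^\perp$, and writing a chosen $e\in\mathcal{O}_\psi$ as $g\cdot h$ with $g\in G$ and $h\in H^\perp$, the element $h=g^{-1}\cdot e$ lies in $\mathcal{O}_\psi\cap H^\perp$, so the intersection is nonempty.

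The hard part will be the $W$‑equivariant Fourier identification in the second paragraph, and specifically reconciling the normalizations of the Tymoczko dot‑action, of the Springer correspondence, and of the $W$‑equivariance of $\mathbb{T}$, so that the Springer datum appearing is $(\mathcal{O}_\psi,\mathcal{L}_\psi)$ itself rather than the one attached to $\psi\otimes\mathrm{sgn}$ --- the classical statements carry various sign twists that need to be tracked carefully. A prerequisite is also the general‑Lie‑type version of ``dot‑action $=$ monodromy'' invoked in the first paragraph.
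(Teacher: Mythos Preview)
Your proposal is correct and follows essentially the same route as the paper: identify the dot-action with monodromy so that $\mathcal{L}_\psi$ occurs in $R(\mu_H^{\mathrm{rs}})_*\underline{\mathbb{C}}$, lift to a summand $IC_{\mathfrak{g}^{\mathrm{rs}}}(\mathcal{L}_\psi)$ of the full pushforward via the decomposition theorem, apply the Fourier transform to pass to $\mu_{H^\perp*}\underline{\mathbb{C}}_{G\times_B H^\perp}$, and read off the support constraint from the Springer correspondence. Your worry about sign normalizations is handled in the paper by \emph{defining} the Springer correspondence through the Fourier transform (so $\mathfrak{F}(IC_{\mathfrak{g}^{\mathrm{rs}}}(\mathcal{L}_\psi))=IC_{\mathcal{O}_\psi}(\mathcal{M}_\psi)$ holds by construction), which makes the compatibility automatic.
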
  

In type $A$, the Springer correspondence identifies irreducible representations of the symmetric group $S_n$ with nilpotent adjoint orbits in $\mathfrak{sl}_n$. Both of these sets are indexed by partitions of the positive integer $n$. In this case, Theorem \ref{first} has the following more concrete statement, which can be deduced from the results of Brosnan and Chow and appears to be known to experts.

\begin{corollary*}
Suppose that $G=SL_n$, and let $\lambda$ be a partition of $n$ corresponding to an irreducible representation $\psi_{\lambda}$ of $S_n$. Suppose that $\psi_{\lambda}$ appears as a subrepresentation of the action of $W$ on the cohomology of a regular semisimple Hessenberg variety associated to $H$. Then the annihilator $H^\perp$ contains a nilpotent element whose Jordan normal form is given by the conjugate partition $\lambda'$.
\end{corollary*}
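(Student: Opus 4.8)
The plan is to obtain the Corollary as an immediate consequence of Theorem~\ref{first}, after translating the Springer correspondence into combinatorial data in type $A$. First I would recall the shape of the Springer correspondence for $G = SL_n$: here $W = S_n$, its irreducible representations are indexed by partitions $[\lambda]$ of $n$, and the nilpotent adjoint orbits in $\mathfrak{sl}_n$ are likewise indexed by partitions of $n$ via Jordan normal form. A special feature of type $A$ is that the local systems $\mathcal{L}_\psi$ appearing in the Springer correspondence are all trivial; hence the correspondence reduces to a bijection between partitions of $n$, namely the one sending $[\lambda]$ to the orbit $\mathcal{O}_{\psi_{[\lambda]}}$ of nilpotent elements whose Jordan type is a fixed normalization of $[\lambda]$.

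Next I would pin down that normalization. With the standard conventions --- the trivial representation of $S_n$ matched with the zero nilpotent orbit and the sign representation with the regular nilpotent orbit --- one has that $\mathcal{O}_{\psi_{[\lambda]}}$ consists of the nilpotent elements of $\mathfrak{sl}_n$ with Jordan blocks of sizes $[\lambda]$, possibly after replacing $[\lambda]$ by its transpose depending on whether one twists the correspondence by the sign character. I would fix this convention to agree with the statement of the Corollary; this bookkeeping is the only point requiring care, and it is entirely classical.

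With this in hand, the proof is a direct appeal to Theorem~\ref{first}. If $\psi_{[\lambda]}$ occurs as a subrepresentation of the dot-action on the cohomology of a regular semisimple Hessenberg variety with Hessenberg space $H$, then Theorem~\ref{first} yields $\mathcal{O}_{\psi_{[\lambda]}} \cap H^\perp \neq \varnothing$, where $H^\perp$ is the annihilator of $H$ with respect to the Killing form. Choosing any element of this intersection produces a nilpotent element of $H^\perp$ whose Jordan normal form corresponds to $[\lambda]$, which is exactly the assertion.

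I do not expect a serious obstacle: the Corollary is Theorem~\ref{first} rephrased through the type-$A$ Springer correspondence, so the main thing to get right is the transpose/sign convention in the parametrization of $S_n$-representations by partitions. One further small point worth noting is that $H^\perp$ is taken with respect to the Killing form rather than the trace form; on $\mathfrak{sl}_n$ these differ only by a nonzero scalar, so the two annihilators coincide and the translation is harmless.
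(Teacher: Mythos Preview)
Your proposal is correct and matches the paper's approach exactly: the paper states the Corollary as an immediate consequence of Theorem~\ref{first} together with the type-$A$ description of the Springer correspondence (Remark~\ref{type A springer}), without writing out any further argument. Your additional remarks on the transpose convention and the Killing-versus-trace form are more careful than the paper itself, but they do not change the substance of the proof.
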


Using similar reasoning, in type $A$ we also prove a support theorem for the Hessenberg family $\mu_H:G\times^BH\longrightarrow\mathfrak{g}$. We are motivated by the recent work of Chen, Vilonen, and Xue \cite{che.vil.xue:17, che.vil.xue:18, che.vil.xue:19, che.vil.xue:20}, in which the authors apply the Fourier transform to study the singular cohomology of several classes of algebraic varieties which are related to Hessenberg varieties.

\begin{theoremp}
\label{second}
Suppose that $G=SL_n$. In the bounded derived category of constructible complexes of sheaves on $\mathfrak{g}$, all irreducible summands of the derived pushfoward $\mu_{H*}\underline{\mathbb{C}}_{G\times^BH}$ have full support.
\end{theoremp}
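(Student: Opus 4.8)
The plan is to prove Theorem~\ref{second} by a Fourier transform argument that trades the family $\mu_H$ for one whose image lies in the nilpotent cone, where the Springer correspondence controls the possible simple summands. First, view $G\times_B H$ as a $G$-equivariant subbundle of the trivial bundle $(G/B)\times\mathfrak{g}$ over $G/B$ via $[g:x]\mapsto(gB,\,g\cdot x)$, so that $\mu_H$ factors as the closed embedding $G\times_B H\hookrightarrow(G/B)\times\mathfrak{g}$ followed by the second projection, which is proper since $G/B$ is projective. Identifying $\mathfrak{g}\cong\mathfrak{g}^{*}$ by the Killing form, the annihilator subbundle of $G\times_B H$ is $G\times_B H^{\perp}$, and $\mu_{H^{\perp}}$ arises in exactly the same way. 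The scaling action of $\mathbb{G}_m$ on $\mathfrak{g}$ lifts to $G\times_B H$ and makes $\mu_{H*}\underline{\mathbb{C}}_{G\times_B H}$ monodromic, so the Fourier--Sato transform $\mathfrak{F}=\mathfrak{F}_{\mathfrak{g}}$ is available; since the relative (fibrewise) Fourier--Sato transform over $G/B$ carries the extension by zero of $\underline{\mathbb{C}}_{G\times_B H}$ to that of $\underline{\mathbb{C}}_{G\times_B H^{\perp}}$ up to shift, and since $\mathfrak{F}$ commutes with pushforward along the transverse factor $G/B$, I expect to obtain
\[ \mathfrak{F}\bigl(\mu_{H*}\underline{\mathbb{C}}_{G\times_B H}\bigr)\;\cong\;\mu_{H^{\perp}*}\underline{\mathbb{C}}_{G\times_B H^{\perp}}\,[\,\text{shift}\,], \]
which for $H=\mathfrak{b}$ recovers the classical identity relating the Grothendieck--Springer and Springer sheaves.

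Next, since $\mathfrak{b}\subseteq H$ one has $H^{\perp}\subseteq\mathfrak{n}$, so $G\cdot H^{\perp}$ consists of nilpotent elements and $\mu_{H^{\perp}}$ factors through $\mathcal{N}$; as $G\times_B H^{\perp}$ is smooth and $\mu_{H^{\perp}}$ is proper, the decomposition theorem writes the right-hand side above as a direct sum of shifts of $G$-equivariant simple perverse sheaves $IC(\overline{\mathcal{O}},\mathcal{L})$ supported on $\mathcal{N}$. This is where the hypothesis $G=SL_n$ is used: in type $A$ the local system $\mathcal{L}$ in any such summand must be trivial, since it records the action of the component group $\pi_0(Z_G(e))$ (for $e\in\mathcal{O}$) on the cohomology of the fibre $\{gB: g^{-1}\cdot e\in H^{\perp}\}$, and in type $A$ this component group is generated by central elements of $SL_n$, which act trivially on $G/B$. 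Hence every simple summand is (a shift of) some $IC(\overline{\mathcal{O}},\underline{\mathbb{C}})$, and these are precisely the simple summands of the Springer sheaf, because in type $A$ the Springer correspondence is a bijection between partitions of $n$ and nilpotent orbits, all with trivial coefficients.

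Finally, the Fourier transform of the Springer sheaf is the Grothendieck--Springer sheaf, which has full support because the Grothendieck--Springer resolution is small; comparing $W$-isotypic components gives $\mathfrak{F}\bigl(IC(\overline{\mathcal{O}_{\psi}},\mathcal{L}_{\psi})\bigr)\cong IC_{\mathfrak{g}}(\widetilde{\mathcal{L}}_{\psi})$, which has full support $\mathfrak{g}$. Applying $\mathfrak{F}$ to the displayed identity and using that $\mathfrak{F}^{2}$ is the identity up to a shift and the antipodal involution $x\mapsto-x$ (both of which preserve full support), it follows that every simple summand of $\mu_{H*}\underline{\mathbb{C}}_{G\times_B H}$ has full support, which is Theorem~\ref{second}.

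The main obstacle is the Fourier--Sato identity of the first step: making it precise requires care with the monodromic formalism and with the commutation of the relative transform over $G/B$ — together with the ambient $G$-equivariance — with the proper pushforward to $\mathfrak{g}$. The second step is where type $A$ is genuinely essential: in other Lie types, cuspidal local systems or nontrivial component groups would contribute simple summands on $\mathcal{N}$ lying outside the image of the Springer correspondence, whose Fourier transforms are supported on proper subvarieties of $\mathfrak{g}$, so the support theorem cannot be deduced from this argument beyond type $A$.
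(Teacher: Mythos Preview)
Your proposal is correct and follows essentially the same route as the paper's proof (Theorem~\ref{main}): apply the Fourier transform to trade $\mu_{H*}\underline{\mathbb{C}}_{G\times_BH}$ for $\mu_{H^\perp*}\underline{\mathbb{C}}_{G\times_BH^\perp}$, decompose the latter into nilpotent orbital $IC$ sheaves, use the type~$A$ fact that only trivial local systems occur so that every summand lies in the image of the Springer correspondence, and conclude full support after transforming back. The only cosmetic difference is in how you justify triviality of the local systems---you invoke that $\pi_0(Z_{SL_n}(e))$ is generated by central elements acting trivially on $G/B$, whereas the paper extends the action to $GL_n$ and uses that $GL_n$-centralizers are connected; these are equivalent observations.
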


Precup has shown that any regular Hessenberg variety admits an affine paving \cite{pre:13} and has palindromic Betti numbers \cite{pre:17}. This makes it possible to extend a result of Brosnan and Chow \cite[Theorem 127]{bro.cho:18}---which identifies the singular cohomology of any regular Hessenberg variety with an invariant subring of the cohomology of a regular semisimple Hessenberg variety---from type $A$ to general Lie type. We apply this to show that regular Hessenberg varieties, though not necessarily smooth, have the ``K\"ahler package.'' This extends certain results of \cite{abe.hor.mas.mur.sat:19} in the regular nilpotent case, and gives additional evidence for the conjecture of Precup \cite{pre:17} that regular Hessenberg varieties are rationally smooth.

\begin{theoremp}
\label{third}
For any regular element $x$ of $\mathfrak{g}$, the singular cohomology ring $H^*(\Hess(x,H))$ satisfies Poincar\'e duality, the hard Lefschetz property, and the Hodge--Riemann relations.
\end{theoremp}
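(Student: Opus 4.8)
The plan is to present $\Hess(x,H)$ as a special fiber of the family of regular Hessenberg varieties, whose generic fibers are the smooth projective regular semisimple Hessenberg varieties, and to descend the K\"ahler package across the specialization map furnished by the local invariant cycle theorem. Write $\mathcal X:=\mu_H^{-1}(\mathfrak g^{\mathrm{reg}})$, an open and hence smooth subvariety of $G\times_B H$, and let $\bar\mu\colon\mathcal X\to\mathfrak g^{\mathrm{reg}}$ be the restriction of $\mu_H$; this map is proper, it is smooth over the regular semisimple locus $\mathfrak g^{\mathrm{rs}}$, its fibers are the regular Hessenberg varieties, and these all have the common dimension $d=\dim\Hess(x,H)$. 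Pulling back an ample line bundle from $G/B$ along $\mathcal X\to G/B$ produces a $\bar\mu$-ample class $\eta\in H^2(\mathcal X;\mathbb Q)$ whose restriction $\eta_t$ to any fiber $\Hess(t,H)$ is ample; write $\eta_x$ for its restriction to $\Hess(x,H)$. Fix a regular semisimple element $x_0$ in a small contractible neighbourhood $U$ of $x$ in $\mathfrak g^{\mathrm{reg}}$, and let $T$ be the image of $\pi_1(U\cap\mathfrak g^{\mathrm{rs}},x_0)$ under the monodromy representation on $H^*(\Hess(x_0,H))$.

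Next I would assemble the inputs. By De Mari--Procesi--Shayman the fiber $\Hess(x_0,H)$ is smooth, projective, and admits an affine paving; hence $H^*(\Hess(x_0,H))$ is pure of Tate type and satisfies Poincar\'e duality, hard Lefschetz with respect to $\eta_{x_0}$, and the Hodge--Riemann relations. By Precup, $\Hess(x,H)$ also admits an affine paving, so $H^*(\Hess(x,H))$ is concentrated in even degrees, pure of Tate type, and has Betti numbers that are palindromic about the middle degree $d$. By the extension to arbitrary Lie type of Brosnan--Chow's application of the local invariant cycle theorem to the regular Hessenberg family -- precisely the package of results recalled in the passage above -- the monodromy action of $\pi_1(\mathfrak g^{\mathrm{rs}})$ on the cohomology of a regular semisimple Hessenberg variety is Tymoczko's dot-action and therefore factors through the Weyl group, so $T$ is a finite subgroup of $W$; moreover the specialization map
\[
\mathrm{sp}\colon H^*(\Hess(x,H))\longrightarrow H^*(\Hess(x_0,H))^{T}
\]
is an isomorphism of graded rings sending $\eta_x$ to $\eta_{x_0}$, the latter lying in $H^2(\Hess(x_0,H))^{T}$ because it extends to the global class $\eta$ on $\mathcal X$.

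The descent of the K\"ahler package is then formal linear algebra. Since $T$ is finite it acts semisimply on $H^*(\Hess(x_0,H))$, so $H^*(\Hess(x_0,H))^{T}$ is a direct summand that, under the Poincar\'e pairing, is orthogonal to the sum of the nontrivial $T$-isotypic components; hence that pairing restricts to a perfect pairing on the invariants. As $\eta_{x_0}$ is $T$-fixed, the Lefschetz operator $L=\cup\,\eta_{x_0}$ is $T$-equivariant, so the hard Lefschetz isomorphisms $L^{k}\colon H^{d-k}\xrightarrow{\ \sim\ }H^{d+k}$ restrict to isomorphisms on $T$-invariants; and since everything is of Tate type, the primitive decomposition is $T$-equivariant and the Hodge--Riemann form, being definite on each primitive piece, remains definite on the ($T$-stable) primitive parts of the invariants. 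Thus $H^*(\Hess(x_0,H))^{T}$ satisfies Poincar\'e duality, hard Lefschetz, and the Hodge--Riemann relations with respect to $\eta_{x_0}$; transporting these properties through the ring isomorphism $\mathrm{sp}$, which intertwines $L$ with $\cup\,\eta_x$, gives the corresponding statements for $H^*(\Hess(x,H))$, and the theorem follows.

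The main obstacle, and the one place where Precup's results do essential work, is the assertion that $\mathrm{sp}$ is an isomorphism rather than merely the surjection guaranteed by the local invariant cycle theorem -- equivalently, that $\dim H^*(\Hess(x,H))=\dim H^*(\Hess(x_0,H))^{T}$. In the language of the decomposition theorem for the smooth proper map $\bar\mu$, this amounts to showing that the summands of $R\bar\mu_*\underline{\mathbb C}_{\mathcal X}$ supported on the non-regular-semisimple locus contribute to the stalk at $x$ exactly what the full-support summands already provide and nothing more. The affine paving of $\Hess(x,H)$ forces purity of its cohomology, the palindromy of its Betti numbers pins down the self-duality of the relevant graded pieces, and relative hard Lefschetz for $\bar\mu$ organizes the decomposition symmetrically; combining these rigidifies the decomposition enough to eliminate the excess, after which the rest of the argument is, as above, purely formal.
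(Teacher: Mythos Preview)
Your proposal is correct and follows essentially the same route as the paper: both arguments reduce to the isomorphism $H^*(\Hess(x,H))\cong H^*(\Hess(s,H))^T$ for a nearby regular semisimple $s$, obtained by combining Precup's palindromicity with the Brosnan--Chow equivalence (palindromic fibers $\Leftrightarrow$ the local invariant cycle map is an isomorphism), and then descend the K\"ahler package through the $T$-invariants using a $W$-invariant ample class pulled back from $G/B$. The only refinement in the paper you do not carry out is the explicit identification of the local monodromy group $T$ with the stabilizer $W_{x_{\mathrm s}}$ of the semisimple part of $x$; this sharpens the statement but, as you correctly observe, is not needed for the K\"ahler package---finiteness of $T$ (from the factorization through $W$) suffices. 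One small slip: in your final paragraph you call $\bar\mu$ a ``smooth proper map,'' but it is only smooth over $\mathfrak g^{\mathrm{rs}}$; what matters for the decomposition theorem and relative hard Lefschetz is that the source $\mathcal X$ is smooth and $\bar\mu$ is projective.
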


In Section \ref{monodromy section} we review the construction of Hessenberg varieties, the definition of the dot action, and its interpretation in terms of monodromy. In Section \ref{decomp section} we recall some conventions on the decomposition theorem, the Fourier transform, and the Springer correspondence. In Section \ref{what appears} we use these tools to prove Theorems \ref{first} and \ref{second}, which appear as Theorem \ref{summands} and Theorem \ref{main}. In Section \ref{locinv} we show that the results of Brosnan and Chow extend to all Lie types, and we apply them to prove Theorem \ref{third}, as Theorem \ref{kahler}. We include in an appendix some technical results about monodromy actions on equivariant cohomology. 

We are grateful to Victor Ginzburg for making us aware of the work of Chen, Vilonen, and Xue, and to Martha Precup for interesting discussions. We also thank the anonymous referees for their detailed and constructive comments. During the completion of this work, A.B. was partially supported by a National Science Foundation MSPRF under award DMS--1902921, and P.C. was partially supported by an NSERC Postdoctoral Fellowship under award PDF--516638.\\

%
%
%
%
%
%
%
%
\section{Monodromy actions of the Weyl group}
\label{monodromy section}
\subsection{Recollections on Hessenberg varieties}
Let $G$ be a connected, simply-connected, semisimple algebraic group over $\mathbb{C}$ and let $\mathfrak{g}$ be its Lie algebra. Fix a maximal torus $T$ and a Borel subgroup $B$ containing it, and write $\mathfrak{b}$ for the Lie algebra of $B$. Let $\mathcal{B}$ be the flag variety of all Borel subalgebras of $\mathfrak{g}$, which we freely identify with the homogeneous space $G/B$.

\begin{definition}
A \emph{Hessenberg subspace} of $\mathfrak{g}$ is a $B$-submodule $H\subset\mathfrak{g}$ that contains $\mathfrak{b}$.
\end{definition}

Given a Hessenberg subspace $H$, consider the associated $G$-equivariant vector bundle 
\[G\times^B H\longrightarrow G/B.\]
The total space of this vector bundle has a natural Poisson structure (introduced in \cite{abe.cro:18} and studied in \cite{bal:19}) for which the action of $G$ is Hamiltonian. The moment map is
\begin{align*}
\mu_H:\,G\times^B H&\longrightarrow \mathfrak{g} \\
	[g:x]&\longmapsto g\cdot x,
	\end{align*}
where we identify $\mathfrak{g}\cong\mathfrak{g}^*$ via the Killing form and where $g\cdot x$ denotes the adjoint action. 

\begin{remark}
\label{properness}
For any $B$-stable subspace $V\subseteq\mathfrak{g}$, the map
\begin{align*}
\mu_V:\,G\times^B V&\longrightarrow \mathfrak{g} \\
	[g:x]&\longmapsto g\cdot x
	\end{align*}
factors through the closed embedding
\begin{align*}
G\times^B V&\longrightarrow G/B\times \mathfrak{g} \\
	[g:x]&\longmapsto (gB,g\cdot x).
	\end{align*}
There is a commutative diagram
\begin{equation*}
\begin{tikzcd}[row sep=huge]
G\times^BV		\arrow[hookrightarrow]{r}\arrow{rd}[swap]{\mu_V}		&G/B\times \mathfrak{g}\arrow{d}{}\\
  														&\mathfrak{g},
\end{tikzcd}
\end{equation*}
where the vertical arrow is projection onto the second component. This implies that any morphism of the form $\mu_V$ is projective, and in particular proper.
\end{remark}

\begin{definition}
The \emph{Hessenberg variety} associated to the subspace $H$ and to a point $x\in\mathfrak{g}$ is
\begin{align*}
\Hess(x,H)&=\mu_H^{-1}(x) \\
		&=\left\{gB\in G/B\mid g^{-1}\cdot x\in H\right\}.
		\end{align*}
We call this Hessenberg variety \emph{regular} (resp. \emph{semisimple, nilpotent}) if $x$ is a regular (resp. semisimple, nilpotent) element of $\mathfrak{g}$.
\end{definition}

\begin{example}
(1) When $H=\mathfrak{b}$, the vector bundle $\tilde{\mathfrak{g}}=G\times^B\mathfrak{b}$ is the total space of the Grothendieck--Springer simultaneous resolution. Identifying the homogeneous space $G/B$ with the flag variety, the Hessenberg variety $\Hess(x,\mathfrak{b})$ is precisely the Grothendieck--Springer fiber 
\[\mathcal{B}_x=\{\mathfrak{b}'\in \mathcal{B}\mid x\in \mathfrak{b}'\}.\]

(2) Let $\Delta$ be the set of simple roots determined by $T$ and $B$, and consider the \emph{standard Hessenberg subspace}
\[H_0=\left(\sum_{\alpha\in\Delta}\mathfrak{g}_{-\alpha}\right)\oplus\mathfrak{b}.\]
For any regular element $x\in\mathfrak{g}$, the centralizer $G_x=\{g\in G\mid g\cdot x=x\}$ acts on the corresponding Hessenberg variety $\Hess(x,H_0)$ with an open dense orbit \cite{bal:19}. When $s\in\mathfrak{g}$ is regular and semisimple, $\Hess(s,H_0)$ is the toric variety corresponding to the fan of Weyl chambers \cite[Theorem 11]{dem.pro.sha:92}. When $e\in\mathfrak{g}$ is regular and nilpotent, $\Hess(e,H_0)$ is the Peterson variety \cite{rie:03}.
\end{example}

\begin{remark}
\label{fibers}
For any Hessenberg subspace $H$, there is a commutative diagram
\begin{equation*}
\begin{tikzcd}[row sep=huge]
\tilde{\mathfrak{g}}=G\times^B\mathfrak{b}		\arrow[hookrightarrow]{r}\arrow{rd}[swap]{\mu_{\mathfrak{b}}}		&G\times^B H\arrow{d}{\mu_H}\\
  														&\mathfrak{g}.
\end{tikzcd}
\end{equation*}
For every $x\in\mathfrak{g}$, this gives a natural $G_x$-equivariant inclusion 
\[\mathcal{B}_x\hooklongrightarrow\Hess(x,H).\]
When $x\in\mathfrak{g}$ is regular, the Grothendieck--Springer fiber $\mathcal{B}_x$ is precisely the set of fixed points for the action of $G_x$ on $\mathcal{B}$. It follows that for all regular $x\in\mathfrak{g}$,
\[\mathcal{B}_x=\Hess(x,H)^{G_x}.\]
\end{remark}

We recall some features of the geometry of Hessenberg varieties. Let $\mathfrak{g}^{\text{r}}$ (resp. $\mathfrak{g}^{\text{rs}}$) denote the locus of regular (resp. regular semisimple) elements of $\mathfrak{g}$. Given any subspace $V\subset\mathfrak{g}$, we write $V^{\text{r}}$ for the regular locus $V\cap\mathfrak{g}^{\text{r}}$ and $V^{\text{rs}}$ for the regular semisimple locus $V\cap \mathfrak{g}^{\text{rs}}$.  If $X$ is a topological space, we denote by $H^*(X)$ the singular cohomology of $X$ with complex coefficients. 

\begin{properties}
(1) For any regular element $x\in\mathfrak{g}$, the Hessenberg variety $\Hess(x,H)$ has dimension equal to $\dim(H/\mathfrak{b})$ \cite[Corollary 3]{pre:17}.

(2) When $s\in\mathfrak{g}$ is regular and semisimple, the Hessenberg variety $\Hess(s,H)$ is smooth. The restriction  
\[\mu_H:G\times^BH^{\text{rs}}\longrightarrow\mathfrak{g}^{\text{rs}}\]
of $\mu_H$ to the regular semisimple locus is a smooth morphism \cite[Theorem 6]{dem.pro.sha:92}.

(3) A Hessenberg subspace $H$ is called \emph{indecomposable} if it contains every negative simple root space. If $H$ is indecomposable, then $\Hess(s,H)$ is connected for all semisimple $s\in\mathfrak{g}$ \cite{pre:15}. It follows from a Zariski Main Theorem argument \cite[Remark 4.6]{bal:19} that all Hessenberg varieties $\Hess(x,H)$ associated to an indecomposable $H$ are connected.

(4) It is shown in \cite{pre:17} that for any regular $x\in\mathfrak{g}$, the Hessenberg variety $\Hess(x,H)$ has palindromic Betti numbers. In other words,
\[\dim H^k(\Hess(x,H))=\dim H^{\text{top}-k}(\Hess(x,H))\qquad\text{for any }k\in\mathbb{Z},\]
where $\text{top}=2\dim (H/\mathfrak{b})$. When $H$ is indecomposable and $x\in\mathfrak{g}$ is regular, connectedness and palindromicity imply that the top cohomology group $H^{\text{top}}(\Hess(x,H))$ has dimension 1. It follows that all regular Hessenberg varieties $\Hess(x,H)$ associated to an indecomposable $H$ are irreducible \cite[Corollary 14]{pre:17}.

(5) It is proved in \cite{abe.ded.gal.har:18} and \cite{abe.fuj.zen:20} that when $H$ is indecomposable, any regular Hessenberg variety is reduced. When $H$ is not indecomposable, this is not necessarily the case---for instance, if $e\in\mathfrak{g}$ is regular and nilpotent, the Grothendieck--Springer fiber $\mathcal{B}_e=\mu_{\mathfrak{b}}^{-1}(e)$ is not reduced \cite[Section 1.3.2]{yun:17}. Since we are only concerned with the topology of Hessenberg varieties, the potentially non-reduced structure will not be relevant.
\end{properties}

%
%
%
%
%
%
%
%
%
\subsection{GKM varieties and monodromy}
Let $A$ be a complex torus. A smooth projective $A$-variety $X$ is called a \emph{GKM variety} if 
\begin{itemize}
\item the set of $A$-fixed points on $X$ is finite, and
\item the set of one-dimensional $A$-orbits on $X$ is finite.
\end{itemize}

\begin{remark}
An arbitrary projective $A$-variety is GKM if in addition to these conditions it is equivariantly formal in the sense of \cite{gor.kot.mac:98}. Since smoothness implies equivariant formality, and since all of our GKM varieties will be smooth, we use the definition above for simplicity.
\end{remark}

We write $H^*_A(X)$ for the $A$-equivariant cohomology of $X$ with complex coefficients. In the rest of this section we will use a number of standard facts about equivariant cohomology, which we review in the first section of the appendix. 

\begin{proposition}\cite[Theorem 1.6.2]{gor.kot.mac:98}
\label{gkm}
 Suppose that $X$ is a GKM variety. 
\begin{enumerate}
\item The restriction map $H^*_A(X)\longrightarrow H^*_A(X^A)$ is injective.
\item The specialization map $H^*_A(X)\longrightarrow H^*(X)$ is surjective.
\end{enumerate}
\end{proposition}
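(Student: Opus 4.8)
The plan is to deduce both assertions from the \emph{equivariant formality} of $X$ together with the classical localization theorem; neither statement in fact uses the finiteness of $X^A$ or of the one-dimensional orbits (those enter only in the combinatorial description of the image of the restriction map, which is not claimed here), but only that $X$ is smooth and projective. Write $R = H^*_A(\mathrm{pt})$, the polynomial algebra on $\mathfrak a^*$, and let $X_A$ be the Borel construction, so that $H^*_A(X) = H^*(X_A)$ and $X \hookrightarrow X_A \to BA$ is a fibration with fiber $X$. The one substantial step is to prove that $H^*_A(X)$ is a free $R$-module, equivalently that the Serre spectral sequence of this fibration degenerates at $E_2$.

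First I would establish this equivariant formality by means of the Bia\l{}ynicki-Birula decomposition. Choosing a generic cocharacter $\lambda\colon\mathbb C^\times\to A$ with $X^{\mathbb C^\times}=X^A$, one obtains a filtration of $X$ by closed $A$-stable subvarieties $\varnothing=Z_0\subset Z_1\subset\cdots\subset Z_m=X$ whose successive differences $Z_k\smallsetminus Z_{k-1}$ are affine-space bundles over the (smooth projective) connected components $F_k$ of $X^A$. By the Thom isomorphism, the $A$-equivariant cohomology with supports in $Z_k\smallsetminus Z_{k-1}$ is $H^{*-2c_k}_A(F_k)\cong R\otimes_{\mathbb C}H^{*-2c_k}(F_k)$, a free $R$-module, where $c_k$ is the complex codimension of the stratum. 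Feeding these into the long exact sequences associated to the filtration and using a weight argument --- purity of $H^*(X)$ and of the cohomology of the strata forces the connecting maps to vanish --- one sees inductively that each $H^*_{A,Z_k}(X)$, hence $H^*_A(X)$, is a free $R$-module. (Alternatively one may simply invoke the standard fact that every smooth projective $A$-variety is equivariantly formal.)

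Granting equivariant formality, part (2) is immediate: degeneration of the Serre spectral sequence says exactly that the edge homomorphism $H^*(X_A)\to H^*(X)$ --- which is the specialization map, i.e.\ restriction to a fiber, equivalently the canonical quotient onto $H^*_A(X)\otimes_R\mathbb C$ --- is surjective. For part (1), the localization theorem of Atiyah--Bott and Quillen shows that the restriction map $i^*\colon H^*_A(X)\to H^*_A(X^A)$ becomes an isomorphism after inverting the finitely many weights of $A$ on the normal bundle of $X^A$; in particular $\ker i^*$ is a torsion $R$-module. But $H^*_A(X)$ is free, hence torsion-free, so $\ker i^*=0$, which is the injectivity claim. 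The main obstacle is thus concentrated entirely in the first step, the verification of equivariant formality; once that is in hand parts (1) and (2) follow formally, and for the varieties at issue it is subsumed by the general smooth-projective statement.
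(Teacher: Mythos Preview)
The paper does not supply its own proof of this proposition; it simply cites the result from Goresky--Kottwitz--MacPherson. Your sketch is correct and follows the standard line: equivariant formality of a smooth projective $A$-variety (established, as you say, via Bia\l{}ynicki-Birula or a weight argument, or just quoted) gives part~(2) as surjectivity of the edge homomorphism of the Serre spectral sequence, and together with the localization theorem it forces the kernel of $i^*$ to be an $R$-torsion submodule of a free $R$-module, hence zero, which is part~(1). Your remark that the finiteness of $X^A$ and of the one-dimensional orbits plays no role in either assertion is also correct; those hypotheses are only needed for the combinatorial description of the image of $i^*$, which the paper does not use here.
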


Let $H \subset\mathfrak{g}$ be a Hessenberg subspace and let $s\in\mathfrak{g}$ be a regular semisimple element. The centralizer $G_s$ is a maximal torus of $G$ which acts on the associated Hessenberg variety $\Hess(s,H)$.

\begin{proposition} \cite[Section III]{dem.pro.sha:92}
\label{hessgkm}
The Hessenberg variety $\Hess(s,H)$ is a GKM variety for the action of $G_s$.
\end{proposition}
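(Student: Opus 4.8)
The plan is to verify directly the two finiteness conditions in the definition of a GKM variety, having first normalized the torus. Since all maximal tori of $G$ are conjugate, choose $g\in G$ with $g\cdot s\in\mathfrak{t}$; left translation $hB\mapsto ghB$ then gives an isomorphism $\Hess(s,H)\xrightarrow{\sim}\Hess(g\cdot s,H)$ carrying the $G_s$-action to the $T$-action through conjugation by $g$, so we may assume $s\in\mathfrak{t}$ and $G_s=T$. With this normalization, $\Hess(s,H)$ is a $T$-stable closed subvariety of the flag variety $\mathcal{B}=G/B$; it is projective because $\mathcal{B}$ is, and it is smooth by the general properties of regular semisimple Hessenberg varieties recalled above, so it remains only to count its $T$-fixed points and one-dimensional $T$-orbits.

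For the finiteness of the fixed locus I would invoke Remark \ref{fibers}: because $s$ is regular, $\Hess(s,H)^{G_s}=\mathcal{B}_s$, which is the finite set of Borel subalgebras containing the regular semisimple element $s$. Equivalently, one can simply note $\Hess(s,H)^{T}\subseteq\mathcal{B}^{T}=\{\,wB : w\in W\,\}$, a set of $|W|$ points.

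For the finiteness of the one-dimensional orbits, the key point is that $(\mathcal{B},T)$ is itself the prototypical GKM variety: it has only finitely many one-dimensional $T$-orbits, whose closures are the $T$-invariant $\mathbb{P}^1$'s joining certain pairs of fixed points $wB$, indexed by a Weyl group element together with a positive root. Since the inclusion $\Hess(s,H)\hookrightarrow\mathcal{B}$ is a $T$-equivariant closed immersion, every one-dimensional $T$-orbit contained in $\Hess(s,H)$ is literally one of these finitely many orbits of $\mathcal{B}$, so $\Hess(s,H)$ has only finitely many one-dimensional $T$-orbits. With the previous paragraph, this proves $\Hess(s,H)$ is GKM for $G_s$.

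I do not anticipate a genuine obstacle: the argument simply assembles standard inputs — smoothness and projectivity of $\Hess(s,H)$, the classical description of the $T$-fixed points and $T$-invariant curves in $\mathcal{B}$, and the identification $\Hess(s,H)^{G_s}=\mathcal{B}_s$ from Remark \ref{fibers}. The only point that needs a word of care is the conjugation reducing $G_s$ to $T$; one can avoid it entirely by working with the maximal torus $S=G_s$ throughout and using that $\mathcal{B}$ is GKM for $S$ because an inner automorphism of $G$ transports the $T$-structure to the $S$-structure.
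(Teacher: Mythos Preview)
Your argument is correct, but there is nothing to compare it to: the paper does not supply its own proof of this proposition. It is stated with the citation \cite[Section III]{dem.pro.sha:92} and no proof environment follows; the result is simply quoted from the literature.

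That said, your proof is essentially the standard one and is what one finds in the cited reference. The reduction to $G_s=T$ via conjugation is fine; smoothness and projectivity are exactly the general properties recalled just above the proposition. The finiteness of fixed points follows, as you say, either from Remark~\ref{fibers} or from the inclusion $\Hess(s,H)^T\subseteq\mathcal{B}^T=\{wB:w\in W\}$. The finiteness of one-dimensional $T$-orbits is deduced correctly from the fact that $\mathcal{B}$ itself has only finitely many such orbits (the $T$-curves joining $wB$ to $ws_\alpha B$), together with the $T$-equivariance of the closed embedding $\Hess(s,H)\hookrightarrow\mathcal{B}$. No step is missing.
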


\begin{remark}
Using Proposition \ref{gkm}, Tymoczko \cite{tym:08} defined an action of the Weyl group on $H^*(\Hess(s,H))$. In the rest of this section we will show that it is induced by the natural monodromy action of the fundamental group $\pi_1(\mathfrak{g}^{\text{rs}},s)$. 

The connection between the dot action and monodromy was established by Brosnan and Chow. While their paper \cite{bro.cho:18} only makes this identification in type $A$, there is a straightforward generalization of their argument to arbitrary Lie type. We formulate it below for completeness, and we also include in the appendix a number of details about monodromy actions on equivariant cohomology.
\end{remark}

Let $\mathfrak{t}$ be the Lie algebra of the maximal torus $T$, and let $W$ be the associated Weyl group. Consider the restriction $\tilde{\mathfrak{g}}^{\text{rs}}=\mu_{\mathfrak{b}}^{-1}(\mathfrak{g}^{\text{rs}})$ of the Grothendieck--Springer resolution to the regular semisimple locus. There is a Cartesian diagram
\begin{equation}
\label{rs-groth-springer}
\begin{tikzcd}[row sep=huge]
\tilde{\mathfrak{g}}^{\text{rs}}=G\times^B\mathfrak{b}^{\text{rs}}		\arrow{r}\arrow{d}{\mu_{\mathfrak{b}}}	&\mathfrak{t^{\text{r}}}	\arrow{d}\\
\mathfrak{g}^{\text{rs}}	\arrow{r}					&\mathfrak{t}^{\text{r}}/W,
\end{tikzcd}
\end{equation}
where the top horizontal arrow is projection onto the first summand of the decomposition $\mathfrak{b}=\mathfrak{t}\oplus[\mathfrak{b},\mathfrak{b}]$, and the bottom arrow is induced by the Chevalley isomorphism \cite[Theorem 3.1.38]{chr.gin:97}. The smooth morphism
\[\tilde{\mathfrak{g}}^{\text{rs}}\longrightarrow\mathfrak{g}^{\text{rs}}\]
is a Galois cover with Galois group  $W$ \cite[Proposition 3.1.36]{chr.gin:97}. Therefore, for every $s\in\mathfrak{g}^{\text{rs}}$ with fixed preimage $\tilde{s}\in\tilde{\mathfrak{g}}^{\text{rs}}$, there is a surjective group homomorphism
\begin{equation}
\label{factors}
\tag{$\star$}
\pi_1(\mathfrak{g}^{\text{rs}},s)\longrightarrow W
\end{equation}
whose kernel is the image of the natural map
\[\pi_1(\tilde{\mathfrak{g}}^{\text{rs}},\tilde{s})\longrightarrow\pi_1(\mathfrak{g}^{\text{rs}},s).\]

For every $s\in\mathfrak{t}^{\text{r}}$, the morphism
\[\mu_H: G\times^BH^{\text{rs}}\longrightarrow \mathfrak{g}^{\text{rs}}\]
induces monodromy actions of $\pi_1(\mathfrak{g}^{\text{rs}},s)$ on $H^*(\Hess(s,H))$ and on $H^*_T(\Hess(s,H))$, as explained in \cite[Chapter I]{del:70}. These actions are described in the following proposition, whose proof we postpone to the appendix.

\begin{proposition}
\label{monodromy factors}
Let $s\in\mathfrak{t}$ be a regular element. The monodromy action of $\pi_1(\mathfrak{g}^{\text{\emph{rs}}},s)$ on $H^*(\Hess(s,H))$ factors through \eqref{factors}.
\end{proposition}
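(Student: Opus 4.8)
The plan is to deduce the statement from the GKM structure of $\Hess(s,H)$, reducing the monodromy on $H^*(\Hess(s,H))$ to the monodromy on the equivariant cohomology of the finite fixed locus $\mathcal{B}_s=\Hess(s,H)^{G_s}$, where it is visibly controlled by the Weyl group.

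First I would record the equivariant monodromy together with its naturality. The family $\mu_H\colon G\times_B H^{\text{rs}}\to\mathfrak g^{\text{rs}}$ carries a fibrewise action of the family of centralizers $\{G_y\}_{y\in\mathfrak g^{\text{rs}}}$, and this structure --- whose bookkeeping is the purpose of the appendix --- produces a monodromy action of $\pi_1(\mathfrak g^{\text{rs}},s)$ on $H^*_{G_s}(\Hess(s,H))$ for which both the specialization map $H^*_{G_s}(\Hess(s,H))\to H^*(\Hess(s,H))$ and the restriction-to-fixed-points map $H^*_{G_s}(\Hess(s,H))\to H^*_{G_s}(\mathcal{B}_s)$ are $\pi_1(\mathfrak g^{\text{rs}},s)$-equivariant. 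Since $s\in\mathfrak t^{\text{r}}$ we have $G_s=T$, so these are the maps appearing in Propositions \ref{gkm} and \ref{hessgkm} and in the statement.

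Next I would bring in GKM theory. By Proposition \ref{hessgkm}, $\Hess(s,H)$ is a GKM variety for $G_s=T$, so Proposition \ref{gkm} gives that the restriction map $H^*_T(\Hess(s,H))\to H^*_T(\mathcal{B}_s)$ is injective and the specialization map $H^*_T(\Hess(s,H))\to H^*(\Hess(s,H))$ is surjective; here I have used Remark \ref{fibers} to identify $\Hess(s,H)^T=\mathcal{B}_s$. Now $\mathcal{B}_s=\mu_{\mathfrak b}^{-1}(s)$ is the fibre over $s$ of the Galois cover $\tilde{\mathfrak g}^{\text{rs}}\to\mathfrak g^{\text{rs}}$; it is finite, and by construction of \eqref{factors} the monodromy action of $\pi_1(\mathfrak g^{\text{rs}},s)$ on the set $\mathcal{B}_s$ is the one induced by the surjection \eqref{factors} onto the deck group $W$. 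Writing $H^*_T(\mathcal{B}_s)\cong H^*_T(\pt)\otimes\mathbb C[\mathcal{B}_s]$, the monodromy therefore acts diagonally: on $\mathbb C[\mathcal{B}_s]$ it permutes the points of $\mathcal{B}_s$ through \eqref{factors}, and on $H^*_T(\pt)=\mathrm{Sym}(\mathfrak t^*)$ it acts through the monodromy of the family $\{G_y\}$. The latter becomes the constant torus $T$ after base change along the Galois cover --- this is encoded in the Cartesian square \eqref{rs-groth-springer} --- so it too factors through \eqref{factors}, acting by the tautological action of $W$ on $\mathfrak t$. Hence the monodromy on $H^*_T(\mathcal{B}_s)$ factors through \eqref{factors}.

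Finally I would conclude by a squeeze. As a $\pi_1(\mathfrak g^{\text{rs}},s)$-equivariant subspace of $H^*_T(\mathcal{B}_s)$, the space $H^*_T(\Hess(s,H))$ inherits a monodromy action factoring through \eqref{factors}; as a $\pi_1(\mathfrak g^{\text{rs}},s)$-equivariant quotient of it, $H^*(\Hess(s,H))$ does as well, which is the claim. The main obstacle --- and the reason the argument is not purely formal --- lies in the first step: the monodromy action on equivariant cohomology has to be constructed in a situation where the acting torus $G_s$ itself moves along loops in $\mathfrak g^{\text{rs}}$, and one must check that restriction to fixed points and specialization are equivariant for it. I would isolate these verifications in the appendix and take them as given here.
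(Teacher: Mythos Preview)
Your proposal is correct and follows essentially the same approach as the paper: reduce to $H^*_T(\mathcal{B}_s)$ via the GKM injectivity of restriction to fixed points, show the monodromy there factors through \eqref{factors} (the paper packages your diagonal analysis on $H^*_T(\pt)\otimes\mathbb C[\mathcal{B}_s]$ as Corollary~\ref{factorscor}), and then descend via the surjective specialization map. You have also correctly identified that the real work---constructing the equivariant monodromy with a moving torus and checking equivariance of restriction and specialization---is relegated to the appendix (Propositions~\ref{equivariant restriction} and~\ref{equivariant edge}).
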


%
%
%
%
%
%
%
%
%
\subsection{The Tymoczko dot action}
There is a natural isomorphism $H_T(\pt)\cong\mathbb{C}[\mathfrak{t}]$ \cite[Example 1.2]{bri:98} between the $T$-equivariant cohomology of a point and the algebra of polynomial functions on $\mathfrak{t}$. For any regular element $s\in\mathfrak{t}$ this gives an isomorphism
\[H^*_T(\mathcal{B}_s)\cong\bigoplus_{\tilde{s}\in\mathcal{B}_s}\mathbb{C}[\mathfrak{t}],\]
where the left-hand side is the equivariant cohomology of the Grothendieck--Springer fiber $\mathcal{B}_s$, on which the Weyl group acts freely and transitively. The \emph{dot action} on $H^*_T(\mathcal{B}_s)$ is given by
\[w\cdot (f_{\tilde{s}})=(wf_{w^{-1}\tilde{s}})\qquad\text{for any }w\in W,\, (f_{\tilde{s}})\in \bigoplus_{\tilde{s}\in\mathcal{B}_s}\mathbb{C}[\mathfrak{t}],\]
where the action of $W$ on $\mathbb{C}[\mathfrak{t}]$ is induced by its action on $\mathfrak{t}$. 

Tymoczko originally defined this action in type $A$ \cite[Section 3.1]{tym:08}. She proved that it restricts to an action of $W$ on the image of the embedding
\begin{equation}
\label{restr}
H^*_T(\Hess(s,H))\hooklongrightarrow H^*_T(\mathcal{B}_s),
\end{equation}
and that it descends to an action of $W$ on $H^*(\Hess(s,H))$ through the surjection
\begin{equation}
\label{spec}
H^*_T(\Hess(s,H))\longrightarrow H^*(\Hess(s,H)).
\end{equation}
These results extend to arbitrary semisimple Lie algebras \cite[Section 8.3]{abe.hor.mas.mur.sat:19} to give a dot action of $W$ on the cohomology of any regular semisimple Hessenberg variety. 

We will show that the dot action on $H^*_T(\mathcal{B}_s)$ agrees with the monodromy action of $W$ induced by \eqref{factors}. This will imply that the dot action on $H^*(\Hess(s,H))$ coincides with the monodromy action of $W$ coming from Proposition \ref{monodromy factors}.

\begin{proposition}
\label{equivariant coincidence}
Let $s\in\mathfrak{t}$ be a regular element. The monodromy action of $W$ on $H^*_T(\mathcal{B}_s)$ coincides with the dot action.
\end{proposition}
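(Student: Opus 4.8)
The plan is to pin down both $W$-actions explicitly on the finite set $\mathcal{B}_s$ and check that the resulting formulas agree. Since $s\in\mathfrak{t}$ is regular, $\mathcal{B}_s$ is the set of Borel subalgebras containing $\mathfrak{t}$, and by the Cartesian diagram \eqref{rs-groth-springer} it is precisely the fiber over $s$ of the $W$-Galois cover $\tilde{\mathfrak{g}}^{\text{rs}}\to\mathfrak{g}^{\text{rs}}$. Choosing representatives $\dot{w}\in N_G(T)$ for the elements $w\in W$ yields a bijection $W\xrightarrow{\ \sim\ }\mathcal{B}_s$, $w\mapsto\tilde{s}_w$, under which the surjection \eqref{factors} is characterized by the rule that a loop $\gamma$ is sent to the unique $w_\gamma\in W$ for which the lift of $\gamma$ through the cover starting at $\tilde{s}_e$ terminates at $\tilde{s}_{w_\gamma}$. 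Consequently the monodromy of $\gamma$ permutes the points of $\mathcal{B}_s$ by $\tilde{s}_w\mapsto\tilde{s}_{w_\gamma w}$, which is exactly the permutation of indices appearing in the Tymoczko formula $w\cdot(f_{\tilde{s}})=(wf_{w^{-1}\tilde{s}})$; so the two actions already agree up to what they do to the coefficients.

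By Corollary \ref{factorscor} the monodromy action on $H^*_T(\mathcal{B}_s)$ also factors through \eqref{factors}, so it remains to compare the resulting $W$-action with the dot-action, and for this I would unwind the construction in Theorem \ref{actual action}. Under the decomposition $H^*_T(\mathcal{B}_s)\cong\bigoplus_{w\in W}H^*_T(\{\tilde{s}_w\})\cong\bigoplus_{w\in W}\mathbb{C}[\mathfrak{t}]$, the $T$-equivariant monodromy does more than permute summands: forming it requires trivializing along $\gamma$ the family of centralizer tori $G_{s'}$ (each identified with the reference torus $T=G_s$), and this trivialization returns twisted by $w_\gamma$. Tracking this through, the action on each summand $H^*_T(\{\tilde{s}_w\})\cong\mathbb{C}[\mathfrak{t}]=H^*_T(\pt)$ is multiplication by the automorphism of $\mathbb{C}[\mathfrak{t}]$ induced by $w_\gamma$ acting on $\mathfrak{t}$, uniformly in $w$. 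Combining the two effects, $\gamma$ acts on $H^*_T(\mathcal{B}_s)$ by $(f_{\tilde{s}})\mapsto(w_\gamma f_{w_\gamma^{-1}\tilde{s}})$, that is, exactly as the dot-action of $w_\gamma$; since $\gamma\mapsto w_\gamma$ is the surjection \eqref{factors}, this identifies the two $W$-actions.

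The step I expect to be the main obstacle is the extraction of this coefficient twist by $w_\gamma$ from Theorem \ref{actual action}, together with the reconciliation of the various conventions in play — left versus right regular action on $\mathcal{B}_s$, the chosen identification $\mathcal{B}_s\cong W$, the sign in \eqref{factors}, and the normalization $H^*_T(\pt)\cong\mathbb{C}[\mathfrak{t}]$ — so that the composite reproduces the dot-action formula on the nose rather than only up to an overall automorphism. A convenient simplification is that it suffices to verify the identity on a generating set of $\pi_1(\mathfrak{g}^{\text{rs}},s)$, for instance on loops whose images under \eqref{factors} are the simple reflections $s_\alpha$. Such a loop can be chosen to cross a single wall of the dominant Weyl chamber, reducing the computation to one of essentially rank one: one sees directly that the monodromy interchanges the pairs $\{\tilde{s}_w,\tilde{s}_{s_\alpha w}\}$ and twists the coefficients by $s_\alpha$, which is precisely $s_\alpha\cdot(f_{\tilde{s}})=(s_\alpha f_{s_\alpha\tilde{s}})$.
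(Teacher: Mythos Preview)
Your argument is correct and rests on the same idea as the paper's: under the K\"unneth decomposition $H^*_T(\mathcal{B}_s)\cong H^0(\mathcal{B}_s)\otimes\mathbb{C}[\mathfrak{t}]$, both actions are the diagonal $W$-action---permutation of the points of $\mathcal{B}_s$ on the first factor, and the natural $W$-action on $\mathfrak{t}$ on the second.

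Where you diverge from the paper is in how much of Corollary~\ref{factorscor} you use. You invoke it only for the statement that the monodromy factors through \eqref{factors}, and then propose to compute the resulting $W$-action by unwinding Theorem~\ref{actual action}, flagging the extraction of the ``coefficient twist'' on $\mathbb{C}[\mathfrak{t}]$ as the main obstacle. But Corollary~\ref{factorscor} already says more: it asserts that the monodromy action \emph{agrees with the $W$-action defined in Remark~\ref{weyl action}}, and that Remark defines precisely the diagonal action. The coefficient twist you want to extract is exactly the content of Proposition~\ref{point}, which is what Corollary~\ref{factorscor} is built on. Consequently the paper's proof is two lines: observe that the Tymoczko dot-action is, by inspection, the diagonal action of Remark~\ref{weyl action}; then cite Corollary~\ref{factorscor} to conclude the monodromy action is the same. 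Your explicit computation---tracking the permutation via the Galois cover, trivializing the family of centralizer tori, and reducing to simple reflections---is a perfectly good way to reprove that appendix material, but it is unnecessary once you take the full strength of Corollary~\ref{factorscor}.
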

\begin{proof}
There is an isomorphism of graded algebras
\[H^*_T(\mathcal{B}_s)\cong  H^0(\mathcal{B}_s)\otimes \mathbb{C}[\mathfrak{t}],\]
as in Remark \ref{weyl action} of the appendix. The dot action on $H^*_T(\mathcal{B}_s)$ is exactly the diagonal $W$-action induced by the natural actions of the Weyl group on $\mathcal{B}_s$ and on $\mathfrak{t}$. The conclusion now follows from Corollary \ref{factorscor}.
\end{proof}

\begin{corollary}
\label{coincidence}
Let $s\in\mathfrak{t}$ be a regular element. The monodromy action of $W$ on $H^*(\Hess(s,H))$ coincides with the dot action.
\end{corollary}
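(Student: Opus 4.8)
The plan is to transport the equivariant statement of Proposition~\ref{equivariant coincidence} down to ordinary cohomology through the two GKM maps
\[
H^*_T(\Hess(s,H))\hooklongrightarrow H^*_T(\mathcal{B}_s)
\qquad\text{and}\qquad
H^*_T(\Hess(s,H))\longrightarrow H^*(\Hess(s,H)),
\]
following exactly the route by which Tymoczko's dot-action is itself defined on $H^*(\Hess(s,H))$ from the dot-action on $H^*_T(\mathcal{B}_s)$. By Proposition~\ref{equivariant coincidence}, the monodromy action of $W$ on $H^*_T(\mathcal{B}_s)$ \emph{is} the Tymoczko dot-action, so it suffices to check that both the restriction map \eqref{restr} and the specialization map \eqref{spec} intertwine the monodromy actions, and then invoke the definitions of the dot-action on $H^*_T(\Hess(s,H))$ and on $H^*(\Hess(s,H))$.

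For the first step I would argue that \eqref{restr} is $\pi_1(\mathfrak{g}^{\text{rs}},s)$-equivariant by Proposition~\ref{equivariant restriction}, and that both monodromy actions factor through \eqref{factors} as in the proof of Proposition~\ref{monodromy factors}; hence \eqref{restr} is $W$-equivariant. Its image is therefore a $W$-stable subspace of $H^*_T(\mathcal{B}_s)$ on which the monodromy $W$-action restricts the monodromy $W$-action on $H^*_T(\mathcal{B}_s)$, i.e.\ the dot-action. Since Tymoczko's dot-action on $H^*_T(\Hess(s,H))$ is by definition obtained by restricting the dot-action on $H^*_T(\mathcal{B}_s)$ to this image, the monodromy $W$-action and the dot-action on $H^*_T(\Hess(s,H))$ coincide.

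For the second step I would note that \eqref{spec} is surjective by Proposition~\ref{gkm}(2) and $\pi_1(\mathfrak{g}^{\text{rs}},s)$-equivariant by Proposition~\ref{equivariant edge}, hence $W$-equivariant, with $W$-stable kernel. The Tymoczko dot-action on $H^*(\Hess(s,H))$ is, by construction, the action descended along \eqref{spec} from the dot-action on $H^*_T(\Hess(s,H))$; since that action agrees with the monodromy action by the previous paragraph and \eqref{spec} intertwines the monodromy actions, the descended action is the monodromy action of $W$ on $H^*(\Hess(s,H))$. This is the claim. Given all of the infrastructure already in place, there is no genuine obstacle here: the one point that needs to be checked — that the monodromy $W$-action preserves the subspace $H^*_T(\Hess(s,H))\subset H^*_T(\mathcal{B}_s)$ and descends along the specialization — is exactly what the equivariance results of the appendix together with the GKM injectivity and surjectivity of Proposition~\ref{gkm} supply, so the corollary is a formal consequence of Propositions~\ref{monodromy factors} and~\ref{equivariant coincidence}.
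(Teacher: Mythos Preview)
Your proposal is correct and follows essentially the same route as the paper: both argue that the dot-action and the monodromy action on $H^*(\Hess(s,H))$ are each induced from the corresponding action on $H^*_T(\mathcal{B}_s)$ via the maps \eqref{restr} and \eqref{spec}, and then invoke Proposition~\ref{equivariant coincidence}. The paper's version is terser because it simply points back to the proof of Proposition~\ref{monodromy factors} for the equivariance of \eqref{restr} and \eqref{spec}, whereas you unpack those steps explicitly.
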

\begin{proof}
The dot action on $H^*(\Hess(s,H))$ is induced from the dot action of $W$ on $H^*_T(\mathcal{B}_s)$ through the maps \eqref{restr} and \eqref{spec}. In the proof of Proposition \ref{monodromy factors}, it is shown that the monodromy action on $H^*(\Hess(s,H))$ is induced from the monodromy action of $W$ on $H^*_T(\mathcal{B}_s)$ in the same way. In view of Proposition \ref{equivariant coincidence}, the two actions agree.\qedhere\\
\end{proof}

%
%
%
%
%
%
%
%
%
\section{The decomposition theorem and Springer theory}
\label{decomp section}
\subsection{The decomposition theorem}
For any algebraic variety $X$, we denote by $D(X)$ the bounded derived category of complexes of sheaves on $X$ which are constructible with respect to a fixed stratification. We write $\underline{\mathbb{C}}_X$ for the constant sheaf on $X$ with coefficients in $\mathbb{C}$. 

For any local system $\mathcal{L}$ on a stratum $S$, write $\mathcal{L}[-]=\mathcal{L}[\dim S]$ for the shift of $\mathcal{L}$ which is perverse. Let $IC_S(\mathcal{L})$ be the unique perverse sheaf on $X$ which is supported on $\overline{S}$ and whose restriction to $S$ is $\mathcal{L}[-]$. Now suppose that $X$ is smooth and that $\varphi:X\longrightarrow Y$ is a proper morphism. Up to shift, proper base change gives a graded isomorphism
\[(R^*\varphi_*\underline{\mathbb{C}}_X[-])_y\cong H^{*}(\varphi^{-1}(y))\]
between the cohomology of the stalks of the derived pushforward $\varphi_*\underline{\mathbb{C}}_X$ and the singular cohomology of the fibers of $\varphi$ \cite[Fact 2.2.1]{dec:17}.

We will use the following version of the decomposition theorem of Beilinson, Bernstein, Deligne, and Gabber \cite{bei.ber.del:82}. For more details on this setting, we refer to \cite[Section 1.6]{dec:17}.

\begin{theorem}[Decomposition Theorem]
\label{bbdg}
Suppose that $X$ is a smooth algebraic variety and that $\varphi:X\longrightarrow Y$ is a proper morphism. Then there is an isomorphism in the derived category
\[\varphi_*\underline{\mathbb{C}}_X\cong\bigoplus_{S,b}IC_S(\mathcal{L}_{S,b})[-b],\]
taken over all strata $S\subset Y$ and over all integers $b$, and where each $\mathcal{L}_{S,b}$ is a semisimple local system on the stratum $S$.
\end{theorem}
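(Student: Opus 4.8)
The plan is to deduce this statement from the decomposition theorem of Beilinson, Bernstein, Deligne, and Gabber \cite{bei.ber.del:82} in its standard formulation; the only real work is translating between normalizations. First I would observe that, since $X$ is smooth of dimension $d=\dim X$, the shifted sheaf $\underline{\mathbb{C}}_X[d]$ is the intersection cohomology complex of $X$, and in particular is a pure perverse sheaf underlying a pure Hodge module. The hypothesis that $\varphi$ is proper is precisely what allows one to invoke the theorem: $\varphi_*$ preserves purity, and the decomposition theorem gives a (non-canonical) isomorphism
\[
\varphi_*\underline{\mathbb{C}}_X[d]\;\cong\;\bigoplus_{i\in\mathbb{Z}}{}^{p}\mathcal{H}^i\bigl(\varphi_*\underline{\mathbb{C}}_X[d]\bigr)[-i],
\]
in which each perverse cohomology sheaf ${}^{p}\mathcal{H}^i$ is a \emph{semisimple} perverse sheaf on $Y$.

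Next I would apply the classification of simple perverse sheaves. Each ${}^{p}\mathcal{H}^i$ is a finite direct sum of simple perverse sheaves, and every simple perverse sheaf has the form $IC_{\overline{S}}(\mathcal{M})$ for an irreducible locally closed smooth subvariety $S\subseteq Y$ and an irreducible local system $\mathcal{M}$ on $S$. Collecting the simple summands that share a common support $\overline{S}$ and a common shift, and using that $\bigoplus_j IC_{\overline{S}}(\mathcal{M}_j)=IC_{\overline{S}}\bigl(\bigoplus_j\mathcal{M}_j\bigr)$ with the resulting local system semisimple, the isomorphism above becomes
\[
\varphi_*\underline{\mathbb{C}}_X[d]\;\cong\;\bigoplus_{S,\,b}IC_S(\mathcal{L}_{S,b})[-b]
\]
with each $\mathcal{L}_{S,b}$ semisimple. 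Undoing the global shift by $[d]$ replaces $\underline{\mathbb{C}}_X[d]$ by $\underline{\mathbb{C}}_X$ and merely relabels the integers $b$, which yields the stated form. Finally, since the isomorphism is to live inside $D(Y)$ for a fixed stratification, one first fixes a Whitney stratification of $Y$ adapted to the proper map $\varphi$ over which $\varphi_*\underline{\mathbb{C}}_X$ is constructible; every simple constituent $IC_{\overline{S}}(\mathcal{M})$ appearing above is then constructible with respect to a common refinement of it, and one takes that refinement as the stratification defining $D(Y)$.

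I do not expect a genuine obstacle, since all of the analytic and arithmetic depth is imported wholesale from \cite{bei.ber.del:82} (see also \cite[Section 1.6]{dec:17}). The only points requiring care are bookkeeping: matching the paper's perverse normalization $\mathcal{L}[-]=\mathcal{L}[\dim S]$ with the standard one so that the shifts $[-b]$ come out as written, and checking that grouping the simple summands by support and shift is compatible with semisimplicity of local systems, which is immediate.
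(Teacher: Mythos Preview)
Your proposal is correct, but there is nothing to compare it against: the paper does not prove this statement. Theorem~\ref{bbdg} is quoted as the decomposition theorem of Beilinson, Bernstein, Deligne, and Gabber, with a citation to \cite{bei.ber.del:82} and a pointer to \cite[Section~1.6]{dec:17} for the precise formulation used; no argument is given in the paper itself. Your derivation from the standard BBDG statement---shifting $\underline{\mathbb{C}}_X$ to make it perverse, splitting into perverse cohomology sheaves, decomposing each into simple IC sheaves, and regrouping by support and shift---is exactly the translation one would make, and your remarks about normalization and stratification are accurate bookkeeping.
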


Fixing a basepoint $s\in S$ gives an equivalence between the category of local systems on $S$ and the category of finite-dimensional representations of $\pi_1(S,s)$ \cite[Theorem 13.2.3]{ara:12}. We write $L_s$ for the fiber of the local system $\mathcal{L}$ at $s$.

Fix a stratification
\[\mathfrak{g}=\bigsqcup S\]
into smooth, locally closed subvarieties such that the restriction of the proper morphism $\mu_\mathfrak{b}:\tilde{\mathfrak{g}}\longrightarrow \mathfrak{g}$ to the preimage of any stratum is a locally trivial fibration \cite{ver:76}. Without loss of generality, we take $\mathfrak{g}^{\text{rs}}$ to be the open dense stratum. 

The morphism $\mu_\mathfrak{b}:\tilde{\mathfrak{g}}\longrightarrow\mathfrak{g}$ is small. (For details see \cite[Lecture I]{yun:17}.) Therefore only IC complexes with full support and no shift appear in the decomposition of Theorem \ref{bbdg}, which becomes  
\[\mu_{\mathfrak{b}*}\underline{\mathbb{C}}_{\tilde{\mathfrak{g}}}[-]\cong IC_{\mathfrak{g}^{\text{rs}}}(\mathcal{L}).\]
Since the monodromy action of $\pi_1(\mathfrak{g}^{\text{rs}},s)$ factors through \eqref{factors} and since the restriction of $\mu_{\mathfrak{b}}$ to the regular semisimple locus is a topological Galois cover for the action of the Weyl group, the semisimple local system $\mathcal{L}$ corresponds to the regular representation of $W$ \cite[Section 3.5]{dec:17}. Decomposing further into irreducible local systems, we obtain
\begin{equation}
\label{springer reps}
\mu_{\mathfrak{b}*}\underline{\mathbb{C}}_{\tilde{\mathfrak{g}}}[-]\cong\bigoplus IC_{\mathfrak{g}^{\text{rs}}}(\mathcal{L}_\psi)^{\oplus m_\psi}.
\end{equation}
Here the sum is taken over all irreducible representations $\psi$ of $W$, $\mathcal{L}_\psi$ is the local system corresponding to the irreducible representation $\psi$, and $m_\psi$ is the multiplicity of $\psi$ in the regular representation, which is equal to the dimension of $\psi$.  

\subsection{The Fourier transform}
Let $\mathcal{V}$ be a vector bundle over a smooth algebraic variety. A complex $\mathcal{F}\in D(\mathcal{V})$ is called \emph{monodromic} if its cohomology sheaves are locally constant along the orbits of the natural $\mathbb{C}^*$-action on $\mathcal{V}$. We write $D_{\text{mon}}(\mathcal{V})$ for the full subcategory of $D(\mathcal{V})$ consisting of monodromic complexes. 

There is a notion of Fourier transform for monodromic complexes \cite[Section 8]{gin:98} which gives a functor
\[\mathfrak{F}:D_{\text{mon}}(\mathcal{V})\longrightarrow D_{\text{mon}}(\mathcal{V}^*),\]
where $\mathcal{V}^*$ is the dual vector bundle. This functor induces an equivalence between the subcategories of monodromic perverse sheaves. In particular, if $\mathcal{W}$ is a subbundle of $\mathcal{V}$ and $\mathcal{W}^\perp\subset\mathcal{V}^*$ is its annihilator,
\begin{equation}
\label{perps}
\mathfrak{F}\left(\underline{\mathbb{C}}_{\mathcal{W}}[-]\right)\cong\underline{\mathbb{C}}_{\mathcal{W}^\perp}[-].
\end{equation}

Identifying $\mathfrak{g}\cong\mathfrak{g}^*$ via the Killing form, we obtain functors
\begin{align*}
&\mathfrak{F}:D_{\text{mon}}(G/B\times\mathfrak{g})\longrightarrow D_{\text{mon}}(G/B\times\mathfrak{g}),\\
&\mathfrak{F}:D_{\text{mon}}(\mathfrak{g})\longrightarrow D_{\text{mon}}(\mathfrak{g}).
\end{align*}
Recall that the Grothendieck--Springer resolution is a vector subbundle 
\begin{equation*}
\begin{tikzcd}[row sep=huge]
\tilde{\mathfrak{g}}=G\times^B\mathfrak{b}		\arrow[hookrightarrow]{r}\arrow{rd}		&G/B\times\mathfrak{g}	\arrow{d} \\
																		&G/B.
\end{tikzcd}
\end{equation*}
The annihilator of $\mathfrak{b}$ under the Killing form is the nilradical $\mathfrak{n}=[\mathfrak{b},\mathfrak{b}]$, and the annihilator of the vector bundle $\tilde{\mathfrak{g}}$ is the bundle
\[\tilde{\mathcal{N}}=G\times^B\mathfrak{n}\longrightarrow G/B.\]
There is a commutative diagram
\begin{equation*}
\begin{tikzcd}[row sep=huge]
\tilde{\mathfrak{g}}		\arrow[hookrightarrow]{r}\arrow{d}{\mu_\mathfrak{b}}		&G/B\times\mathfrak{g}	\arrow{d}{\mu}\arrow[hookleftarrow]{r} 	&\tilde{\mathcal{N}}	\arrow{d}{\mu_{\mathfrak{n}}}\\
\mathfrak{g}  		\arrow[equal]{r}								&\mathfrak{g}			\arrow[hookleftarrow]{r}				&\mathcal{N}.
\end{tikzcd}
\end{equation*}
Here $\mu$ is projection onto the second factor, $\mathcal{N}$ is the nilpotent cone of $\mathfrak{g}$, and $\mu_{\mathfrak{n}}$ is the Springer resolution. The morphism $\mu$ is $\mathbb{C}^*$-equivariant, so the derived pushforward gives a functor
\[\mu_{*}:D_{\text{mon}}(G/B\times\mathfrak{g})\longrightarrow D_{\text{mon}}(\mathfrak{g}).\]
The Fourier transform commutes with this functor \cite[Claim 8.4]{gin:98}, and equation \eqref{perps} implies that  
\begin{equation}
\label{fourier equivalence}
\mathfrak{F}\left(\mu_{\mathfrak{b}*}\underline{\mathbb{C}}_{\tilde{\mathfrak{g}}}[-]\right)\cong\mu_{\mathfrak{n}*}\mathfrak{F}\left(\underline{\mathbb{C}}_{\tilde{\mathfrak{g}}}[-]\right)\cong\mu_{\mathfrak{n}*}\underline{\mathbb{C}}_{\tilde{\mathcal{N}}}[-].
\end{equation}

The nilpotent cone is stratified by $G$-orbits into smooth, locally closed, $\mathbb{C}^*$-stable subvarieties along which the Springer morphism $\mu_\mathfrak{n}$ is locally trivial. Because $\mu_\mathfrak{n}$ is semismall, the derived pushforward $\mu_{\mathfrak{n}*}\underline{\mathbb{C}}_{\tilde{\mathcal{N}}}[-]$ is a perverse sheaf. (Once again we refer to \cite[Lecture I]{yun:17}.) The decomposition theorem gives an isomorphism
\[\mu_{\mathfrak{n}*}\underline{\mathbb{C}}_{\tilde{\mathcal{N}}}[-]\cong\bigoplus_{\mathcal{O}} IC_{\mathcal{O}}\left(\mathcal{M}_{\mathcal{O}}\right),\]
where each $\mathcal{O}\subset\mathcal{N}$ is a nilpotent $G$-orbit and $\mathcal{M}_\mathcal{O}$ is a semisimple local system on $\mathcal{O}$. 

Since $\mathcal{O}\cong G/G_e$, there is a long exact sequence of homotopy groups
\[\ldots\longrightarrow\pi_1(G)\longrightarrow\pi_1(\mathcal{O})\longrightarrow\pi_0(G_e)\longrightarrow\pi_0(G)\longrightarrow\ldots.\]
Because the group $G$ is connected and simply-connected, this sequence gives an isomorphism $\pi_1(\mathcal{O})\cong\pi_0(G_e)$. The fiber $M_{\mathcal{O},e}$ is therefore a representation of the group $\pi_0(G_e)$ of connected components of the centralizer of $e$.

By equations \eqref{springer reps} and \eqref{fourier equivalence},  
\[\mu_{\mathfrak{n}*}\underline{\mathbb{C}}_{\tilde{\mathcal{N}}}[-]\cong\bigoplus \mathfrak{F}(IC_{\mathfrak{g}^{\text{rs}}}(\mathcal{L}_\psi))^{\oplus m_\psi}.\]
For each index $\psi$, the Fourier transform $\mathfrak{F}(IC_{\mathfrak{g}^{\text{rs}}}(\mathcal{L}_\psi))$ is an irreducible perverse sheaf on $\mathcal{N}$. In other words, $\mathfrak{F}(IC_{\mathfrak{g}^{\text{rs}}}(\mathcal{L}_\psi))=IC_{\mathcal{O}_\psi}(\mathcal{M}_\psi)$ for some nilpotent orbit $\mathcal{O}_\psi$ equipped with an irreducible local system $\mathcal{M}_\psi$. 

The \emph{Springer correspondence} is the assignment
\begin{equation}
\label{actual corresp}
\psi\longmapsto  IC_{\mathcal{O}_\psi}(\mathcal{M}_\psi),
\end{equation}
which associates to each irreducible representation of the Weyl group this unique irreducible nilpotent orbital complex. 

\begin{remark}
\label{type A springer}
In type $A$, both irreducible representations of the symmetric group $S_n$ and nilpotent orbits in $\mathfrak{sl}_n$ are indexed by partitions of $n$. The Springer correspondence is a geometric realization of this bijection. 

Concretely, suppose that $G=SL_n$. The actions of $G$ on $\tilde{\mathcal{N}}$ and on $\mathcal{N}$ extend naturally to actions of $GL_n$, and the morphism
\[\mu_{\mathfrak{n}}:\tilde{\mathcal{N}}\longrightarrow\mathcal{N}\]
is $GL_n$-equivariant. Since the centralizer in $GL_n$ of any element in $\mathfrak{gl}_n$ is connected, the monodromy action of $\pi_1(\mathcal{O})\cong\pi_0(G_e)$ on $H^*\left(\mathcal{B}_e\right)$ is trivial. It follows that only sheaves $IC_{\mathcal{O}}(\mathcal{M})$ with $\mathcal{M}$ a trivial local system appear in the Springer correspondence in this case.

For any partition $\lambda$ of $n$, let $\lambda'$ be the dual partition. Write $\psi_{\lambda}$ for the irreducible representation of $S_n$ corresponding to $\lambda$, and $\mathcal{O}_{\lambda}$ for the orbit of nilpotent elements whose Jordan normal form is indexed by $\lambda$. The Springer correspondence \eqref{actual corresp} then maps
\[\psi_{\lambda'}\longmapsto IC_{\mathcal{O}_{\lambda}}(\underline{\mathbb{C}}_{\mathcal{O}_{\lambda}}).\]
See \cite[Section 1.5.16]{yun:17}, for example, for details.
\end{remark}

%
%
%
%
%
%
%
\section{Applications to the universal family of Hessenberg varieties}
\label{what appears}
We apply the tools of the previous section to the universal family of Hessenberg varieties. Consider once again the morphism
\[\mu_H:\, G\times^BH\longrightarrow\mathfrak{g},\]
which is proper by Remark \ref{properness}. The decomposition theorem gives
\begin{equation*}
\mu_{H*}\underline{\mathbb{C}}_{G\times^BH}[-]\cong\bigoplus_{S,b} IC_S\left(\mathcal{L}_{S,b}\right)[-b].
\end{equation*}

The generic fiber of $\mu_H$ has dimension $l=\dim(H/\mathfrak{b})$, and the restriction of $\mu_H$ to the regular semisimple locus $G\times^BH^{\text{rs}}$ is a smooth morphism \cite[Theorem 6]{dem.pro.sha:92}. The decomposition theorem implies that there is an isomorphism in the derived category
\begin{equation}
\label{ics}
\mu_{H*}\underline{\mathbb{C}}_{G\times^BH^{\text{rs}}}[-]\cong\bigoplus_{b=-l}^l \mathcal{H}_b[\dim \mathfrak{g}-b],
\end{equation}
where $\mathcal{H}_b$ is the semisimple local system on $\mathfrak{g}^{\text{rs}}$ whose fiber at $s\in\mathfrak{g}^{\text{rs}}$ is the singular cohomology group $H^{b+l}(\Hess(s,H)).$

\begin{proposition}
\label{appears}
Each local system $\mathcal{H}_{b}$ is a direct sum 
\[\mathcal{H}_b=\bigoplus\mathcal{L}_\psi^{\oplus m_{\psi,b}}\]
of local systems which appear in the decomposition \eqref{springer reps} corresponding to the Grothendieck-Springer resolution.
\end{proposition}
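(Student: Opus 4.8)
The plan is to leverage Proposition~\ref{monodromy factors}: the monodromy action of $\pi_1(\mathfrak{g}^{\text{rs}},s)$ on $H^*(\Hess(s,H))$ factors through the surjection \eqref{factors} onto $W$. The point is that the local system $\mathcal{H}_b$ of \eqref{ics} is, by construction, the derived pushforward sheaf $R^{b+l}\mu_{H*}\underline{\mathbb{C}}_{G\times_B H^{\text{rs}}}$ on $\mathfrak{g}^{\text{rs}}$, so its monodromy representation is precisely the $\pi_1(\mathfrak{g}^{\text{rs}},s)$-action appearing in that proposition.

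First I would fix a regular element $s\in\mathfrak{t}$ as basepoint; this is harmless since $\mathfrak{g}^{\text{rs}}$ is connected and every regular semisimple element is $G$-conjugate to one in $\mathfrak{t}^{\text{r}}$. By Remark~\ref{equivalence}, the local system $\mathcal{H}_b$ (which is semisimple, being a summand in the decomposition theorem, or by Deligne semisimplicity for the smooth projective morphism $\mu_H$ over $\mathfrak{g}^{\text{rs}}$) corresponds to the finite-dimensional semisimple $\pi_1(\mathfrak{g}^{\text{rs}},s)$-representation $H^{b+l}(\Hess(s,H))$. By Proposition~\ref{monodromy factors} this representation factors through \eqref{factors}, hence descends to a semisimple representation of $W$. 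Decomposing that representation into irreducibles $\psi$ and translating back through Remark~\ref{equivalence} yields $\mathcal{H}_b\cong\bigoplus \mathcal{L}_\psi^{\oplus m_{\psi,b}}$, where $\mathcal{L}_\psi$ is exactly the irreducible local system on $\mathfrak{g}^{\text{rs}}$ attached to $\psi$ in the paragraph preceding \eqref{springer reps}.

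To finish, I would note that in \eqref{springer reps} every multiplicity $m_\psi$ is nonzero — as already recorded there, since $\mathcal{L}$ has fiber $H^*(\mathcal{B}_s)$, which for regular semisimple $s$ is the regular representation of $W$, in which each irreducible $\psi$ occurs. Therefore every $\mathcal{L}_\psi$ occurring in the decomposition of $\mathcal{H}_b$ is among the local systems appearing in \eqref{springer reps}, which is the claim.

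The step needing the most care is the identification of the two monodromy actions: one must confirm that the $\pi_1(\mathfrak{g}^{\text{rs}},s)$-action on $H^{b+l}(\Hess(s,H))$ carried by the summand $\mathcal{H}_b$ of the decomposition theorem for $\mu_H$ is literally the fiber-transport (monodromy) action of the locally trivial fibration $\mu_H:G\times_B H^{\text{rs}}\longrightarrow\mathfrak{g}^{\text{rs}}$ that underlies Proposition~\ref{monodromy factors}. This is essentially tautological once one recalls that $\mathcal{H}_b=R^{b+l}\mu_{H*}\underline{\mathbb{C}}|_{\mathfrak{g}^{\text{rs}}}$ and that the monodromy of a locally constant higher direct image is the fiber-transport action, but it is the point at which the various identifications should be made explicit.
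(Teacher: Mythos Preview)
Your proposal is correct and follows essentially the same argument as the paper: decompose the semisimple local system $\mathcal{H}_b$ via the equivalence of Remark~\ref{equivalence}, invoke Proposition~\ref{monodromy factors} to see that the resulting $\pi_1(\mathfrak{g}^{\text{rs}},s)$-representation factors through $W$, and conclude that each irreducible summand is some $\mathcal{L}_\psi$. The paper's proof is just a slightly terser version of what you wrote, omitting your final paragraph about identifying the two monodromy actions (which it treats as tautological) and the remark that every $m_\psi$ in \eqref{springer reps} is nonzero (which it already recorded when introducing that decomposition).
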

\begin{proof}
The local system $\mathcal{H}_b$ decomposes as a sum of simple local systems
\[\mathcal{H}_{b}=\bigoplus \mathcal{M}_j,\]
with fibers $M_{j,s}$ which are irreducible representations of $\pi_1(\mathfrak{g}^{\text{rs}},s)$. There is a $\pi_1(\mathfrak{g}^{\text{rs}},s)$-equivariant inclusion
\[M_{j,s}\hooklongrightarrow H^{b+l}(\Hess(s,H)).\]
By Proposition \ref{monodromy factors} the action of $\pi_1(\mathfrak{g}^{\text{rs}},s)$ on $H^{b+l}(\Hess(s,H))$ factors through \eqref{factors}. It follows that $\pi_1(\mathfrak{g}^{\text{rs}},s)$ acts on $M_{j,s}$ through an irreducible $W$-representation $\psi$, and therefore $\mathcal{M}_j\cong\mathcal{L}_\psi$.
\end{proof}

We will apply the Fourier transform to the vector bundle $G\times^BH\longrightarrow G/B$. Let $H^\perp\subset\mathfrak{g}$ be the annihilator of $H$ under the Killing form. We have a commutative diagram
\begin{equation*}
\label{two sides}
\begin{tikzcd}[row sep=huge]
G\times^BH		\arrow[hookrightarrow]{r}\arrow{d}{\mu_H}		&G/B\times\mathfrak{g}	\arrow{d}{\mu}\arrow[hookleftarrow]{r} 	&G\times^BH^\perp	\arrow{d}{\mu_{H^\perp}}\\
\mathfrak{g}  		\arrow[equal]{r}								&\mathfrak{g}			\arrow[hookleftarrow]{r}				&\overline{\mathcal{O}_H}.
\end{tikzcd}
\end{equation*}
Since the Hessenberg subspace $H$ contains the fixed positive Borel $\mathfrak{b}$, $H^\perp$ is contained in the nilradical $\mathfrak{n}$. Therefore the image of the morphism $\mu_{H^\perp}$ is contained in the nilpotent cone. This image is irreducible and $G$-stable, and it is closed because $\mu_{H^\perp}$ is proper by Remark \ref{properness}. Therefore it is the closure of a single nilpotent $G$-orbit $\mathcal{O}_H\subset\mathcal{N}$.

\begin{remark}
Suppose that $H\neq\mathfrak{b}$. In this case the orbit $\mathcal{O}_H$ is always non-regular. The vector bundle $G\times^BH^\perp$ is a subbundle of $G\times^B\mathfrak{n}$, and there is a commutative diagram
\begin{equation*}
\begin{tikzcd}[row sep=huge]
G\times^BH^\perp		\arrow[hookrightarrow]{r}\arrow{d}{\mu_{H^\perp}}	&G\times^B\mathfrak{n}=\tilde{\mathcal{N}}	\arrow{d}{\mu_{\mathfrak{n}}}\\
\overline{\mathcal{O}_H}	\arrow[hookrightarrow]{r}						&\mathcal{N}.
\end{tikzcd}
\end{equation*}
This diagram is not Cartesian, and $\mu_{H^\perp}$ always fails to be semismall at $0$ because
\[\codim_{\overline{\mathcal{O}_H}}\{0\}=\dim\mathcal{O}_H<\dim\mathcal{N}=2\dim\mathcal{B}=2\dim\mu_{H^\perp}^{-1}(0).\]
\end{remark}

Recall that the Springer correspondence \eqref{actual corresp} assigns to each irreducible representation $\psi$ of $W$ an irreducible nilpotent orbital complex $IC_{\mathcal{O}_\psi}(\mathcal{M}_\psi)$. We use the Fourier transform to give a necessary condition for an irreducible representation $\psi$ to appear in the dot action.

\begin{theorem}
\label{summands}
Let $s\in\mathfrak{t}$ be a regular element. Suppose that the irreducible $W$-representation $\psi$ appears as a subrepresentation of $H^*(\Hess(s,H))$ under the dot action. Then the intersection $\mathcal{O}_\psi\cap H^\perp$ is nonempty.
\end{theorem}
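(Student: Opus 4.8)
The plan is to combine the monodromy description of the dot-action from Section~\ref{monodromy section} with the Fourier-transform dictionary of Section~\ref{decomp section}, turning the hypothesis on $\psi$ into a statement about the support of the perverse sheaf $IC_{\mathcal{O}_\psi}(\mathcal{M}_\psi)$ on the nilpotent cone. First I would reformulate the hypothesis. By Corollary~\ref{coincidence} the Tymoczko dot-action on $H^*(\Hess(s,H))$ agrees with the monodromy action of $W$, and the graded pieces $H^{b+l}(\Hess(s,H))$ are exactly the fibers of the local systems $\mathcal{H}_b$ in \eqref{ics}. So if $\psi$ occurs in $H^*(\Hess(s,H))$ then—using Proposition~\ref{appears}, which is what guarantees that each $\mathcal{H}_b$ is a sum of $\mathcal{L}_\psi$'s—the irreducible local system $\mathcal{L}_\psi$ occurs as a summand of $\mathcal{H}_{b_0}$ for some $b_0$. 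Next I would upgrade this to the global decomposition $\mu_{H*}\underline{\mathbb{C}}_{G\times_BH}[-]\cong\bigoplus_{S,b}IC_S(\mathcal{L}_{S,b})[-b]$: restricting to the open dense stratum $\mathfrak{g}^{\text{rs}}$ kills every summand with $S\neq\mathfrak{g}^{\text{rs}}$, so comparison with \eqref{ics} forces $IC_{\mathfrak{g}^{\text{rs}}}(\mathcal{H}_b)$—and in particular $IC_{\mathfrak{g}^{\text{rs}}}(\mathcal{L}_\psi)$—to appear, up to shift, as a summand of $\mu_{H*}\underline{\mathbb{C}}_{G\times_BH}[-]$.

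Then I would apply the Fourier transform exactly as in the derivation of \eqref{fourier equivalence}, but with $\mathfrak{b},\mathfrak{n}$ replaced by $H,H^\perp$. Since $\mu_H$ is $\mathbb{C}^*$-equivariant its pushforward is monodromic, and combining the fact that $\mathfrak{F}$ commutes with $\mu_*$ with the identity \eqref{perps} yields $\mathfrak{F}(\mu_{H*}\underline{\mathbb{C}}_{G\times_BH}[-])\cong\mu_{H^\perp*}\underline{\mathbb{C}}_{G\times_BH^\perp}[-]$. Because $\mathfrak{F}$ restricts to an equivalence of the categories of monodromic perverse sheaves, it is additive, commutes with shifts, and carries simple perverse sheaves to simple perverse sheaves; applying it to the decomposition on the left and comparing with that of the right, the summand $\mathfrak{F}(IC_{\mathfrak{g}^{\text{rs}}}(\mathcal{L}_\psi))=IC_{\mathcal{O}_\psi}(\mathcal{M}_\psi)$—which is precisely the defining property \eqref{actual corresp} of the Springer correspondence—must occur, up to shift, as a summand of $\mu_{H^\perp*}\underline{\mathbb{C}}_{G\times_BH^\perp}[-]$.

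The final step is purely geometric. The complex $\mu_{H^\perp*}\underline{\mathbb{C}}_{G\times_BH^\perp}[-]$ is supported on the image of $\mu_{H^\perp}$, which is the set $G\cdot H^\perp$ of all $G$-translates of elements of $H^\perp$ and which is closed by the properness noted in Remark~\ref{properness}. Hence any summand of it, and in particular $IC_{\mathcal{O}_\psi}(\mathcal{M}_\psi)$, whose support is $\overline{\mathcal{O}_\psi}$, is supported inside $G\cdot H^\perp$, so $\mathcal{O}_\psi\subseteq G\cdot H^\perp$. Choosing any $y\in\mathcal{O}_\psi$ and writing $y=g\cdot x$ with $g\in G$ and $x\in H^\perp$, the element $x=g^{-1}\cdot y$ again lies in the $G$-stable orbit $\mathcal{O}_\psi$, so $x\in\mathcal{O}_\psi\cap H^\perp$ and the intersection is nonempty. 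I expect the only genuine care to be needed in the middle step—tracking which simple summands survive restriction to $\mathfrak{g}^{\text{rs}}$ and transporting the relation ``occurs as a direct summand'' cleanly across the Fourier equivalence—since Sections~\ref{monodromy section} and \ref{decomp section} already supply all the substantive input.
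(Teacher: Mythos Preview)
Your argument is correct and follows essentially the same route as the paper's proof: identify the dot-action with monodromy via Corollary~\ref{coincidence}, deduce that $\mathcal{L}_\psi$ is a summand of some $\mathcal{H}_b$, apply the Fourier transform to pass from $\mu_{H*}\underline{\mathbb{C}}_{G\times_BH}[-]$ to $\mu_{H^\perp*}\underline{\mathbb{C}}_{G\times_BH^\perp}[-]$, and read off the support constraint on $IC_{\mathcal{O}_\psi}(\mathcal{M}_\psi)$. You are simply more explicit than the paper in two places it leaves implicit---the passage from ``$\mathcal{L}_\psi$ is a summand of $\mathcal{H}_b$'' to ``$IC_{\mathfrak{g}^{\text{rs}}}(\mathcal{L}_\psi)$ is a summand of the global pushforward,'' and the translation of $\mathcal{O}_\psi\subseteq G\cdot H^\perp$ into $\mathcal{O}_\psi\cap H^\perp\neq\varnothing$.
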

\begin{proof}
Suppose that $\psi$ is an irreducible summand of $H^*(\Hess(s,H))$ under the dot action. By Corollary \ref{coincidence}, this action coincides with the monodromy action of $W$. It follows that the local system $\mathcal{L}_\psi$ is a direct summand of $\mathcal{H}_b$ for some index $b$. 

The Fourier transform gives an identification
\[\mathfrak{F}\left(\mu_{H*}\underline{\mathbb{C}}_{G\times^BH}[-]\right)\cong\mu_{H^\perp*}\underline{\mathbb{C}}_{G\times^BH^\perp}[-],\]
so the complex $\mathfrak{F}(IC_{\mathfrak{g}^{\text{rs}}}(\mathcal{L}_\psi))=IC_{\mathcal{O}_\psi}(\mathcal{M}_\psi)$ is a direct summand of $\mu_{H^\perp*}\underline{\mathbb{C}}_{G\times^BH^\perp}[-]$. Therefore its support is contained in $\overline{\mathcal{O}_H}=G\cdot H^\perp$. Equivalently, $\mathcal{O}_\psi\cap H^\perp$ is nonempty.
\end{proof}

In view of Remark \ref{type A springer}, Theorem \ref{summands} has the following corollary in type $A$. 

\begin{corollary}
\label{type-a-cor}
Suppose that $G=SL_n$. Let $\lambda$ be a partition of $n$, and suppose that $\psi_{\lambda}$ appears as a subrepresentation of $H^*(\Hess(s,H))$. Then $H^\perp$ contains a nilpotent element whose Jordan normal form corresponds to $\lambda'$.
\end{corollary}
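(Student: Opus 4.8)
The plan is to deduce this immediately from Theorem~\ref{summands} together with the explicit description of the Springer correspondence in type $A$ recorded in Remark~\ref{type A springer}. First I would apply Theorem~\ref{summands} with $\psi=\psi_{[\lambda]}$: since $\psi_{[\lambda]}$ appears as a subrepresentation of $H^*(\Hess(s,H))$ under the Tymoczko dot-action, the theorem yields that the intersection $\mathcal{O}_{\psi_{[\lambda]}}\cap H^\perp$ is nonempty, where $\mathcal{O}_{\psi_{[\lambda]}}$ is the nilpotent orbit attached to $\psi_{[\lambda]}$ by the Springer correspondence \eqref{actual corresp}.

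Next I would identify the orbit $\mathcal{O}_{\psi_{[\lambda]}}$ concretely. By Remark~\ref{type A springer}, when $G=SL_n$ every centralizer in $GL_n$ is connected, so all local systems occurring in the Springer correspondence are trivial and the correspondence realizes the classical bijection between partitions of $n$ and nilpotent orbits in $\mathfrak{sl}_n$; under it $\psi_{[\lambda]}$ is sent to $IC_{\mathcal{O}_{[\lambda]}}(\underline{\mathbb{C}}_{\mathcal{O}_{[\lambda]}})$, where $\mathcal{O}_{[\lambda]}$ is the orbit of nilpotent elements whose Jordan normal form corresponds to $[\lambda]$. In particular $\mathcal{O}_{\psi_{[\lambda]}}=\mathcal{O}_{[\lambda]}$. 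Combining the two steps gives $\mathcal{O}_{[\lambda]}\cap H^\perp\neq\emptyset$, which is exactly the assertion that $H^\perp$ contains a nilpotent element with Jordan type $[\lambda]$.

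I do not expect any genuine obstacle: the whole content of the statement is already packaged in Theorem~\ref{summands} and in the classical fact, recalled in Remark~\ref{type A springer}, that the type-$A$ Springer correspondence matches the partition indexing an irreducible $S_n$-representation with the partition recording the Jordan type of the associated nilpotent orbit. The only point requiring care is bookkeeping: there are two a priori different roles played by partitions of $n$ here, one on the representation side and one on the orbit side, and one must make sure they are identified via the \emph{same} Springer bijection, which is precisely what Remark~\ref{type A springer} supplies.
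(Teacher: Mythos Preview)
Your argument is correct and matches the paper's own approach exactly: the paper states the corollary without a separate proof, simply noting that it follows from Theorem~\ref{summands} ``in view of Remark~\ref{type A springer}'', which is precisely the two-step deduction you spell out.
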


\begin{remark}
Though we were unable to find a reference in the existing literature, Corollary \ref{type-a-cor} can also be proved combinatorially using \cite[Theorem 129]{bro.cho:18} and \cite[Theorem 4]{gas:96}, and appears to be known to experts. Moreover, Martha Precup has explained to us that its converse follows from her recent work with Ji \cite[Lemma 6.1]{pre.ji:19} and Gasharov's Schur expansion of the chromatic symmetric function \cite{gas:96}, which is refined in \cite[Section 6]{sha.wac:16}.
\end{remark}

In type $A$ we can also use the Fourier transform to prove a support theorem for the universal Hessenberg family
\[\mu_H:G\times^BH\longrightarrow\mathfrak{g}.\]

\begin{theorem}
\label{main}
Suppose that $G=SL_n$. There is an isomorphism
\[\mu_{H*}\underline{\mathbb{C}}_{G\times^BH}[-]\cong\bigoplus IC_{\mathfrak{g}^{\text{\emph{rs}}}}(\mathcal{H}_b)[-b]\]
in the derived category, where the local systems $\mathcal{H}_b$ are as defined in \eqref{ics}. In particular, every irreducible summand of $\mu_{H*}\underline{\mathbb{C}}_{G\times^BH}[-]$ has full support.
\end{theorem}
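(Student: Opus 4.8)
The plan is to reduce the statement to the analogous property of the Springer sheaf $\mu_{\mathfrak{n}*}\underline{\mathbb{C}}_{\tilde{\mathcal{N}}}[-]$ via the Fourier transform, using the special feature of type $A$ that all local systems appearing in the Springer correspondence are trivial (Remark \ref{type A springer}). First I would apply the Fourier transform to the vector bundle $G\times_B H\longrightarrow G/B$ together with the compatibility of $\mathfrak{F}$ with $\mu_*$ (as in \eqref{fourier equivalence}), obtaining
\[
\mathfrak{F}\left(\mu_{H*}\underline{\mathbb{C}}_{G\times_BH}[-]\right)\cong\mu_{H^\perp*}\underline{\mathbb{C}}_{G\times_BH^\perp}[-].
\]
Since $\mathfrak{F}$ is an equivalence on monodromic perverse sheaves and hence preserves the property of having full support $\mathfrak{g}^{\text{rs}}$ on one side versus support inside $\overline{\mathcal{O}_H}\subset\mathcal{N}$ on the other, the support theorem for $\mu_H$ is equivalent to the statement that every irreducible summand of $\mu_{H^\perp*}\underline{\mathbb{C}}_{G\times_BH^\perp}[-]$ is supported on all of $\mathfrak{g}^{\text{rs}}$ after transform — equivalently, by \eqref{springer reps} and Proposition \ref{appears}, that each $IC_{\mathcal{O}_\psi}(\mathcal{M}_\psi)$ appearing in $\mu_{H^\perp*}\underline{\mathbb{C}}_{G\times_BH^\perp}[-]$ is precisely the Fourier transform of an $IC_{\mathfrak{g}^{\text{rs}}}(\mathcal{L}_\psi)$ — which is automatic once we know $\mathcal{H}_b$ is a sum of the $\mathcal{L}_\psi$'s (Proposition \ref{appears}).

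The real content, then, is to upgrade the abstract decomposition \eqref{ics}, which a priori only holds over $\mathfrak{g}^{\text{rs}}$, to an honest isomorphism over all of $\mathfrak{g}$ with no extra summands of smaller support. I would argue as follows. By the decomposition theorem (Theorem \ref{bbdg}) applied to the proper morphism $\mu_H$, we have $\mu_{H*}\underline{\mathbb{C}}_{G\times_BH}[-]\cong\bigoplus_{S,b}IC_S(\mathcal{L}_{S,b})[-b]$. Restricting to the open stratum $\mathfrak{g}^{\text{rs}}$ recovers \eqref{ics}, so the summands $IC_{\mathfrak{g}^{\text{rs}}}(\mathcal{H}_b)[-b]$ all occur; it remains to show no summand $IC_S(\mathcal{L})$ with $S\subsetneq\mathfrak{g}^{\text{rs}}$ (i.e. $S\not\supseteq\mathfrak{g}^{\text{rs}}$, so $\overline{S}\neq\mathfrak{g}$) appears. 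Apply $\mathfrak{F}$: each such summand goes to a monodromic irreducible perverse sheaf on $\mathfrak{g}$, and by \eqref{perps} applied to the inclusion $G\times_BH^\perp\hookrightarrow G/B\times\mathfrak{g}$ together with the compatibility with $\mu_*$, the whole complex $\mathfrak{F}(\mu_{H*}\underline{\mathbb{C}}_{G\times_BH}[-])$ equals $\mu_{H^\perp*}\underline{\mathbb{C}}_{G\times_BH^\perp}[-]$, which is supported on the nilpotent cone. Hence every summand of $\mu_{H*}\underline{\mathbb{C}}_{G\times_BH}[-]$, after transform, is a nilpotent orbital complex $IC_{\mathcal{O}}(\mathcal{M})$. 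Now invoke the key type-$A$ fact: by Remark \ref{type A springer}, in $\mathfrak{gl}_n$ all centralizers are connected, so every $\mathcal{M}$ is trivial, and by the (Fourier-theoretic) Springer correspondence \eqref{actual corresp} every irreducible nilpotent orbital complex $IC_{\mathcal{O}}(\underline{\mathbb{C}}_{\mathcal{O}})$ is the Fourier transform of some $IC_{\mathfrak{g}^{\text{rs}}}(\mathcal{L}_\psi)$. Applying $\mathfrak{F}^{-1}$ (which is $\mathfrak{F}$ up to sign and duality), every summand of the original complex is of the form $IC_{\mathfrak{g}^{\text{rs}}}(\mathcal{L}_\psi)[-b]$ — i.e. has full support. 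Combining with the identification of multiplicities from \eqref{ics} gives the displayed isomorphism.

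I would carry out the steps in this order: (i) record the Fourier identity $\mathfrak{F}(\mu_{H*}\underline{\mathbb{C}}_{G\times_BH}[-])\cong\mu_{H^\perp*}\underline{\mathbb{C}}_{G\times_BH^\perp}[-]$, citing \eqref{perps}, \eqref{fourier equivalence}, and \cite[Claim 8.4]{gin:98}; (ii) apply the decomposition theorem to $\mu_{H^\perp}$ to write the right-hand side as a sum of nilpotent orbital complexes $IC_{\mathcal{O}}(\mathcal{M}_{\mathcal{O}})$; (iii) invoke Remark \ref{type A springer} to conclude every $\mathcal{M}_{\mathcal{O}}$ is trivial and every such complex lies in the image of the Springer correspondence, hence is $\mathfrak{F}(IC_{\mathfrak{g}^{\text{rs}}}(\mathcal{L}_\psi))$ for some $\psi$; (iv) apply $\mathfrak{F}^{-1}$ and use \eqref{ics} to pin down which $\mathcal{L}_\psi$ occur and with what multiplicity, yielding $\bigoplus IC_{\mathfrak{g}^{\text{rs}}}(\mathcal{H}_b)[-b]$. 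The main obstacle is step (iii): one must be careful that the summands of $\mu_{H^\perp*}\underline{\mathbb{C}}_{G\times_BH^\perp}[-]$ really are all of the form $IC_{\mathcal{O}}(\mathcal{M})$ for $\mathcal{O}$ a single $G$-orbit in $\mathcal{N}$ — this uses that $\overline{\mathcal{O}_H}$ is $G$-stable and that the whole complex is $G$-equivariant and monodromic, so its simple constituents are $G$-equivariant simple perverse sheaves on $\mathcal{N}$, which are exactly the $IC_{\mathcal{O}}(\mathcal{M})$ with $\mathcal{M}$ an irreducible $G$-equivariant (equivalently, $\pi_0(G_e)$-) local system — and then one needs the type-$A$ input that $\pi_0(G_e)$ is trivial for $G=SL_n$ to kill all non-trivial $\mathcal{M}$, together with the fact that the Springer correspondence is a \emph{bijection} onto the set of pairs $(\mathcal{O},\underline{\mathbb{C}}_{\mathcal{O}})$ in this case, so that no orbital complex lies outside its image. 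Granting these, the Fourier transform transports "full support over $\mathfrak{g}^{\text{rs}}$" and "supported on a nilpotent orbit closure" into one another bijectively, and the support theorem follows.
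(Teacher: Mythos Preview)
Your approach is essentially the paper's: apply $\mathfrak{F}$ to pass to $\mu_{H^\perp*}\underline{\mathbb{C}}_{G\times_BH^\perp}[-]$, decompose into nilpotent orbital complexes, use the type-$A$ Springer correspondence to see that every summand is $\mathfrak{F}(IC_{\mathfrak{g}^{\text{rs}}}(\mathcal{L}_\psi))$, and transform back. The paper's proof is exactly your steps (i)--(iv), in that order.

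One correction, though: your claim that ``$\pi_0(G_e)$ is trivial for $G=SL_n$'' is false --- for instance, for the regular nilpotent $e\in\mathfrak{sl}_n$ one has $\pi_0(G_e)\cong\mathbb{Z}/n\mathbb{Z}$. The correct argument, which you also state (and which is the one the paper uses), is that $\mu_{H^\perp}$ is $GL_n$-equivariant and centralizers in $GL_n$ are connected, so the $\pi_1(\mathcal{O})$-monodromy on $H^*(\mu_{H^\perp}^{-1}(e))$ is trivial; hence only the constant local systems $\underline{\mathbb{C}}_{\mathcal{O}}$ appear, and these are precisely the ones in the image of the Springer correspondence. Drop the incorrect $\pi_0$ claim and the argument is clean.
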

\begin{proof}
Consider once again the $G$-equivariant proper morphism
\[\mu_{H^\perp}:G\times^BH^\perp\longrightarrow\overline{\mathcal{O}_H}.\]
As in Remark \ref{type A springer}, the actions of $G$ on $G\times^B H^\perp$ and on $\overline{\mathcal{O}_H}$ extend to actions of $GL_n$. Let $\mathcal{O}$ be a nilpotent orbit with basepoint $e$. Since centralizers in $GL_n$ are connected, the monodromy action of $\pi_1(\mathcal{O})\cong\pi_0(G_e)$ on $H^*\left(\mu_{H^\perp}^{-1}(e)\right)$ is trivial. 

It follows that each irreducible orbital complex appearing as a summand of $\mu_{H^\perp*}\underline{\mathbb{C}}_{G\times^BH^\perp}[-]$ is trivial. We obtain a decomposition
\[\mu_{H^\perp*}\underline{\mathbb{C}}_{G\times^BH^\perp}[-]\cong\bigoplus_{\mathcal{O},b} IC_{\mathcal{O}}\left(\underline{\mathbb{C}}_\mathcal{O}\right)^{\oplus m_{\mathcal{O},b}}[-b].\]
Applying the Fourier transform, this gives
\begin{equation*}
\mu_{H*}\underline{\mathbb{C}}_{G\times^BH}[-]\cong\bigoplus_{\mathcal{O},b}\mathfrak{F}(IC_O(\underline{\mathbb{C}}_\mathcal{O})[-b])^{\oplus m_{\mathcal{O},b}}.
\end{equation*}

By Remark \ref{type A springer}, every trivial orbital complex $IC_\mathcal{O}(\underline{\mathbb{C}}_\mathcal{O})$ appears in the Springer correspondence. Therefore the Fourier transform of any such complex has full support. The theorem now follows from \eqref{ics}. 
\end{proof}

\begin{remark}
Theorem \ref{main} relates the irreducible summands appearing in the dot action representation to the cohomology of the fibers of the map $\mu_{H^\perp}$, which are a class of ad-nilpotent Hessenberg varieties. This connection is studied in recent work of Precup and Sommers \cite{pre.som:22}, where Theorem \ref{main} is extended to all Lie types.\\
\end{remark}

%
%
%
%
%
%
%
%
%
\section{The local invariant cycle theorem and the K\"ahler package}
\label{locinv}
Suppose that $\varphi:X\longrightarrow Y$ is a proper surjective morphism between smooth algebraic varieties, and let $U\subset Y$ be an open dense subvariety so that $\varphi$ restricts to a smooth morphism along the preimage of $U$. Fix $y\in Y$, and let $D_y\subset Y$ be a sufficiently small Euclidean ball around $y$ such that the restriction
\begin{equation}
\label{linv1}
H^*(\varphi^{-1}(D_y))\longrightarrow H^*(\varphi^{-1}(y))
\end{equation}
is an isomorphism. By the global invariant cycle theorem \cite[Theorem 1.2.2]{dec:17}, for any $u\in U\cap D_y$ the restriction map gives a surjection
\begin{equation}
\label{linv2}
H^*(\varphi^{-1}(U\cap D_y))\longrightarrow H^*(\varphi^{-1}(u))^{\pi_1(U\cap D_y,u)}.
\end{equation}
Composing the inverse of \eqref{linv1} with the natural restriction to $H^*(\varphi^{-1}(U\cap D_y))$ and then with \eqref{linv2}, we obtain a homomorphism of algebras
\[\lambda_y: H^*(\varphi^{-1}(y))\longrightarrow H^*(\varphi^{-1}(u))^{\pi_1(U\cap D_y,u)}.\]
We state a version of the local invariant cycle theorem of Beilinson, Bernstein, and Deligne \cite{bei.ber.del:82}, referring once again to \cite[Section 1.4]{dec:17} for details.

\begin{theorem}[Local Invariant Cycle Theorem]
The map $\lambda_y$ is surjective.
\end{theorem}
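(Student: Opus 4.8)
The plan is to deduce the surjectivity of $\lambda_y$ from the decomposition theorem (Theorem~\ref{bbdg}) applied to $\varphi$. The key idea is that the cohomology of the central fiber $H^*(\varphi^{-1}(y))$ contains, as an honest direct summand produced by the BBD decomposition, a copy of the local monodromy invariants $H^*(\varphi^{-1}(u))^{\pi_1(U\cap D_y,u)}$, and that this particular summand is carried isomorphically onto the target of $\lambda_y$.

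First I would pass to a local situation: shrink $Y$ to the ball $D_y$ and, abusing notation, write $Y=D_y$, keep $U=U\cap D_y$ with open inclusion $j\colon U\hookrightarrow Y$, and fix $u\in U$ so that $\pi_1(U,u)$ is the group in the statement. Since $\varphi$ is proper, proper base change gives $(R^k\varphi_*\underline{\mathbb{C}}_X)_y\cong H^k(\varphi^{-1}(y))$, so using \eqref{linv1} we identify $H^*(\varphi^{-1}(y))=\mathbb{H}^*(Y,\varphi_*\underline{\mathbb{C}}_X)$. Unwinding the construction of $\lambda_y$, in cohomological degree $k$ it is the composite
\[
\mathbb{H}^k(Y,\varphi_*\underline{\mathbb{C}}_X)\xrightarrow{\ j^*\ }H^k(\varphi^{-1}(U))\xrightarrow{\ \mathrm{edge}\ }H^0\!\big(U,R^k\varphi_*\underline{\mathbb{C}}\big),
\]
whose target is exactly $H^k(\varphi^{-1}(u))^{\pi_1(U,u)}$ and whose second map is the surjection \eqref{linv2} (over the smooth locus it is the projection onto a graded piece of the degenerate Leray filtration, which is why \eqref{linv2} is surjective in the first place). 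So it suffices to exhibit a subspace of $\mathbb{H}^k(Y,\varphi_*\underline{\mathbb{C}}_X)$ mapping onto $H^0(U,R^k\varphi_*\underline{\mathbb{C}})$.

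Next I would apply Theorem~\ref{bbdg} to $\varphi$ and restrict to $U$. There $\varphi$ is smooth and proper, so Deligne's degeneration theorem gives $\varphi_*\underline{\mathbb{C}}_{\varphi^{-1}(U)}\cong\bigoplus_b\mathcal{H}^b[-b]$ with $\mathcal{H}^b=R^b\varphi_*\underline{\mathbb{C}}|_U$; comparing the two sides, the full-support summands in the BBD decomposition are precisely $IC_U(\mathcal{H}^b)[-b-\dim Y]$, while every other summand is supported on $Y\setminus U$ and is therefore killed by $j^*$. Passing to $\mathbb{H}^*(Y,-)$, the group $H^k(\varphi^{-1}(y))$ thus contains, as a BBD direct summand, the term $\mathbb{H}^k\big(Y,IC_U(\mathcal{H}^k)[-k-\dim Y]\big)=\mathbb{H}^{-\dim Y}\big(Y,IC_U(\mathcal{H}^k)\big)$. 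I would then compute this term using that $Y$ is smooth, hence normal: by Deligne's inductive construction of the intermediate extension, the lowest nonzero cohomology sheaf of $IC_U(\mathcal{L})$ is $R^0 j_*\mathcal{L}$ near $y$, and over a small ball the lowest-degree hypercohomology of a complex equals the sections, hence the stalk, of its bottom cohomology sheaf; this gives $\mathbb{H}^{-\dim Y}(Y,IC_U(\mathcal{L}))\cong\Gamma(U,\mathcal{L})=(\mathcal{L}_u)^{\pi_1(U,u)}$, compatibly with restriction along $j$. Taking $\mathcal{L}=\mathcal{H}^k$, this summand of $H^k(\varphi^{-1}(y))$ maps isomorphically onto $H^0(U,\mathcal{H}^k)=H^k(\varphi^{-1}(u))^{\pi_1(U,u)}$ under the composite above, and the surjectivity of $\lambda_y$ follows (the total map $\lambda_y$ need not be injective, which is fine).

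The hard part will be the last computation: identifying the bottom cohomology sheaf of $IC_U(\mathcal{L})$ with $R^0 j_*\mathcal{L}$ and checking that restriction along $j$ realizes the isomorphism $\mathcal{H}^{-\dim Y}(IC_U(\mathcal{L}))_y\cong(\mathcal{L}_u)^{\pi_1(U,u)}$, together with the bookkeeping of the several degree shifts and the verification that the composite written above is literally the $\lambda_y$ of the statement. Everything else is formal once one grants the decomposition theorem and the degeneration of the Leray spectral sequence over the smooth locus; in particular this is not an elementary argument, since the genuine depth is already contained in Theorem~\ref{bbdg}.
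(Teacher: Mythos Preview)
The paper does not supply its own proof of the Local Invariant Cycle Theorem: it merely states the result and points to \cite{bei.ber.del:82} and \cite[Section 1.4]{dec:17}. So there is nothing in the paper to compare your argument against directly.

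That said, your sketch is the standard route---deduce surjectivity from the decomposition theorem by isolating the full-support summands $IC_U(\mathcal{H}^b)$, identifying their lowest hypercohomology over the ball with the monodromy invariants via $\mathcal{H}^{-\dim Y}(IC_U(\mathcal{L}))\cong R^0 j_*\mathcal{L}$, and checking this piece surjects under the restriction-then-edge composite. This is exactly the argument one finds in the cited references (see in particular \cite[Section 1.7]{dec:17}), and the points you flag as ``the hard part'' are genuinely the places where care is needed but are handled there. Your proposal is correct and matches the literature the paper defers to.
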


Suppose that $d=\dim X-\dim Y$. We say that the fibers of $\varphi$ have \emph{palindromic} Betti numbers if for every $y\in Y$,
\[\dim H^k(\varphi^{-1}(y))= \dim H^{2d-k}(\varphi^{-1}(y))\qquad\text{for all }k\in\mathbb{Z}.\]
In \cite{bro.cho:18}, Brosnan and Chow showed that there is a remarkable connection between the local invariant cycle map and palindromicity. We briefly recall their results.

\begin{theorem}\cite[Theorem 92 and Theorem 102]{bro.cho:18}
\label{brocho}
Suppose that $\varphi:X\longrightarrow Y$ is a projective, surjective morphism between smooth algebraic varieties. The following are equivalent:
\begin{enumerate}
\item The fibers of $\varphi$ have palindromic Betti numbers.
\item Every irreducible summand appearing in the decomposition of $\varphi_*\underline{\mathbb{C}}_X$ has full support and is concentrated in a single cohomological degree.
\item For every $y\in Y$, the local invariant cycle map $\lambda_y$ is an isomorphism.
\end{enumerate}
\end{theorem}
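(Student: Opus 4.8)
The plan is to treat (2) as the hub: establish $(2)\Rightarrow(1)$ and the harder converse $(1)\Rightarrow(2)$, and separately $(2)\Leftrightarrow(3)$. Write $n=\dim Y$ and $d=\dim X-\dim Y$, let $U\subset Y$ be the dense open locus over which $\varphi$ is smooth, and set $C=\varphi_*\underline{\mathbb C}_X[\dim X]$. Since $X$ is smooth, $\underline{\mathbb C}_X[\dim X]$ is Verdier self-dual, so properness of $\varphi$ gives $\mathbb D C\cong C$; proper base change identifies $H^j(\varphi^{-1}(y))$ with $\mathcal H^{j-\dim X}(i_y^*C)$; and the decomposition theorem together with relative hard Lefschetz writes $C\cong\bigoplus_b P_b[-b]$ with $P_b={}^p\mathcal H^b(C)$ semisimple, $\mathbb D P_b\cong P_{-b}$, and $P_b\cong P_{-b}$. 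Over $U$ one has $P_b=\mathcal H_b[n]$, where $\mathcal H_b$ is the local system with stalk $H^{b+d}(\varphi^{-1}(u))$, so each $P_b$ contains $IC_U(\mathcal H_b)$, and condition (2) says precisely that $P_b=IC_U(\mathcal H_b)$ for all $b$ and that each $IC_U(\mathcal H_b)$ has a single nonvanishing cohomology sheaf, say $IC_U(\mathcal H_b)\cong\mathcal K_b[n]$ with $\mathcal K_b|_U=\mathcal H_b$.

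The implication $(2)\Rightarrow(1)$ is then immediate: since $C\cong\bigoplus_b\mathcal K_b[n-b]$, proper base change gives $H^j(\varphi^{-1}(y))\cong(\mathcal K_{j-d})_y$, while relative hard Lefschetz gives $\mathcal H_b\cong\mathcal H_{-b}$, hence $IC_U(\mathcal H_b)\cong IC_U(\mathcal H_{-b})$, i.e. $\mathcal K_b\cong\mathcal K_{-b}$; therefore $\dim H^{d+k}(\varphi^{-1}(y))=\dim(\mathcal K_k)_y=\dim(\mathcal K_{-k})_y=\dim H^{d-k}(\varphi^{-1}(y))$, which is palindromicity about $d$.

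For $(2)\Leftrightarrow(3)$ the plan is a dimension count. The stalk of $IC_U(\mathcal H_b)$ at a point $y$ in a proper stratum is, in its lowest degree $-n$, the local-monodromy-invariant subspace $\mathcal H_b^{\pi_1(U\cap D_y,\,u)}$, plus correction sheaves in strictly higher degrees supported on $Y\setminus U$; combining this over $b$ with the nonnegative contributions of any proper-support summands yields $\dim H^*(\varphi^{-1}(y))\ge\sum_b\dim\mathcal H_b^{\pi_1(U\cap D_y,\,u)}=\dim H^*(\varphi^{-1}(u))^{\pi_1(U\cap D_y,\,u)}$, with equality exactly when no correction sheaf and no proper-support summand of $C$ is nonzero at $y$. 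Since the local invariant cycle theorem makes $\lambda_y$ a graded surjection, $\lambda_y$ is an isomorphism iff this equality holds; so (3) holds iff the equality holds at every $y$, which is iff (2).

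The hard direction is $(1)\Rightarrow(2)$. Assuming all fibers palindromic about $d$ (so $H^j(\varphi^{-1}(y))=0$ for $j\notin[0,2d]$), suppose (2) fails: then $C$ has a summand $IC_{\overline S}(\mathcal L)$ with $\overline S\subsetneq Y$, or some $IC_U(\mathcal H_b)$ carries a correction sheaf. Among all such data choose one supported on a stratum $S$ of maximal dimension, and among those one of extremal perverse degree $b_0$. Near a generic $y\in S$ only the full-support summands and those supported on $\overline S$ are relevant; the offending piece contributes a nonzero class to $H^{j_0}(\varphi^{-1}(y))$ in a specific degree $j_0$ determined by $b_0$, $\dim S$, and the defect of semismallness of $\varphi$ along $S$, while the strict support conditions bound the degrees of all other contributions. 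Using relative hard Lefschetz (for both the full-support and the $\overline S$-supported packages) and the self-duality $\mathbb D C\cong C$ to locate the palindromic mirror of this class, one arrives at a nonzero class of $H^*(\varphi^{-1}(y))$ in a degree outside $[0,2d]$, contradicting (1). The step I expect to be the main obstacle is exactly this degree count: one must pin down $j_0$ and the perverse amplitudes of the remaining summands in terms of the defect of semismallness along $S$, and rule out that the offending multiplicity is silently cancelled in the mirror degree. It is here that the smoothness of $X$ (hence self-duality of $C$) and the projectivity of $\varphi$ (hence relative hard Lefschetz) are used essentially; the implications $(2)\Rightarrow(1)$ and $(2)\Leftrightarrow(3)$ use only the decomposition theorem and the local invariant cycle theorem already in hand.
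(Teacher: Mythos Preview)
The paper does not contain a proof of this statement. Theorem~\ref{brocho} is quoted from Brosnan and Chow \cite[Theorems~92 and~102]{bro.cho:18} and is used as a black box; the only commentary the paper offers is the remark that the proof in \cite{bro.cho:18} relies on relative hard Lefschetz, which is why projectivity of $\varphi$ is needed. There is therefore nothing in the present paper to compare your proposal against.

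That said, your outline is in the right spirit and is consistent with the one hint the paper gives: you invoke relative hard Lefschetz exactly where it is needed. Your treatments of $(2)\Rightarrow(1)$ and $(2)\Leftrightarrow(3)$ are essentially correct sketches. For $(1)\Rightarrow(2)$ you have honestly flagged the gap yourself: the extremal--stratum/extremal--degree argument is the right shape, but the degree bookkeeping that forces a nonzero class outside $[0,2d]$ is the entire content of the argument and you have not carried it out. If you want to complete this direction, you will have to make precise the interplay between the perverse degree $b_0$, the codimension of $S$, and the support conditions on $IC$ sheaves, and then check that self-duality of $C$ prevents any cancellation in the mirror degree; this is exactly what \cite[\S5]{bro.cho:18} does, and it is not short.
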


\begin{remark}
While the local invariant cycle theorem applies to any proper morphism, the proof of Theorem \ref{brocho} relies on the relative hard Lefschetz property, which only holds when $\varphi$ is projective.
\end{remark}

We will consider the family
\[\mu_H:\, G\times^BH^{\text{r}}\longrightarrow\mathfrak{g}^{\text{r}},\]
of regular Hessenberg varieties, which is the restriction of $\mu_H$ to the regular locus. In \cite{bro.cho:18}, Brosnan and Chow showed that, when $\mathfrak{g}$ is of type $A$, the fibers of this family have palindromic Betti numbers. In \cite{pre:17}, Precup generalized this to all semisimple Lie algebras by using explicit affine pavings of regular Hessenberg varieties. In view of Precup's result, Theorem \ref{brocho} has the following immediate corollary.

\begin{corollary}
The complex $\mu_{H*}\underline{\mathbb{C}}_{G\times^BH^\text{\emph{r}}}$ has no proper supports.
\end{corollary}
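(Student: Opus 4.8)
The plan is to apply Theorem~\ref{brocho} directly to the family of regular Hessenberg varieties. First I would verify that the morphism $\mu_H : G\times_B H^{\text{r}} \longrightarrow \mathfrak{g}^{\text{r}}$ satisfies the hypotheses of Theorem~\ref{brocho}: the total space $G\times_B H^{\text{r}}$ is smooth (it is an open subvariety of the vector bundle $G\times_B H$ over $G/B$, the preimage of the open set $\mathfrak{g}^{\text{r}}$), the base $\mathfrak{g}^{\text{r}}$ is smooth (open in $\mathfrak{g}$), and the morphism is surjective since every regular element lies in some Borel subalgebra after conjugation. Properness follows from Remark~\ref{properness}, and in fact $\mu_H$ is projective because it factors through the closed embedding $G\times_B H \hooklongrightarrow G/B\times\mathfrak{g}$ followed by the projection $G/B\times\mathfrak{g}\to\mathfrak{g}$, which is projective since $G/B$ is; restricting to the open locus $\mathfrak{g}^{\text{r}}$ preserves projectivity. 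This is exactly the projectivity needed for the relative Hard Lefschetz input flagged in the Remark after Theorem~\ref{brocho}.

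Next I would invoke the palindromicity of the fibers. The fibers of $\mu_H$ over $\mathfrak{g}^{\text{r}}$ are precisely the regular Hessenberg varieties $\Hess(x,H)$ for $x\in\mathfrak{g}^{\text{r}}$, and by General Property~(4) (Precup, \cite{pre:17}) these have palindromic Betti numbers, with $\mathrm{top} = 2\dim(H/\mathfrak{b})$. Since the relative dimension is $d = \dim(G\times_B H^{\text{r}}) - \dim\mathfrak{g}^{\text{r}} = \dim(H/\mathfrak{b})$, this is exactly the palindromicity condition $\dim H^k(\mu_H^{-1}(x)) = \dim H^{2d-k}(\mu_H^{-1}(x))$ appearing as condition~(1) of Theorem~\ref{brocho}. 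Applying the equivalence (1)$\Leftrightarrow$(2) of that theorem, every irreducible summand of $\mu_{H*}\underline{\mathbb{C}}_{G\times_B H^{\text{r}}}$ has full support (and is concentrated in a single cohomological degree, which is more than claimed). This is the statement that the complex has no proper supports.

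I do not anticipate a serious obstacle here, since the corollary is essentially a packaging of previously established facts: the main content is Precup's palindromicity theorem and the Brosnan--Chow equivalence, both assumed available. The only point requiring a small amount of care is confirming projectivity of the restricted morphism and identifying the relative dimension $d$ correctly so that Precup's ``top'' matches $2d$; once those match up, the corollary is immediate from Theorem~\ref{brocho}. If one wanted to be maximally careful, one could also note that Theorem~\ref{brocho} is stated for morphisms between smooth varieties, and the regular locus $\mathfrak{g}^{\text{r}}$ is indeed smooth — it is the complement in the smooth variety $\mathfrak{g}$ of the closed discriminant hypersurface — so no issue arises on that front either.
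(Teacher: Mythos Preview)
Your proposal is correct and follows exactly the approach of the paper, which simply notes that the corollary is immediate from Precup's palindromicity result together with the equivalence (1)$\Leftrightarrow$(2) of Theorem~\ref{brocho}. Your write-up merely spells out the hypothesis checks (smoothness, projectivity, surjectivity, and the matching of the relative dimension with Precup's ``top'') that the paper leaves implicit.
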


Fix a regular element $x\in\mathfrak{g}$ with Jordan decomposition $x=x_{\text{s}}+x_{\text{n}}$. Let $\mathfrak{t}$ now be a Cartan subalgebra containing $x_{\text{s}}$, and let $W$ be the associated Weyl group. We will prove the following consequence of Theorem \ref{brocho}, which is a straightforward generalization of Theorem 127 of \cite{bro.cho:18}. In the case when $x$ is a regular nilpotent element, this result was proved in \cite{abe.hor.mas.mur.sat:19} using the combinatorics of hyperplane arrangements.

\begin{proposition}
\label{isomorphism}
There is a regular element $s\in\mathfrak{t}$ such that
\begin{equation}
\label{linv isom}
H^*(\Hess(x,H))\cong H^*(\Hess(s,H))^{W_{x_{\text{\emph{s}}}}}
\end{equation}
as graded algebras, where the right-hand side is equipped with the dot action and $W_{x_{\text{\emph{s}}}}$ is the stabilizer of $x_{\text{\emph{s}}}$ in $W$.
\end{proposition}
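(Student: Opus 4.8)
The plan is to apply the local invariant cycle theorem to the family $\mu_H\colon G\times_B H^{\text{r}}\longrightarrow\mathfrak g^{\text{r}}$ of regular Hessenberg varieties, with $U=\mathfrak g^{\text{rs}}$. As noted in the corollary preceding this proposition, Precup's palindromicity result together with the equivalence of conditions (1) and (3) in Theorem \ref{brocho} shows that for every $y\in\mathfrak g^{\text{r}}$ the local invariant cycle map $\lambda_y$ is an isomorphism of graded algebras. Taking $y=x$, we obtain an isomorphism of graded algebras
$$\lambda_x\colon H^*(\Hess(x,H))\xrightarrow{\sim} H^*(\Hess(u,H))^{\pi_1(\mathfrak g^{\text{rs}}\cap D_x,\,u)},$$
where $D_x$ is a sufficiently small Euclidean ball around $x$ and $u$ is any point of $\mathfrak g^{\text{rs}}\cap D_x$. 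It then suffices to identify the target, as a graded algebra, with $H^*(\Hess(s,H))^{W_{x_{\text{s}}}}$ for a suitable regular $s\in\mathfrak t$, the action on the right being the Tymoczko dot-action.

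The heart of the argument is the computation of the image of $\pi_1(\mathfrak g^{\text{rs}}\cap D_x,u)$ in $W$ under the surjection \eqref{factors}. Write $\chi\colon\mathfrak g\to\mathfrak t/W$ for the adjoint quotient, which restricts over $\mathfrak g^{\text{rs}}$ to the bottom arrow of \eqref{rs-groth-springer}; since $\mathfrak g^{\text{rs}}=\chi^{-1}(\mathfrak t^{\text{r}}/W)$, the Cartesian square \eqref{rs-groth-springer} exhibits $\pi_1(\mathfrak g^{\text{rs}},u)\to W$ as the composite $\pi_1(\mathfrak g^{\text{rs}},u)\to\pi_1(\mathfrak t^{\text{r}}/W,\chi(u))\to W$. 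Because $D_x$ is small, $\chi(\mathfrak g^{\text{rs}}\cap D_x)$ lies in an arbitrarily small neighbourhood of $\chi(x)=\chi(x_{\text{s}})$ in $\mathfrak t/W$. By the slice theorem for the linear action of the finite group $W$ on $\mathfrak t$, such a neighbourhood is analytically isomorphic to a neighbourhood of the image of $x_{\text{s}}$ in $\mathfrak t/W_{x_{\text{s}}}$, under which $\mathfrak t^{\text{r}}/W$ becomes the quotient by $W_{x_{\text{s}}}$ of the complement of the reflection arrangement of the root subsystem $\Phi_{x_{\text{s}}}=\{\alpha\in\Phi:\alpha(x_{\text{s}})=0\}$, whose Weyl group is precisely $W_{x_{\text{s}}}$. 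The fundamental group of this quotient is an Artin group surjecting onto $W_{x_{\text{s}}}$, so the image of $\pi_1(\mathfrak g^{\text{rs}}\cap D_x,u)$ in $W$ is contained in a $W$-conjugate of $W_{x_{\text{s}}}$; for the reverse inclusion one uses that $\chi$ is a topological fibre bundle over $\mathfrak t^{\text{r}}/W$ with connected (indeed simply connected) fibres $G/T$, so that $\mathfrak g^{\text{rs}}\cap D_x$ surjects onto a full neighbourhood of $\chi(x_{\text{s}})$ in $\mathfrak t^{\text{r}}/W$ and the induced map on fundamental groups is onto.

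Granting this, choose a regular $s\in\mathfrak t$ mapping into that neighbourhood of $\chi(x_{\text{s}})$, together with a path in the connected variety $\mathfrak g^{\text{rs}}$ from $u$ to $s$. Parallel transport of the cohomology local systems of \eqref{ics} along this path gives a graded algebra isomorphism $H^*(\Hess(u,H))\cong H^*(\Hess(s,H))$ intertwining the two monodromy representations, under which $\pi_1(\mathfrak g^{\text{rs}}\cap D_x,u)$ is carried to a subgroup of $\pi_1(\mathfrak g^{\text{rs}},s)$ with image in $W$ equal to a $W$-conjugate of $W_{x_{\text{s}}}$. Since conjugation by an element of $W$ is a graded algebra automorphism of $H^*(\Hess(s,H))$, we may adjust the path so that this image is $W_{x_{\text{s}}}$ itself (which makes sense as a subgroup of $W$ because $x_{\text{s}}\in\mathfrak t$). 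As the monodromy action of $W$ is the Tymoczko dot-action by Corollary \ref{coincidence}, and monodromy acts through algebra automorphisms, we obtain
$$H^*(\Hess(u,H))^{\pi_1(\mathfrak g^{\text{rs}}\cap D_x,u)}\ \cong\ H^*(\Hess(s,H))^{W_{x_{\text{s}}}}$$
as graded subalgebras; composing with $\lambda_x$ yields \eqref{linv isom}.

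The step I expect to be the main obstacle is the local monodromy computation of the second paragraph: establishing that $\pi_1(\mathfrak g^{\text{rs}}\cap D_x,u)$ maps onto a conjugate of $W_{x_{\text{s}}}$ requires a careful description of the topology of the regular semisimple locus in a neighbourhood of the possibly non-regular semisimple element $x_{\text{s}}$, obtained by combining a slice theorem for the adjoint quotient with the elementary picture of a reflection arrangement transverse to a stratum of the discriminant. In type $A$ this is Theorem 127 of \cite{bro.cho:18}; the argument there adapts with only cosmetic changes once one replaces $S_n$ by $W$ and the partition-type combinatorics by the subsystem $\Phi_{x_{\text{s}}}$.
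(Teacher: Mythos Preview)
Your proposal is correct and follows the same overall strategy as the paper: apply Theorem~\ref{brocho} via Precup's palindromicity to make the local invariant cycle map an isomorphism, then identify the local monodromy group as a conjugate of $W_{x_{\text{s}}}$.

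The differences are in packaging. First, rather than choosing an arbitrary $u\in\mathfrak g^{\text{rs}}\cap D_x$ and then parallel-transporting along a path to a regular $s\in\mathfrak t$, the paper simply conjugates $x$ at the outset so that $D_x$ already meets $\mathfrak t^{\text{r}}$ and takes $s$ there; this spares you the transport argument and the bookkeeping about conjugating the subgroup back to $W_{x_{\text{s}}}$ itself. Second, for the local monodromy computation the paper does not unpack the slice picture for $W$ acting on $\mathfrak t$; instead it invokes \cite[Proposition~106]{bro.cho:18} together with the Cartesian diagram for $\mu_{\mathfrak b}$ over the \emph{regular} locus (not just the regular semisimple locus), which identifies the image of $\pi_1(\mathfrak g^{\text{rs}}\cap D_x,s)\to W$ with the $W$-stabilizer of the lift $[1:x]\in\tilde{\mathfrak g}^{\text{r}}$, and that stabilizer is $W_{x_{\text{s}}}$ because the top arrow sends $[1:x]$ to $x_{\text{s}}$. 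Your slice-theorem argument is the content underlying that proposition, so the two routes agree; the one soft spot in your write-up is the surjectivity of $\pi_1(\mathfrak g^{\text{rs}}\cap D_x)\to W_{x_{\text{s}}}$, which does not follow just from $\chi$ having simply connected fibres globally (the fibres of $\chi|_{D_x}$ are not full $G/T$'s). The clean fix is to use that $\chi\colon\mathfrak g^{\text{r}}\to\mathfrak t/W$ is a submersion and hence admits a local section through $x$, giving a copy of a punctured neighbourhood of $\chi(x_{\text{s}})$ in $\mathfrak t^{\text{r}}/W$ inside $\mathfrak g^{\text{rs}}\cap D_x$.
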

\begin{proof}
Fix a small Euclidean ball $D_x\subset\mathfrak{g}^{\text{r}}$ centered at $x$ such that the restriction 
\[H^*(\mu_H^{-1}(D_x))\longrightarrow H^*(\Hess(x,H))\]
given by \eqref{linv1} is an isomorphism. By conjugating $x$ if necessary, we can assume that there exists some $s\in\mathfrak{t}^{\text{r}}\cap D_x$. 

Since regular Hessenberg varieties have palindromic Betti numbers \cite{pre:17}, Theorem \ref{brocho} implies that the local invariant cycle theorem gives an isomorphism
\[H^*(\Hess(x,H))\longrightarrow H^*(\Hess(s,H))^{\pi_1(\mathfrak{g}^{\text{rs}}\cap D_x,s)}.\]
We are then reduced to proving the following fact: the image of the composition
\begin{equation}
\label{local monodromy factors}
\pi_1(\mathfrak{g}^{\text{rs}}\cap D_x,s)\longrightarrow \pi_1(\mathfrak{g}^{\text{rs}},s)\xlongrightarrow{\eqref{factors}} W
\end{equation}
is a subgroup of $W$ conjugate to $W_{x_{\text{s}}}.$

Let $\mathfrak{b}$ now be a Borel subalgebra containing $x$ and $\mathfrak{t}$, and let $B$ be the corresponding Borel subgroup. Restricting the Grothendieck--Springer resolution to the regular locus, we obtain a $W$-equivariant Cartesian diagram
\begin{equation}
\label{cartesian}
\begin{tikzcd}[row sep=huge]
\tilde{\mathfrak{g}}^{\text{r}}=G\times^B\mathfrak{b}^{\text{r}}		\arrow{r}\arrow{d}{\mu_{\mathfrak{b}}}	&\mathfrak{t}	\arrow{d}\\
\mathfrak{g}^{\text{r}}	\arrow{r}					&\mathfrak{t}/W.
\end{tikzcd}
\end{equation}
The top horizontal arrow maps the point $[1:x]\in\mu_\mathfrak{b}^{-1}(x)$ to the semisimple part $x_{\text{s}}$. 

By \cite[Proposition 106]{bro.cho:18}, the image of the composition \eqref{local monodromy factors} is a subgroup of $W$ conjugate to the stabilizer of $[1:x]$ under the action of $W$ on $\tilde{\mathfrak{g}}^{\text{r}}$. Because \eqref{cartesian} is Cartesian, the stabilizer of $[1:x]$ in $W$ is precisely $W_{x_{\text{s}}}$. \end{proof}

We will use Proposition \ref{isomorphism} to show that $H^*(\Hess(x,H))$ has the ``K\"ahler'' package. This generalizes Proposition 8.14 and Theorem 12.1 of \cite{abe.hor.mas.mur.sat:19}, which apply in the case when $x$ is regular nilpotent. Compared to these results, our proofs are simplified by the identification of the dot action with the monodromy.

\begin{theorem}
\label{kahler}
Let $x\in\mathfrak{g}$ be a regular element and let $l$ be the dimension of the regular Hessenberg variety $\Hess(x,H)$.
\begin{enumerate}
\item \emph{(}Poincar\'{e} duality\emph{)} The cohomology ring $H^*(\Hess(x,H))$ is a Poincar\'{e} duality algebra.
\end{enumerate}
Moreover, there is a nonzero ``K\"ahler'' class $\omega\in H^2(\Hess(x,H))$ satisfying the following properties: 
\begin{enumerate}
\setcounter{enumi}{1}
\item \emph{(}Hard Lefschetz\emph{)} Multiplication by $\omega^k$ induces an isomorphism
\[H^{l-k}(\Hess(x,H))\xlongrightarrow{\omega^k}H^{l+k}(\Hess(x,H))\qquad\text{for every }0\leq k\leq l.\]
\item \emph{(}Hodge--Riemann\emph{)} For every $0\leq k\leq l$, the symmetric bilinear form
\begin{align*}
H^k(\Hess(x,H))\times H^k(\Hess(x,H))&\longrightarrow\mathbb{C} \\
					(\alpha,\beta)&\longmapsto (-1)^k\int\alpha\cup\beta\cup\omega^{l-k}
\end{align*}
is positive-definite on the kernel of the linear map
\begin{align*}
H^k(\Hess(x,H))\xlongrightarrow{\,\,\omega^{l-k+1}\,\,} H^{2l-k+2}(\Hess(x,H)).
\end{align*}
\end{enumerate}
\end{theorem}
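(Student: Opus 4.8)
The plan is to reduce, by Proposition~\ref{isomorphism}, to a statement about the smooth projective variety $\Hess(s,H)$ and then to descend the K\"ahler package to the subalgebra of invariants of the dot-action. Proposition~\ref{isomorphism} provides a regular element $s\in\mathfrak{t}$ together with an isomorphism of graded algebras $H^*(\Hess(x,H))\cong H^*(\Hess(s,H))^{W_{x_{\mathrm{s}}}}$, the target carrying the Tymoczko dot-action. The variety $\Hess(s,H)$ is a closed subvariety of $G/B$ and is smooth (\cite[Theorem 6]{dem.pro.sha:92}), hence smooth and projective; by classical Hodge theory $H^*(\Hess(s,H))$ is then a Poincar\'e duality algebra of top degree $2l$ that satisfies hard Lefschetz and the Hodge--Riemann relations with respect to any K\"ahler class. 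It therefore suffices to produce a $W_{x_{\mathrm{s}}}$-invariant K\"ahler class on $\Hess(s,H)$ and to check that the subalgebra of $W_{x_{\mathrm{s}}}$-invariants inherits these three properties.

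\emph{The invariant K\"ahler class.} This is the geometric heart of the argument. I would pull back the ample line bundle $\mathcal{O}_{G/B}(1)$ along the projection $G\times_B H^{\mathrm{rs}}\to G/B$ to get a class $\Omega\in H^2(G\times_B H^{\mathrm{rs}})$, and restrict it to the fiber $\Hess(s,H)$ to get a class $\omega_s\in H^2(\Hess(s,H))$. Since $\omega_s$ is the restriction to the closed subvariety $\Hess(s,H)\subset G/B$ of an ample line bundle on $G/B$, it is ample, hence a K\"ahler class on the smooth projective variety $\Hess(s,H)$. Because $\omega_s$ is the restriction of a class defined on the whole total space, it is fixed by the monodromy action of $\pi_1(\mathfrak{g}^{\mathrm{rs}},s)$ on $H^*(\Hess(s,H))$: the restriction map from the cohomology of the total space to that of a fiber intertwines the trivial action on the source with the monodromy action on the target. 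By Corollary~\ref{coincidence} this monodromy action is the dot-action of $W$, so $\omega_s$ is $W$-invariant, in particular $W_{x_{\mathrm{s}}}$-invariant. Because the monodromy is realized by parallel transport in the local systems $R^\bullet\mu_{H*}\underline{\mathbb{C}}$ over $\mathfrak{g}^{\mathrm{rs}}$, the dot-action moreover acts on $H^*(\Hess(s,H))$ by graded algebra automorphisms, and it preserves the degree map $\int\colon H^{2l}(\Hess(s,H))\to\mathbb{C}$, which is integration against the fundamental class of the fibers of the oriented fibration $\mu_H$.

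\emph{Descent to invariants.} It remains to verify an elementary piece of linear algebra: if $(A,\int)$ is a finite-dimensional graded Poincar\'e duality algebra over $\mathbb{R}$ of top degree $2l$, if $\omega\in A^2$ satisfies hard Lefschetz and the Hodge--Riemann relations, and if a finite group $\Gamma$ acts on $A$ by graded algebra automorphisms fixing $\omega$ and the degree map $\int$, then $(A^\Gamma,\int)$ is a graded Poincar\'e duality algebra satisfying hard Lefschetz and the Hodge--Riemann relations with respect to $\omega$. The Poincar\'e pairing $\langle\alpha,\beta\rangle=\int\alpha\beta$ is $\Gamma$-invariant, so for nonzero $\alpha\in(A^k)^\Gamma$ any $\beta$ with $\langle\alpha,\beta\rangle\neq0$ has $\Gamma$-average that still pairs nontrivially with $\alpha$; this gives Poincar\'e duality for $A^\Gamma$. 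Multiplication by $\omega^k$ is $\Gamma$-equivariant, so it restricts to the hard Lefschetz isomorphisms between the invariant subspaces. Since $\omega$ is $\Gamma$-fixed, the primitive subspace $\ker(\omega^{l-k+1}\colon A^k\to A^{2l-k+2})$ is a $\Gamma$-submodule, so the primitive part of $A^\Gamma$ in degree $k$ is exactly the $\Gamma$-invariants of the primitive part of $A$; the Hodge--Riemann form is $\Gamma$-invariant and positive definite on the latter, hence on the former. Applying this with $A=H^*(\Hess(s,H))$, $\Gamma=W_{x_{\mathrm{s}}}$ acting by the dot-action, and $\omega=\omega_s$, and transporting along the isomorphism of Proposition~\ref{isomorphism}, one obtains the K\"ahler package for $H^*(\Hess(x,H))$ with $\omega$ the image of $\omega_s$; here $\omega\neq0$ whenever $l\geq1$ because $\omega_s$ is ample, and all three assertions are vacuous when $l=0$.

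The step I expect to be the main obstacle is the construction of a $W_{x_{\mathrm{s}}}$-invariant K\"ahler class. The dot-action is \emph{not} induced by any algebraic automorphism of $\Hess(s,H)$, so one cannot average a K\"ahler class geometrically, and naively averaging in $H^2(\Hess(s,H);\mathbb{R})$ need not preserve Hodge--Riemann positivity. The relatively ample class $\mathcal{O}_{G/B}(1)$ resolves this, since it is simultaneously K\"ahler on every fiber of $\mu_H$ and, being the restriction of a global class, monodromy-invariant. The possible disconnectedness of $\Hess(s,H)$ when $H$ is not indecomposable causes no difficulty, as the descent argument uses only the $\Gamma$-invariance of the degree map, which holds because the fundamental class of the disjoint union of components is monodromy-invariant.
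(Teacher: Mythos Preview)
Your proposal is correct and follows essentially the same approach as the paper: reduce via Proposition~\ref{isomorphism} to $H^*(\Hess(s,H))^{W_{x_{\mathrm s}}}$, produce a $W$-invariant K\"ahler class by restricting an ample class from $G/B$ through the total space $G\times_B H^{\mathrm{rs}}$ (so that monodromy-invariance is automatic), and then descend the K\"ahler package to the invariant subalgebra. The paper's argument is the same, only terser on the final descent step; your explicit linear-algebra verification of Poincar\'e duality, hard Lefschetz, and Hodge--Riemann for $A^\Gamma$ fills in exactly what the paper leaves implicit.
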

\begin{proof}
(1) Let $s\in\mathfrak{t}$ be the regular semisimple element of \eqref{linv isom}. Because $\Hess(s,H)$ is smooth, there is a nondegenerate Poincar\'{e} duality pairing
\[H^k(\Hess(s,H))\times H^{2l-k}(\Hess(s,H))\longrightarrow H^{2l}(\Hess(s,H))\longrightarrow\mathbb{C}.\]
The first arrow is given by the cup product, and the second is given by evaluating on the homology class which is the sum of the fundamental classes of the irreducible components of $\Hess(s,H)$.

The monodromy action of $W$ preserves both the cup product and this sum, so this pairing is $W$-invariant. Restricting it to 
\[H^k(\Hess(s,H))^{W_{x_{\text{s}}}}\times H^{2l-k}(\Hess(s,H))^{W_{x_{\text{s}}}}\longrightarrow H^{2l}(\Hess(s,H))^{W_{x_{\text{s}}}}\longrightarrow\mathbb{C}\]
gives a nondegenerate pairing on $H^*(\Hess(x, H))\cong H^*(\Hess(s,H))^{W_{x_{\text{s}}}}.$

(2 and 3) The usual inclusions form a commutative diagram
\begin{equation*}
\begin{tikzcd}[row sep=huge]
G\times^BH		\arrow{d}						&G\times^BH^{\text{rs}}	\arrow[l, hook']\\
G/B											&\Hess(s,H) 			\arrow[u, hook]\arrow[l, hook'],
\end{tikzcd}
\end{equation*}
where the left vertical arrow is the bundle map.

This gives a commutative diagram of pullbacks in cohomology
\begin{equation*}
\begin{tikzcd}
H^*(G\times^BH)		\arrow{r}			&H^*(G\times^BH^{\text{rs}})	\arrow{dd}\\
&&\\
H^*(G/B)				\arrow{r}\arrow{uu}					&H^*(\Hess(s,H)).
\end{tikzcd}
\end{equation*}
Since $G\times^BH^{\text{rs}}\longrightarrow\mathfrak{g}^{\text{rs}}$ is a locally trivial fibration, the image of the right vertical arrow lies in the space $H^*(\Hess(s,H))^W$ of monodromy invariants. It follows that the image of the restriction
\[H^*(G/B)\longrightarrow H^*(\Hess(s,H))\]
also lies in $H^*(\Hess(s,H))^W$. 

Since both $G/B$ and $\Hess(s,H)$ are smooth projective varieties, there is a K\"ahler class in $H^2(G/B)$ whose image is a K\"ahler class $\omega\in H^2(\Hess(s,H))$. By the discussion above, $\omega$ is $W$-invariant. Its preimage under \eqref{linv isom} then satisfies (2) and (3) for $H^*(\Hess(x,H))$. 
\end{proof}

The hard Lefschetz property implies that the even-degree Betti numbers of any regular Hessenberg variety are unimodal \cite[Corollary 1.2.9]{max:19}. This unimodality has already been observed in the regular nilpotent case by Tymoczko \cite[Section 9.3]{tym:06}. In type $A$, it also follows from \cite[Proposition 10.2]{sha.wac:16} and \cite[Theorem 129]{bro.cho:18}.

\begin{corollary}
Let $x\in\mathfrak{g}$ be a regular element. The even Betti numbers of the Hessenberg variety $\Hess(x,H)$ form a unimodal sequence.\\
\end{corollary}

%
%
%
%
%
%
%
%
%
\appendix
\section*{Appendix: Monodromy actions on equivariant cohomology}
\setcounter{section}{1}
\setcounter{equation}{0}

In this appendix we recall some fundamental background on equivariant cohomology, and we use it to prove a series of more specialized facts about monodromy actions in the equivariant setting. These results, which are straightforward but which we were unable to easily find  in the literature, are needed to show in Corollary \ref{coincidence} that the Tymoczko dot action is induced by monodromy. 

\subsection{Background on equivariant cohomology}
\label{Background on equivariant cohomology}
First we recall some standard facts about equivariant cohomology. We refer to \cite[Section 1]{bri:98} for more details. Let $G$ be a connected Lie group and fix a principal $G$-bundle $EG\longrightarrow BG$ whose total space is contractible. Suppose that $X$ is a $G$-space---that is, a topological space equipped with a continuous action of $G$. Then the product $X\times EG$ carries a free diagonal action of $G$. The $G$-equivariant cohomology of $X$ is the singular cohomology of the quotient $X\times^G EG$:
\[H_G^*(X)=H^*(X\times^G EG).\]
It is independent of the choice of $EG\longrightarrow BG$.

A continuous $G$-equivariant map $f:X\longrightarrow Y$ between $G$-spaces $X$ and $Y$ induces a pullback 
\begin{equation}
\label{Equation: Functoriality} 
H_G^*(Y)=H^*(Y\times^G EG)\longrightarrow H^*(X\times^G EG)=H_G^*(X),
\end{equation}
which is a homomorphism of graded algebras. In particular, the map $X\longrightarrow \{\pt\}$ induces
\[H_G^*(\text{pt})\longrightarrow H_G^*(X),\]
giving $H_G^*(X)$ the structure of a graded $H_G^*(\pt)$-algebra. The pullback \eqref{Equation: Functoriality} is a homomorphism of graded $H_G^*(\text{pt})$-algebras. 

Let $K$ be any closed, connected subgroup of $G$. Then $EG\longrightarrow EG/K$ is a principal $K$-bundle whose total space is contractible. Therefore 
\[H_K^*(X)\cong H^*(X\times^K EG).\]
The pullback along $X\times^K EG\longrightarrow X\times^G EG$ induces a homomorphism of graded algebras
\[H_G^*(X)\longrightarrow H_K^*(X),\]
called a \emph{specialization map}. In particular, when $K=\{1\}$ is the trivial subgroup of $G$, this gives a natural map
\[H_G^*(X)\longrightarrow H^*(X).\]

\begin{lemma}
\label{real forms}
Suppose that $K$ is a maximal compact subgroup of $G$. Then specialization induces an isomorphism
\[H^*_G(X)\cong H^*_K(X).\]
\end{lemma}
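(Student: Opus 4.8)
The plan is to deduce Lemma \ref{real forms} from the contractibility of the homogeneous space $G/K$ together with the fact that $EG$ can be chosen to serve simultaneously as a universal space for $G$ and for $K$. Concretely, since $K$ is a maximal compact subgroup of the connected Lie group $G$, the quotient $G/K$ is diffeomorphic to a Euclidean space and in particular is contractible. The fibration $EG \longrightarrow EG/K$ is a principal $K$-bundle, and I would first observe that its total space $EG$ is contractible, so it is a legitimate model for $EK$; this was already noted in the paragraph preceding the lemma and gives $H^*_K(X) \cong H^*(X \times_K EG)$.

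The next step is to compare $X \times_K EG$ with $X \times_G EG$. There is a natural map $p\colon X \times_K EG \longrightarrow X \times_G EG$, namely the one quotienting by the residual $G/K$-worth of identifications; equivalently it is the pullback to $X \times_G EG$ of the bundle $EG/K \longrightarrow BG$. This map is a fiber bundle with fiber $G/K$. Since $G/K$ is contractible, $p$ is a weak homotopy equivalence (the long exact sequence of the fibration, or the Leray--Hirsch/Serre spectral sequence with a contractible fiber, collapses), hence induces an isomorphism on singular cohomology. Chasing through the identifications, this isomorphism is precisely the specialization map $H^*_G(X) \longrightarrow H^*_K(X)$, because specialization is by definition the pullback along $p$ (or rather along the map $X \times_K EG \to X \times_G EG$ induced by the inclusion $K \hookrightarrow G$, which is exactly $p$).

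The main obstacle is bookkeeping rather than conceptual: one must make sure the model of $EG$ is chosen so that all the spaces in sight ($X \times_G EG$, $X \times_K EG$, and the bundle relating them) are genuine nice spaces (CW or at least paracompact) so that "weak homotopy equivalence implies cohomology isomorphism" applies, and one must check that the abstract isomorphism coming from the contractible-fiber argument agrees on the nose with the specialization homomorphism defined earlier. Both points are standard; the cleanest route is to cite a reference such as \cite[Section 1]{bri:98} for the statement that specialization to a maximal compact subgroup is an isomorphism, and otherwise to spell out the fibration $X\times_K EG \to X\times_G EG$ with fiber $G/K$ and invoke contractibility of $G/K$.
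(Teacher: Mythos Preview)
Your proposal is correct and follows essentially the same argument as the paper: both exhibit $X\times_K EG \to X\times_G EG$ as a fibration with fiber $G/K$, invoke contractibility of $G/K$ (the paper cites Iwasawa), and conclude that the pullback---which is the specialization map---is an isomorphism. The only difference is that you spell out the bookkeeping concerns a bit more explicitly than the paper does.
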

\begin{proof}
The fibration
\[X\times^KEG\longrightarrow X\times^GEG\]
has fiber isomorphic to $G/K$. Since $K$ is a maximal compact subgroup of $G$, the quotient $G/K$ is contractible \cite[Theorem 6]{iwa:49}. It follows that the pullback
\[H^*(X\times^GEG)\longrightarrow H^*(X\times^KEG)\]
is an isomorphism. 
\end{proof}

\subsection{Equivariant cohomology in local systems}
Let $G$ be a semisimple complex algebraic group and let $\mathfrak{g}$ be its Lie algebra. Fix a maximal torus $T$ with Lie algebra $\mathfrak{t}$ and Weyl group $W$. Suppose that $X$ is a smooth algebraic $G$-variety equipped with a smooth, $G$-equivariant, proper surjective morphism $\varphi:X\longrightarrow\mathfrak{g}^{\text{rs}}$. For any subset $V\subset\mathfrak{g}$, let
\[X_V=\varphi^{-1}(V).\]
In particular, $X_s$ is the fiber above a regular semisimple element $s\in\mathfrak{g}$, and $X_{\mathfrak{t}}$ is the restriction of $X$ to $\mathfrak{t}^{\text{r}}$.

\begin{theorem}
\label{actual action}
Let $s\in\mathfrak{g}$ be a regular semisimple element. For any non-negative integer $k$, there exists a local system $\mathcal{L}^k$ on $\mathfrak{g}^{\text{\emph{rs}}}$ whose fiber at $s$ is $H^k_{G_s}(X_s).$ Therefore the fundamental group $\pi_1(\mathfrak{g}^{\text{\emph{rs}}},s)$ acts on the equivariant cohomology ring $H^*_{G_s}(X_s)$ by monodromy.
\end{theorem}

Theorem \ref{actual action} will follow from the next Proposition.

\begin{proposition}
\label{torus equivariant}
For any non-negative integer $k$, there exists a local system on $\mathfrak{t}^{\text{\emph{r}}}$ whose fiber at $s\in\mathfrak{t}^{\text{\emph{r}}}$ is $H^k_{T}(X_s).$
\end{proposition}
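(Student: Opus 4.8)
The plan is to build $\mathcal{L}^k$ as a higher direct image of the constant sheaf along a locally trivial fibration over $\mathfrak{t}^{\text{r}}$, obtained by performing the Borel construction relative to the family $\varphi$. The starting observation is that the adjoint action of $T$ on $\mathfrak{g}$ fixes $\mathfrak{t}$ pointwise, so $T$ fixes every point of $\mathfrak{t}^{\text{r}}\subseteq\mathfrak{g}^{\text{rs}}$. Hence $T$ preserves $X_{\mathfrak{t}}=\varphi^{-1}(\mathfrak{t}^{\text{r}})$ and acts on it fibrewise, and $\varphi$ restricts to a $T$-invariant morphism $\varphi\colon X_{\mathfrak{t}}\longrightarrow\mathfrak{t}^{\text{r}}$. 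This restriction is proper, being a base change of $\varphi$, and it is a submersion of complex manifolds, being the restriction of the smooth morphism $\varphi$ to the preimage of the submanifold $\mathfrak{t}^{\text{r}}$. Since equivariant cohomology for $T$ agrees with that for its maximal compact torus $T_c$ by Lemma \ref{real forms}, I would work from now on with the proper $T_c$-invariant submersion $\varphi\colon X_{\mathfrak{t}}\longrightarrow\mathfrak{t}^{\text{r}}$, where $T_c$ acts trivially on the target.

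For each $m$, fix a compact smooth $2m$-connected manifold $E_mT_c$ carrying a free $T_c$-action, for instance a product of copies of $S^{2m+1}$ with coordinatewise Hopf actions, and form the relative Borel construction
\[\pi_m\colon X_{\mathfrak{t}}\times_{T_c}E_mT_c\longrightarrow\mathfrak{t}^{\text{r}},\qquad [x,e]\longmapsto\varphi(x).\]
The quotient map $q_m\colon X_{\mathfrak{t}}\times E_mT_c\longrightarrow X_{\mathfrak{t}}\times_{T_c}E_mT_c$ is a surjective submersion, and it is proper because $T_c$ is compact; since $E_mT_c$ is compact, $\pi_m\circ q_m=\varphi\circ\mathrm{pr}_1$ is a proper submersion. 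An elementary argument with differentials then shows that $\pi_m$ itself is a proper submersion, so Ehresmann's fibration theorem makes $\pi_m$ a locally trivial fibration over $\mathfrak{t}^{\text{r}}$ with fibre $X_s\times_{T_c}E_mT_c$ over $s$. Consequently $R^k(\pi_m)_*\underline{\mathbb{C}}$ is a local system on $\mathfrak{t}^{\text{r}}$ with stalk $H^k(X_s\times_{T_c}E_mT_c)$; for $m$ large relative to $k$ this stalk is canonically $H^k_{T_c}(X_s)$, and the inclusions $E_mT_c\hookrightarrow E_{m+1}T_c$ identify these local systems in the stable range. I would let $\mathcal{L}^k$ be the resulting local system, whose stalk at $s\in\mathfrak{t}^{\text{r}}$ is $H^k_{T_c}(X_s)\cong H^k_T(X_s)$ by Lemma \ref{real forms}; since the parallel transport of $\bigoplus_k\mathcal{L}^k$ respects cup products, this also realizes $\mathcal{L}^k$ compatibly with the graded algebra structure.

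The argument is more bookkeeping than conceptual. The one place that genuinely needs attention is the verification that the relative Borel construction $\pi_m$ is again a proper submersion, so that the ordinary (non-equivariant) Ehresmann theorem applies: here one uses that a smooth map is a submersion as soon as its precomposition with a surjective submersion is, together with the fact that quotients by compact group actions are proper. The remaining point is the stabilization in $m$, which is routine because the approximation maps are compatible over $\mathfrak{t}^{\text{r}}$; passing to the compact torus at the outset is what keeps the approximations $E_mT_c$, and hence $\pi_m$, proper.
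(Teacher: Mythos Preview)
Your argument is correct and essentially matches the paper's proof: both pass to the maximal compact torus via Lemma \ref{real forms}, form a relative Borel construction using a compact finite-dimensional approximation to the classifying space, and take the $k$-th derived pushforward of the resulting proper locally trivial fibration over $\mathfrak{t}^{\text{r}}$. The only notable difference is that the paper chooses the approximating manifold $V$ to carry a free action of the full maximal compact group $K$ (not just of the compact torus), a choice irrelevant for this Proposition but convenient in the paper's subsequent proof of Theorem \ref{actual action}, where one quotients by the normalizer $N_K(S)$.
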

\begin{proof}
Let $K$ be a maximal compact subgroup of $G$ such that $S=K\cap T$ is a maximal compact torus in $T$. By Lemma \ref{real forms}, 
\[H^k_{T}(X_s)=H^k_{S}(X_s).\]

Fix $n\geq k$ and $n\geq 2\dim X$, and let $V$ be a $n$-connected, compact manifold on which $K$ acts freely. (Such a manifold always exists, see for example \cite[Theorem 19.6]{ste:57}.) Then
\[H^k_{S}(X_s)=H^k(X_s\times^{S} V).\]
(See \cite[Section 1]{bri:98}.) Since $\varphi:X_\mathfrak{t}\longrightarrow\mathfrak{t}^{\text{r}}$ is smooth and proper, the induced map
\[\Phi:X_\mathfrak{t}\times^SV\longrightarrow \mathfrak{t}^{\text{r}}\]
is a proper locally trivial fibration. The $k$-th derived pushforward $R^k\Phi_*\underline{\mathbb{C}}_{X_\mathfrak{t}\times^SV}$ is therefore a local system. By proper base change 
\begin{align*}
(R^k\Phi_*\underline{\mathbb{C}}_{X_\mathfrak{t}\times^SV})_{s}	& \cong H^k(\Phi^{-1}(s)) \\
																	& \cong H^k(X_s\times^{S}V) \cong H^k_S(X_s)  \cong H^k_{T}(X_s).\qedhere
																\end{align*}

\end{proof}

\begin{proof}[Proof of Theorem \ref{actual action}]
We keep the same notation as in the proof of Proposition \ref{torus equivariant}. There is a Cartesian diagram 
\begin{equation}
\label{torus diagram}
\begin{tikzcd}[row sep=huge]
X_{\mathfrak{t}}\times^{S}V	\arrow{r}\arrow{d}{\Phi} 				&X_{\mathfrak{t}}\times^{N_K(S)}V	\arrow{d}{\bar{\Phi}}\\
\mathfrak{t}^{\text{r}}			\arrow{r}							&\mathfrak{t}^{\text{r}}/W,
\end{tikzcd}
\end{equation}
where $N_K(S)$ is the normalizer of the compact torus $S$ in $K$. All the maps above are smooth, locally trivial fibrations, and $R^k\Phi_*\underline{\mathbb{C}}_{X_\mathfrak{t}\times^SV}$ is the pullback of the local system 
\begin{equation}
\label{ellbar}
\bar{\mathcal{L}}^k=R^k\bar{\Phi}_*\underline{\mathbb{C}}_{X_\mathfrak{t}\times^{N_K(S)}V}.
\end{equation}
The Chevalley isomorphism gives a smooth surjection of algebraic varieties
\[\chi:\mathfrak{g}^{\text{rs}}\longrightarrow \mathfrak{g}^{\text{rs}}/G\cong\mathfrak{t}^{\text{r}}/W,\]
and the fiber at $s\in\mathfrak{g}^{\text{rs}}$ of the pullback 
\[\mathcal{L}^k=\chi^*\bar{\mathcal{L}}^k\]
is exactly $H^k_{G_s}(X_s).$ Therefore there is a monodromy action of $\pi_1(\mathfrak{g}^{\text{rs}},s)$ on $H^k_{G_s}(X_s)$.
\end{proof}

\begin{remark}
\label{same pi-1-rk}
Consider once again the Cartesian diagram \eqref{rs-groth-springer}
\begin{equation*}
\begin{tikzcd}[row sep=huge]
\tilde{\mathfrak{g}}^{\text{rs}}	\arrow{r}\arrow{d}{\mu_{\mathfrak{b}}} 			&\mathfrak{t}^{\text{r}}	\arrow{d}{}\\
\mathfrak{g}^{\text{rs}}		\arrow{r}{\chi}								&\mathfrak{t}^{\text{r}}/W.
\end{tikzcd}
\end{equation*}
For any regular element $s\in\mathfrak{t}$, let $\bar{s}\in\mathfrak{t}^{\text{r}}/W$ be its image under right vertical arrow and let $\tilde{s}$ be a fixed preimage under the top horizontal arrow. 

The adjoint quotient $\chi$ is a smooth morphism with fiber isomorphic to the variety $G/T$, which is connected and simply-connected. Taking the appropriate long exact sequence of homotopy groups gives group isomorphisms
\begin{equation}
\label{same pi-1-eq}
\pi_1(\mathfrak{g}^{\text{rs}},s)\cong\pi_1(\mathfrak{t}^{\text{r}}/W,\bar{s})\qquad\text{and}\qquad\pi_1(\tilde{\mathfrak{g}}^{\text{rs}},\tilde{s})\cong\pi_1(\mathfrak{t}^{\text{r}},s).
\end{equation}

The vertical arrows are Galois covers with Galois group $W$, and in view of \eqref{same pi-1-eq} they induce two isomorphic short exact sequences of groups:
\begin{align}
\label{fundamentalW}
&1\longrightarrow\pi_1(\tilde{\mathfrak{g}}^{\text{rs}},\tilde{s})\longrightarrow\pi_1(\mathfrak{g}^{\text{rs}},s)\xlongrightarrow{(\star)} W\longrightarrow 1;\\
\label{fundamentalWtorus}
&1\longrightarrow\pi_1(\mathfrak{t}^{\text{r}},s)\longrightarrow\pi_1(\mathfrak{t}^{\text{r}}/W,\bar{s})\xlongrightarrow{(\bullet)} W\longrightarrow 1. \nonumber
\end{align}
The surjective map of \eqref{fundamentalW} is the homomorphism \eqref{factors} of Section \ref{monodromy section}. 

In particular, these short exact sequences imply that the monodromy action of $\pi_1(\tilde{\mathfrak{g}}^{\text{rs}},s)$ given by Theorem \ref{actual action} coincides with the monodromy action of $\pi_1(\mathfrak{t}^{\text{r}}/W)$ on the fibers of the local system $\bar{\mathcal{L}}^k$ defined in \eqref{ellbar}.
\end{remark}

\begin{remark}
\label{weyl action}
(1) The natural action of $W$ on $EG/T$ induces an action of $W$ on the cohomology ring $H^*_T(\pt)=H^*(EG/T)$. The natural isomorphism $H^*_T(\pt)\cong\mathbb{C}[\mathfrak{t}]$ \cite[Example 1.2]{bri:98} is equivariant with respect to this action.

(2) Let $s\in\mathfrak{t}$ be a regular element. Because $T$ acts trivially on $\mathcal{B}_s=\mu_{\mathfrak{b}}^{-1}(s)$, the K\"unneth theorem gives an isomorphism 
\[H_T^*(\mathcal{B}_s)=H^*(\mathcal{B}_s\times^TEG)=H^*(\mathcal{B}^s) \otimes H^*(EG/T)=H^0(\mathcal{B}_s)\otimes H^*_T(\pt).\]
The action of the Weyl group on $H^0(\mathcal{B}_s)$ and on $H^*_T(\pt)$ gives a diagonal $W$-action on $H_T^*(\mathcal{B}_s)$.
\end{remark}

We will show that Remark \ref{weyl action} gives concrete descriptions of the monodromy action defined in Theorem \ref{actual action} in the case $X=\mathfrak{g}^{\text{rs}}$ and $X=\tilde{\mathfrak{g}}^\text{rs}$.

\begin{proposition}
\label{point}
Let $X=\mathfrak{g}^{\text{\emph{rs}}}$, let $\varphi$ be the identity map, and let $s\in\mathfrak{t}$ be a regular element. The monodromy action of $\pi_1(\mathfrak{g}^{\text{\emph{rs}}},s)$ on $H^*_T(s)$ factors through \eqref{factors} and agrees with the $W$-action defined in Remark \ref{weyl action}.
\end{proposition}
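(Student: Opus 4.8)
The plan is to trace through the construction of the monodromy action from Theorem \ref{actual action} in the special case $X = \mathfrak{g}^{\mathrm{rs}}$ with $\varphi = \mathrm{id}$, and identify the resulting local system explicitly. First I would note that in this case the fiber $X_s = \{s\}$ is a single point, so $X_{\mathfrak{t}} = \mathfrak{t}^{\mathrm{r}}$, and the space $X_{\mathfrak{t}} \times_S V = \mathfrak{t}^{\mathrm{r}} \times_S V$ fibers over $\mathfrak{t}^{\mathrm{r}}$ with fiber $V/S \simeq EG/S$ (up to the $n$-connectedness approximation), so that the local system $\mathcal{L}^k$ of Proposition \ref{torus equivariant} has fiber $H^k(EG/S) = H^k_S(\mathrm{pt}) \cong H^k_T(\mathrm{pt})$. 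Since this is the pullback of $\bar{\mathcal{L}}^k$ along the Galois cover $\mathfrak{t}^{\mathrm{r}} \to \mathfrak{t}^{\mathrm{r}}/W$, and $\bar{\mathcal{L}}^k = R^k\bar{\Phi}_* \underline{\mathbb{C}}$ for $\bar{\Phi}: \mathfrak{t}^{\mathrm{r}} \times_{N_K(S)} V \to \mathfrak{t}^{\mathrm{r}}/W$, the monodromy action of $\pi_1(\mathfrak{t}^{\mathrm{r}}/W, \bar s)$ on the fiber is exactly the action obtained by deck transformations combined with the $W$-action on cohomology of the fiber.

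The second step is to identify this monodromy action with the $W$-action of Remark \ref{weyl action}. By Remark \ref{same pi-1-rk}, the short exact sequence \eqref{fundamentalW} for $\pi_1(\mathfrak{g}^{\mathrm{rs}},s)$ is isomorphic to the sequence \eqref{fundamentalWtorus} for $\pi_1(\mathfrak{t}^{\mathrm{r}}/W, \bar s)$, and under this identification the monodromy action of $\pi_1(\mathfrak{g}^{\mathrm{rs}},s)$ on $H^*_{G_s}(s) \cong H^*_T(\mathrm{pt})$ corresponds to the monodromy action of $\pi_1(\mathfrak{t}^{\mathrm{r}}/W)$ on the fibers of $\bar{\mathcal{L}}^k$. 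In particular the kernel $\pi_1(\mathfrak{t}^{\mathrm{r}}, s)$ acts trivially: a loop in $\mathfrak{t}^{\mathrm{r}}$ lifts to a loop in $\mathfrak{t}^{\mathrm{r}} \times_{N_K(S)} V$ that does not permute the $W$-cover, so parallel transport around it is the identity on $H^*(EG/S)$. This shows the action factors through \eqref{factors}, i.e. through $W$. It then remains to check that the residual $W$-action is the standard one: a loop in $\mathfrak{t}^{\mathrm{r}}/W$ whose image under $(\bullet)$ is $w$ corresponds to a path in $\mathfrak{t}^{\mathrm{r}}$ from $s$ to $ws$, and since $N_K(S)/S \cong W$ acts on the fibration $\mathfrak{t}^{\mathrm{r}} \times_S V \to \mathfrak{t}^{\mathrm{r}}$ covering its action on the base, monodromy along such a loop is precisely the action of $w$ on $H^*(EG/S) = H^*_S(\mathrm{pt}) \cong \mathbb{C}[\mathfrak{t}]$ induced by $w$ acting on $\mathfrak{t}$ — which is the $W$-action of Remark \ref{weyl action}(1).

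I expect the main obstacle to be the bookkeeping needed to make the identification in the last step precise: one must carefully match the isomorphism $\pi_1(\mathfrak{g}^{\mathrm{rs}},s) \cong \pi_1(\mathfrak{t}^{\mathrm{r}}/W,\bar s)$ from Remark \ref{same pi-1-rk} with the compatible action of $N_K(S)$ on the Borel construction, checking that the deck-transformation action on $\mathfrak{t}^{\mathrm{r}} \times_{N_K(S)} V$ (restricted to the fiber $EG/S$) really does induce the Weyl-group action on $H^*_S(\mathrm{pt})$ via its linear action on $\mathfrak{t}$, rather than some twist of it. This is essentially the statement that the isomorphism $H^*_T(\mathrm{pt}) \cong \mathbb{C}[\mathfrak{t}]$ is $W$-equivariant for the natural actions on both sides, which is recorded in Remark \ref{weyl action}(1); the work is in tracking that this equivariance is the one implemented by the monodromy. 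Everything else — the computation of the fibers, the factoring through $W$, the triviality of the kernel — is a direct unwinding of the constructions in the preceding parts of the appendix.
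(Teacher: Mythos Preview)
Your proposal is correct and follows essentially the same approach as the paper's proof: reduce via Remark \ref{same pi-1-rk} to the monodromy of $\bar{\mathcal{L}}^k$ on $\mathfrak{t}^{\mathrm{r}}/W$, observe that the fibration $\Phi:\mathfrak{t}^{\mathrm{r}}\times_S V\to\mathfrak{t}^{\mathrm{r}}$ is trivial (since $S$ acts trivially on $\mathfrak{t}$) so that $\pi_1(\mathfrak{t}^{\mathrm{r}})$ acts trivially and the action factors through $W$, and then identify the residual $W$-action on the fiber $V/S$ with the one in Remark \ref{weyl action}(1). The paper's proof is simply a terser version of what you wrote, and the ``bookkeeping obstacle'' you anticipate is handled there in one line by noting that the $W$-action on $H^k(\bar\Phi^{-1}(\bar s))$ is induced by the natural $W$-action on $\Phi^{-1}(s)\cong V/S$.
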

\begin{proof}
In view of Remark \ref{same pi-1-rk}, it is sufficient to prove the statement for the monodromy action of $\pi_1(\mathfrak{t}^{\text{r}}/W,\bar{s})$ on the fiber of the local system $\bar{\mathcal{L}}^k$. Consider diagram \eqref{torus diagram} in this case:
\begin{equation*}
\begin{tikzcd}[row sep=huge]
\mathfrak{t}^{\text{r}}\times^{S}V	\arrow{r}\arrow{d}{\Phi} 				&\mathfrak{t}^{\text{r}}\times^{N_K(S)}V	\arrow{d}{\bar{\Phi}}\\
\mathfrak{t}^{\text{r}}			\arrow{r}							&\mathfrak{t}^{\text{r}}/W.
\end{tikzcd}
\end{equation*}
Since the maximal compact torus $S$ acts trivially on $\mathfrak{t}$, the left vertical arrow is a trivial fibration. It follows that the action of the monodromy factors through ($\bullet$). 

The resulting action of $W$ on $H^k(\bar{\Phi}^{-1}(\bar{s}))$ comes from the natural $W$-action on the fiber
\[\Phi^{-1}(s)=\{s\}\times^S V\cong V/S.\]
It follows that the monodromy action is precisely the action defined in Remark \ref{weyl action}(1).
\end{proof}

\begin{corollary}
\label{factorscor}
Let $X=\tilde{\mathfrak{g}}^{\text{\emph{rs}}}$, let $\varphi=\mu_{\mathfrak{b}}$ be the Grothendieck--Springer morphism, and let $s\in\mathfrak{t}$ be a regular element. The monodromy action of $\pi_1(\mathfrak{g}^{\text{\emph{rs}}},s)$ on $H_{T}^*(\mathcal{B}_s)$ factors through \eqref{factors} and agrees with the $W$-action defined in Remark \ref{weyl action}.
\end{corollary}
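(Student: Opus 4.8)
The plan is to reduce the statement for $X=\tilde{\mathfrak{g}}^{\text{rs}}$ to the already-established case $X=\mathfrak{g}^{\text{rs}}$ of Proposition \ref{point}, using the K\"unneth decomposition of Remark \ref{weyl action}(2) and the compatibility of the monodromy construction with the fibration structure. In view of Remark \ref{same pi-1-rk}, it again suffices to analyze the monodromy action of $\pi_1(\mathfrak{t}^{\text{r}}/W,\bar{s})$ on the fiber of the local system $\bar{\mathcal{L}}^k$ from \eqref{ellbar}, so I would write down diagram \eqref{torus diagram} in the case $X_{\mathfrak{t}} = \tilde{\mathfrak{g}}^{\text{r}}|_{\mathfrak{t}^{\text{r}}} = G\times_B\mathfrak{b}^{\text{r}}$ restricted over $\mathfrak{t}^{\text{r}}$, with the horizontal map to $\mathfrak{t}^{\text{r}}$ given by $\mu_{\mathfrak{b}}$.

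The key observation is that over $\mathfrak{t}^{\text{r}}$ the Grothendieck--Springer fibration is trivial in a $W$-equivariant way: for $s\in\mathfrak{t}^{\text{r}}$ the fiber $\mathcal{B}_s=\mu_{\mathfrak{b}}^{-1}(s)$ is the finite set $W/1 \cong W$ of Borel subalgebras containing $s$ (it is the $T$-fixed locus, a discrete set of $|W|$ points permuted simply transitively by $W$), and this identification is locally constant in $s$. Consequently the fibration $X_{\mathfrak{t}}\times_S V \to \mathfrak{t}^{\text{r}}$ splits, up to the $W$-permutation of sheets, as the disjoint union over $\mathcal{B}_s$ of copies of the fibration $\mathfrak{t}^{\text{r}}\times_S V\to\mathfrak{t}^{\text{r}}$ appearing in the proof of Proposition \ref{point}. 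Passing to the quotient by $N_K(S)$ and taking $R^k$ of the pushforward, one gets $\bar{\mathcal{L}}^k$ as the $W$-equivariant pushforward along the factor $\mathcal{B}_{\mathfrak{t}^{\text{r}}}\to\mathfrak{t}^{\text{r}}$ of the local systems studied before; concretely, its fiber is $H^0(\mathcal{B}_s)\otimes H^k(V/S)$, matching the K\"unneth splitting $H^*_T(\mathcal{B}_s)\cong H^0(\mathcal{B}_s)\otimes H^*_T(\pt)$. The monodromy of $\pi_1(\mathfrak{t}^{\text{r}}/W,\bar{s})$ then factors through $(\bullet)$ because each tensor factor does: the $H^*_T(\pt)$-factor by Proposition \ref{point}, and the $H^0(\mathcal{B}_s)$-factor because the sheeted cover $\mathcal{B}_{\mathfrak{t}^{\text{r}}}\to\mathfrak{t}^{\text{r}}\to\mathfrak{t}^{\text{r}}/W$ is, after base change along $\mathfrak{t}^{\text{r}}\to\mathfrak{t}^{\text{r}}/W$, the trivial $|W|$-sheeted cover, so its monodromy is pulled back from $\mathfrak{t}^{\text{r}}/W$ with $W$ acting by the regular permutation representation on $H^0(\mathcal{B}_s)$. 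Since $(\bullet)$ is identified with \eqref{factors} under Remark \ref{same pi-1-rk}, the action factors through \eqref{factors}, and the resulting $W$-action is the diagonal of the permutation action on $H^0(\mathcal{B}_s)$ and the standard action on $H^*_T(\pt)\cong\mathbb{C}[\mathfrak{t}]$ --- which is precisely the action of Remark \ref{weyl action}(2).

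The main obstacle I anticipate is making the ``$W$-equivariant triviality of $\mu_{\mathfrak{b}}$ over $\mathfrak{t}^{\text{r}}$'' precise at the level of the fibrations $X_{\mathfrak{t}}\times_S V\to\mathfrak{t}^{\text{r}}$ and $X_{\mathfrak{t}}\times_{N_K(S)}V\to\mathfrak{t}^{\text{r}}/W$, rather than just on individual fibers: one must check that the section $\mathfrak{t}^{\text{r}}\to\tilde{\mathfrak{g}}^{\text{r}}$, $s\mapsto[1:s] = (\mathfrak{b},s)$, together with its $W$-translates, really does trivialize the cover, and that this trivialization is compatible with passing to the Borel--Moore/$S$-homotopy quotients and with the Cartesian square \eqref{torus diagram}. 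This is essentially the statement that the restriction of the Galois $W$-cover $\tilde{\mathfrak{g}}^{\text{rs}}\to\mathfrak{g}^{\text{rs}}$ along the finite map $\mathfrak{t}^{\text{r}}\to\mathfrak{g}^{\text{rs}}$ is the split cover $W\times\mathfrak{t}^{\text{r}}$, which follows from the fact that \eqref{rs-groth-springer} is Cartesian together with $\mathfrak{t}^{\text{r}}\times_{\mathfrak{t}^{\text{r}}/W}\mathfrak{t}^{\text{r}}\cong W\times\mathfrak{t}^{\text{r}}$; once this is in hand, the rest of the argument is a routine reduction to Proposition \ref{point} applied sheet-by-sheet and a bookkeeping of the diagonal $W$-action.
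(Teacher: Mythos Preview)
Your proposal is correct and follows essentially the same approach as the paper: reduce via Remark \ref{same pi-1-rk} to the torus diagram \eqref{torus diagram}, identify the fiber $\Phi^{-1}(s)\cong\mathcal{B}_s\times V/S$ with its diagonal $W$-action so that the K\"unneth isomorphism $H^*_S(\mathcal{B}_s)\cong H^0(\mathcal{B}_s)\otimes H^*_S(\pt)$ is $W$-equivariant, and then invoke Proposition \ref{point} for the second tensor factor. The paper's version is terser---it simply asserts that the monodromy action on the fiber is the diagonal one---whereas you spell out the $W$-equivariant triviality of $\mu_{\mathfrak{b}}$ over $\mathfrak{t}^{\text{r}}$ that justifies this assertion.
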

\begin{proof}
Again, consider diagram \eqref{torus diagram} in this case:
\begin{equation*}
\begin{tikzcd}[row sep=huge]
\tilde{\mathfrak{t}}^{\text{r}}\times^{S}V	\arrow{r}\arrow{d}{\Phi} 				&\tilde{\mathfrak{t}}^{\text{r}}\times^{N_K(S)}V	\arrow{d}{\bar{\Phi}}\\
\mathfrak{t}^{\text{r}}			\arrow{r}							&\mathfrak{t}^{\text{r}}/W.
\end{tikzcd}
\end{equation*}
The action of $W$ on $H^*(\bar{\Phi}^{-1}(\bar{s}))$ is induced by the action of $W$ on the fiber $\Phi^{-1}(s)=\mathcal{B}_s\times V/S$, which is the diagonal action. It follows that the K\"unneth isomorphism 
\[H^*_S(\mathcal{B}_s)=H^*(\mathcal{B}_s)\otimes H^*_S(s)\]
is $W$-equivariant. The statement now follows from Proposition \ref{point}.
\end{proof}

In the next two propositions we show that the monodromy action defined in Theorem \ref{actual action} is compatible with specializations and pullbacks.

\begin{proposition}
\label{equivariant edge}
Let $s\in\mathfrak{g}$ be a regular semisimple element. The specialization map 
\begin{equation}
\label{specmap}
H^*_{G_s}(X_s)\longrightarrow H^*(X_s)
\end{equation}
is equivariant with respect to the monodromy action of $\pi_1(\mathfrak{g}^{\text{\emph{rs}}},s)$.
\end{proposition}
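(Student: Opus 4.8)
The plan is to exhibit the specialization map \eqref{specmap} as the fiber at $s$ of a morphism of local systems on $\mathfrak{g}^{\mathrm{rs}}$; the desired equivariance is then automatic, since by Remark \ref{equivalence} a morphism of local systems is the same thing as a $\pi_1(\mathfrak{g}^{\mathrm{rs}},s)$-equivariant map between the fibers.

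First I would build such a morphism over $\mathfrak{t}^{\mathrm{r}}$, in the notation $K$, $S=K\cap T$, $V$, $\Phi\colon X_{\mathfrak{t}}\times_S V\to\mathfrak{t}^{\mathrm{r}}$ of the proof of Proposition \ref{torus equivariant}. Choosing a point $v_0\in V$ gives a map $\iota\colon X_{\mathfrak{t}}\to X_{\mathfrak{t}}\times_S V$, $x\mapsto[x,v_0]$, which commutes with the projections to $\mathfrak{t}^{\mathrm{r}}$; applying $R^k(-)_*$ it induces a morphism of local systems
\[
c^k\colon\ \mathcal{L}^k=R^k\Phi_*\underline{\mathbb{C}}_{X_{\mathfrak{t}}\times_S V}\longrightarrow R^k\varphi_*\underline{\mathbb{C}}_{X_{\mathfrak{t}}}
\]
on $\mathfrak{t}^{\mathrm{r}}$, whose fiber at $s$ is the map $H^k_S(X_s)=H^k(X_s\times_S V)\to H^k(X_s)$ induced by $\iota$. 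By the identifications $H^k_S(X_s)\cong H^k_T(X_s)\cong H^k_{G_s}(X_s)$ of Lemma \ref{real forms}, this fiber map is exactly the specialization map, and $c^k$ is tautologically $\pi_1(\mathfrak{t}^{\mathrm{r}},s)$-equivariant.

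The crux is to promote $c^k$ to a morphism over $\mathfrak{t}^{\mathrm{r}}/W$, equivalently over $\mathfrak{g}^{\mathrm{rs}}$ after pulling back along $\chi$. Both the source and the target of $c^k$ carry canonical $W$-equivariant structures from the action of $N_K(S)$: on $X_{\mathfrak{t}}\times_S V$ this action descends to a genuine $W=N_K(S)/S$-action, giving the descent datum that defines $\bar{\mathcal{L}}^k$; on $R^k\varphi_*\underline{\mathbb{C}}_{X_{\mathfrak{t}}}$ the $N_K(S)$-action factors through $W$ because the connected group $S$ acts trivially on the cohomology of each $X_s$, and the resulting structure is the restriction of the $G$-equivariant structure on $R^k\varphi_*\underline{\mathbb{C}}_X$. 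I would then check that $c^k$ intertwines these structures: conjugating $\iota$ by a lift $n$ of $w\in W$ amounts to replacing the basepoint $v_0$ by $n^{-1}v_0$, and since $V$ is path-connected the maps $x\mapsto[x,v_0]$ and $x\mapsto[x,n^{-1}v_0]$ are homotopic through maps over $\mathfrak{t}^{\mathrm{r}}$, hence induce the same morphism on derived pushforwards. Thus $c^k$ descends to a morphism $\bar{\mathcal{L}}^k\to\bar{\mathcal{N}}^k$ over $\mathfrak{t}^{\mathrm{r}}/W$, where $\bar{\mathcal{N}}^k$ is the descent of $R^k\varphi_*\underline{\mathbb{C}}_{X_{\mathfrak{t}}}$, and pulling back along $\chi$ gives a morphism $\chi^*\bar{\mathcal{L}}^k\to\chi^*\bar{\mathcal{N}}^k$ on $\mathfrak{g}^{\mathrm{rs}}$. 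By construction (see the proof of Theorem \ref{actual action} and Remark \ref{same pi-1-rk}) $\chi^*\bar{\mathcal{L}}^k$ is the local system whose monodromy is the action of $\pi_1(\mathfrak{g}^{\mathrm{rs}},s)$ on $H^k_{G_s}(X_s)$, while $\chi^*\bar{\mathcal{N}}^k\cong R^k\varphi_*\underline{\mathbb{C}}_X$ carries the monodromy action on $H^k(X_s)$; the fiber at $s$ of the morphism above is the specialization map \eqref{specmap}, which finishes the proof after assembling over $k$.

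I expect the main obstacle to be exactly this descent along the $W$-cover $\mathfrak{t}^{\mathrm{r}}\to\mathfrak{t}^{\mathrm{r}}/W$: the finite-dimensional approximation $V$ and the auxiliary basepoint $v_0$ are not canonical, and one must show that the specialization morphism is nonetheless $W$-equivariant, which is what the connectedness of $V$ (and of $S$) provides. A secondary bookkeeping point is the identification of $\chi^*\bar{\mathcal{N}}^k$ with $R^k\varphi_*\underline{\mathbb{C}}_X$ and of the $W$-equivariant structure on $R^k\varphi_*\underline{\mathbb{C}}_{X_{\mathfrak{t}}}$ with the restriction of the $G$-equivariant structure, but this follows routinely from the $G$-equivariance of $\varphi$ together with the fact that $\chi$ has connected, simply connected fibers.
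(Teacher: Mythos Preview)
Your argument is correct, and you are in fact more explicit than the paper about the crux you identify---the descent along $\mathfrak{t}^{\text{r}}\to\mathfrak{t}^{\text{r}}/W$.  The two proofs differ in how they model the specialization map as the stalk of a morphism of local systems on $\mathfrak{t}^{\text{r}}$.  You fix a basepoint $v_0\in V$ and use the section-like map $\iota:x\mapsto[x,v_0]$; the paper instead uses the quotient map $\alpha:X_{\mathfrak{t}}\times V\to X_{\mathfrak{t}}\times_S V$ and the unit $\underline{\mathbb{C}}\to\alpha_*\alpha^*\underline{\mathbb{C}}$, identifying the target $R^k(\Phi\circ\alpha)_*\underline{\mathbb{C}}_{X_{\mathfrak{t}}\times V}$ with $R^k\varphi_*\underline{\mathbb{C}}_{X_{\mathfrak{t}}}$ via the $n$-connectedness of $V$.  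The payoff of the paper's choice is that $\alpha$ is $N_K(S)$-equivariant on the nose, so the resulting morphism of local systems is canonically $W$-equivariant and the passage to $\mathfrak{t}^{\text{r}}/W$ (hence to $\mathfrak{g}^{\text{rs}}$ via $\chi$) requires no further argument---though the paper leaves this step implicit.  Your basepoint map is not equivariant, which forces the homotopy argument: conjugating $\iota$ by a lift $n$ of $w$ replaces $v_0$ by $n^{-1}v_0$, and path-connectedness of $V$ makes the two induced sheaf morphisms agree.  That argument is sound (a morphism of local systems is determined by its effect on any one fiber), so your proof goes through; it simply pays in bookkeeping what the paper's adjunction construction gets for free.
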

\begin{proof}
It is enough to check this for $s\in\mathfrak{t}^{\text{r}}$. We keep the notation used in the proof of Proposition \ref{torus equivariant}. There is a commutative diagram
\begin{equation*}
\begin{tikzcd}[row sep=huge]
X_{\mathfrak{t}}\times V		\arrow{r}{\alpha}\arrow{rd}[swap]{\varphi}		&X_{\mathfrak{t}}\times^{S}V	\arrow{d}{\Phi} \\																&\mathfrak{t}^{\text{r}},	
\end{tikzcd}
\end{equation*}
where $\alpha$ is the quotient by the diagonal action of $S$.

In degree $k$, the specialization map is given by the fibers of the adjunction
\[R^k\Phi_*\underline{\mathbb{C}}_{X_{\mathfrak{t}}\times^{S}V}\longrightarrow R^k\Phi_{*}\alpha_*\alpha^*\underline{\mathbb{C}}_{X_{\mathfrak{t}}\times^{S}V}=R^k\varphi_{*}\underline{\mathbb{C}}_{X_{\mathfrak{t}}\times V}.\]
Since it is induced by a morphism of local systems, \eqref{specmap} is monodromy-equivariant.
\end{proof}

Now let $Y$ be another smooth $G$-variety equipped with a smooth, $G$-equivariant, proper surjective morphism $\psi:Y\longrightarrow\mathfrak{g}^{\text{rs}}$. Once again write $Y_V=\psi^{-1}(V)$ for the restriction of $Y$ to a subset $V\subset\mathfrak{g}^{\text{rs}}$. Suppose that $f:X\longrightarrow Y$ is a smooth, $G$-equivariant morphism and that $\varphi=\psi\circ f.$

\begin{proposition}
\label{equivariant restriction}
Let $s\in\mathfrak{g}$ be a regular semisimple element. The pullback map
\begin{equation}
\label{restrmap}
H^*_{G_s}(Y_s)\longrightarrow H^*_{G_s}(X_s)
\end{equation}
is equivariant with respect to the monodromy action of $\pi_1(\mathfrak{g}^{\text{\emph{rs}}},s)$.
\end{proposition}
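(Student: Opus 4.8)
The plan is to mimic, essentially verbatim, the argument used in the proof of Proposition~\ref{equivariant edge}, replacing the adjunction morphism there by the pullback morphism of local systems induced by $f$. First I would reduce to the case $s \in \mathfrak{t}^{\text{r}}$, as in the previous proof, since every regular semisimple element is $G$-conjugate to a point of $\mathfrak{t}^{\text{r}}$ and the monodromy action is defined via the local systems $\mathcal{L}^k$ on $\mathfrak{t}^{\text{r}}$ (or their descents $\bar{\mathcal{L}}^k$). I would then keep the notation of the proof of Proposition~\ref{torus equivariant}: fix a maximal compact subgroup $K \subset G$ with $S = K \cap T$, and fix an $n$-connected compact manifold $V$ with a free $K$-action, with $n$ large enough to compute cohomology of both $X_s$ and $Y_s$ through degree $k$.

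Next I would set up the commutative diagram of smooth proper locally trivial fibrations over $\mathfrak{t}^{\text{r}}$:
\begin{equation*}
\begin{tikzcd}[row sep=huge]
X_{\mathfrak{t}} \times_S V \arrow{rr}{\bar{f}} \arrow{rd}[swap]{\Phi} & & Y_{\mathfrak{t}} \times_S V \arrow{ld}{\Psi} \\
 & \mathfrak{t}^{\text{r}}, &
\end{tikzcd}
\end{equation*}
where $\bar{f}$ is the map induced by $f$ (well-defined since $f$ is $G$-equivariant, hence $S$-equivariant) and $\Phi, \Psi$ are the proper fibrations coming from $\varphi = \psi \circ f$. Proper base change identifies the fibers of $R^k\Phi_*\underline{\mathbb{C}}$ and $R^k\Psi_*\underline{\mathbb{C}}$ with $H^k_S(X_s) = H^k_{G_s}(X_s)$ and $H^k_S(Y_s) = H^k_{G_s}(Y_s)$ respectively, via Lemma~\ref{real forms}. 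The point is that the pullback map \eqref{restrmap} is, in degree $k$, exactly the stalk at $s$ of the morphism of local systems
\[
R^k\Psi_*\underline{\mathbb{C}}_{Y_{\mathfrak{t}} \times_S V} \longrightarrow R^k\Psi_*\bar{f}_*\bar{f}^*\underline{\mathbb{C}}_{Y_{\mathfrak{t}} \times_S V} = R^k\Phi_*\underline{\mathbb{C}}_{X_{\mathfrak{t}} \times_S V}
\]
induced by the adjunction unit $\underline{\mathbb{C}}_{Y_{\mathfrak{t}} \times_S V} \to \bar{f}_*\bar{f}^*\underline{\mathbb{C}}_{Y_{\mathfrak{t}} \times_S V}$, together with $\Psi \circ \bar{f} = \Phi$. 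Since monodromy is functorial for morphisms of local systems, this immediately gives the $\pi_1(\mathfrak{t}^{\text{r}}, s)$-equivariance, and then the $\pi_1(\mathfrak{g}^{\text{rs}}, s)$-equivariance follows after descending along the Galois cover $\mathfrak{t}^{\text{r}} \to \mathfrak{t}^{\text{r}}/W$ and invoking Remark~\ref{same pi-1-rk}, exactly as in Theorem~\ref{actual action}.

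The only genuine subtlety — and the step I would be most careful about — is checking that the degree-$k$ pullback map $H^k_{G_s}(Y_s) \to H^k_{G_s}(X_s)$ really does agree with the stalk of the induced morphism of derived pushforwards, i.e.\ that the identification of equivariant cohomology with stalks of $R^k\Phi_*$, $R^k\Psi_*$ is natural in the space. This is a compatibility of proper base change with pullback along $\bar{f}$, which is standard but worth stating; it uses that $f$ (hence $\bar{f}$) restricts over $\{s\} \in \mathfrak{t}^{\text{r}}$ to the map $X_s \times_S V \to Y_s \times_S V$ computing ordinary cohomological pullback. With that naturality in hand the proof is essentially one line, so I would keep the writeup short: set up the diagram, quote proper base change and Lemma~\ref{real forms}, observe that \eqref{restrmap} is induced by a morphism of local systems via the adjunction, and conclude monodromy-equivariance, reducing from $\pi_1(\mathfrak{g}^{\text{rs}},s)$ to $\pi_1(\mathfrak{t}^{\text{r}},s)$ exactly as before.
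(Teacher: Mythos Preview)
Your proposal is correct and matches the paper's proof essentially verbatim: reduce to $s\in\mathfrak{t}^{\text{r}}$, set up the commutative triangle of fibrations $X_{\mathfrak{t}}\times_S V\to Y_{\mathfrak{t}}\times_S V\to\mathfrak{t}^{\text{r}}$, and identify \eqref{restrmap} with the stalk of the adjunction morphism $R^k\Psi_*\underline{\mathbb{C}}\to R^k\Psi_*f_*f^*\underline{\mathbb{C}}=R^k\Phi_*\underline{\mathbb{C}}$ of local systems. The paper's writeup is slightly terser than yours (it omits the explicit descent step via Remark~\ref{same pi-1-rk} and the naturality remark about proper base change), but the argument is the same.
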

\begin{proof}
Again it is sufficient to consider $s\in\mathfrak{t}^{\text{r}}$. We keep the notation of Proposition \ref{torus equivariant}. There is a commutative diagram
\begin{equation*}
\begin{tikzcd}[row sep=huge]
X_\mathfrak{t}\times^{S}V		\arrow{r}{f}\arrow{rd}[swap]{\Phi}		&Y_\mathfrak{t}\times^{S}V\arrow{d}{\Psi}\\
  														&\mathfrak{g}^{\text{rs}},
\end{tikzcd}
\end{equation*}
where the horizontal arrow is induced by $f$ and $\Psi$ is induced by $\psi$. In degree $k$, the pullback map is given by the fibers of the adjunction
\[R^k\Psi_*\underline{\mathbb{C}}_{Y_\mathfrak{t}\times^{S}V}
					\longrightarrow R^k\Psi_* f_*f^*\underline{\mathbb{C}}_{Y_\mathfrak{t}\times^{S}V}
						=R^k\Psi_* f_*\underline{\mathbb{C}}_{X_\mathfrak{t}\times^{S}V}
						=R^k\Phi_*\underline{\mathbb{C}}_{X_\mathfrak{t}\times^{S}V},\]
Since it is induced by a morphism of local systems, \eqref{restrmap} is monodromy-equivariant.
\end{proof}

We are now ready to prove Propositon \ref{monodromy factors}, which characterizes the monodromy action on the singular cohomology of a regular semisimple Hessenberg variety.

\begin{proof}[Proof of Proposition \ref{monodromy factors}]
As in Theorem \ref{actual action} of the appendix, there is a monodromy action of $\pi_1(\mathfrak{g}^{\text{rs}},s)$ on the $T$-equivariant cohomology of $\mathcal{B}_s=\mu^{-1}_\mathfrak{b}(s)$. By Corollary \ref{factorscor}, this action factors through \eqref{factors}.

By Remark \ref{fibers}, 
\[\Hess(s,H)^T=\mathcal{B}_s.\] 
Since $\Hess(s,H)$ is a GKM variety, Proposition \ref{gkm}(1) implies that the restriction map
\[H^*_T(\Hess(s,H))\longrightarrow H^*_T(\mathcal{B}_s)\]
is injective. It is also $\pi_1(\mathfrak{g}^{\text{rs}},s)$-equivariant by Proposition \ref{equivariant restriction}. It follows that the action of $\pi_1(\mathfrak{g}^{\text{rs}},s)$ on $H^*_T(\Hess(s,H))$ factors through \eqref{factors}. 

At the same time, the specialization map
\[H^*_T(\Hess(s,H))\longrightarrow H^*(\Hess(s,H))\]
is surjective by Proposition \ref{gkm}(2) and $\pi_1(\mathfrak{g}^{\text{rs}},s)$-equivariant by Proposition \ref{equivariant edge}. Therefore the action of $\pi_1(\mathfrak{g}^{\text{rs}},s)$ on $H^*(\Hess(s,H))$ also factors through \eqref{factors}.
\end{proof}

%
%
%
%
%
%
%
%
%
\bibliographystyle{plain}
\bibliography{hess}

\end{document}